\def\subsubsections#1{\subsubsection*{#1}}
\begin{document}
\frontmatter
\title[Fourier-Laplace transform II]{Fourier-Laplace transform of a~variation of polarized~complex Hodge structure, II}

\author[C.~Sabbah]{Claude Sabbah}
\address{UMR 7640 du CNRS\\
Centre de Math\'ematiques Laurent Schwartz\\
\'Ecole polytechnique\\
F--91128 Palaiseau cedex\\
France}
\email{sabbah@math.polytechnique.fr}
\urladdr{http://www.math.polytechnique.fr/~sabbah}

\begin{abstract}
We show that the limit, by rescaling, of the `new supersymmetric index' attached to the Fourier-Laplace transform of a polarized variation of Hodge structure on a punctured affine line is equal to the spectral polynomial attached to the same object. We also extend the definition by Deligne of a Hodge filtration on the de~Rham cohomology of a exponentially twisted polarized variation of complex Hodge structure and prove a $E_1$-degeneration property for it.
\end{abstract}

\subjclass{14D07, 32G20, 32S40}
\keywords{Flat bundle, variation of Hodge structure, polarization, twistor $\mathcal D$-module, Fourier-Laplace transform, supersymmetric index, spectral polynomial}
\thanks{Part of this work has been achieved during and shortly after the conference ``New developments in Algebraic Geometry, Integrable Systems and Mirror symmetry'', while the author was visiting RIMS, Kyoto. The author thanks this institution for hospitality and Professor M.-H.~Saito for the invitation and financial support.}
\maketitle
\enlargethispage{-\baselineskip}
\tableofcontents
\mainmatter

\section*{Introduction}
The purpose of this article, mainly concerned with exhibiting properties of the Fourier-Laplace transform of a variation of Hodge structure, is twofold.
\enlargethispage{\baselineskip}
\begin{enumerate}
\item\label{enum:intro1}
Let~$X$ be a compact Riemann surface, let~$S$ be a finite set of points on~$X$. We will denote by $j:U=X\moins S\hto X$ the inclusion. Let $f:X\to\PP^1$ be a meromorphic function on~$X$ which is holomorphic on~$U$ and let $(V,\Vnablaf)$ be a holomorphic bundle on~$U$ equipped with a holomorphic connection. We denote by $\ccM$ the locally free $\cO_X(*S)$-module of finite rank with a connection having regular singularities at each point of~$S$ and such that $\ccM_{|U}=(V,\Vnablaf)$ (Deligne's meromorphic extension of $(V,\Vnablaf)$). In \cite{Deligne8406}, P\ptbl Deligne defines a Hodge filtration on the de~Rham cohomology of the exponentially twisted connection $H^*_{\DR}(X,\ccM\otimes\ccE^f)$, \ie that of the meromorphic bundle $\ccM$ with the twisted connection $\Vnablaf+df\wedge$, at least when the monodromy of $(V,\Vnablaf)$ is unitary (and thus corresponds to a variation of polarized Hodge structure of type $(0,0)$).

This Hodge filtration is indexed by real numbers, and Deligne proves a $E_1$-degeneration property for the de~Rham complex. It has a good behaviour with respect to duality.

One is naturally led to the following questions:
\begin{itemize}
\item
In what sense do we get a Hodge filtration, \ie what are the underlying Hodge properties?
\item
Why are the jumps of this Hodge filtration related to the eigenvalues of the monodromy of $f$ around $f=\infty$ (more precisely, the spectrum of~$f$ at infinity relative to $(V,\Vnablaf)$)?
\item
Is there a possible extension of this construction without the unitarity assumption, when $(V,\Vnablaf)$ is only assumed to underlie a polarized variation of Hodge structure?
\end{itemize}

In \S\ref{sec:Delignefilt}, we extend the construction by Deligne of a filtration on the twisted de~Rham cohomology $H^*_{\DR}(X,\ccM\otimes\nobreak\ccE^f)$ when $(V,\Vnablaf)$ underlies a variation of polarized complex Hodge structure and give an answer to the previous questions. However, for simplicity, we restrict to the case where $X=\PP^1=\Afu\cup\{\infty\}$ and $f$ is the coordinate function on~$\Afu$.

\medskip
\item\label{enum:intro2}
Let $H$ be a complex vector space equipped with a positive definite Hermitian form $h$ (that we call a Hermitian metric) and two endomorphisms $\cU$ and $\cQ$, where $\cQ$ is selfadjoint with respect to $h$. The other purpose of this article is to give a relation between polynomials of degree $\dim H$ attached to this situation:
\begin{itemize}
\item
On the one hand, the characteristic polynomial of $\cQ$, denoted by $\Susy_{(H,h,\cU,\cQ)}(T)$.
\item
On the other hand, the spectral polynomials. The spectral polynomial at infinity, as defined in \S\ref{subsec:specinfty} below, is attached to the holomorphic bundle with a meromorphic connection having a pole of order two associated to $\cU$ and $\cQ$ (\cf \S\ref{subsec:inttwst}), and denoted by $\SP^\infty_{(H,h,\cU,\cQ)}(T)$. With a supplementary assumption called ``no ramification'', one can also define the spectral polynomial at the origin $\SP^0_{(H,h,\cU,\cQ)}(T)$ (\cf\S\ref{subsec:specorigin}).
\end{itemize}
There is a rescaling operator $\mu_\tau^*$, parametrized by $\tau\in\CC^*$, acting on the data $(H,h,\cU,\cQ)$ (more precisely and more accurately, on the associated integrable twistor structure, \cf Appendix~\ref{sec:appendiceB}).

The other main result of this article (Theorem \ref{th:conjHertling}) is to prove, under some conditions made explicit below (namely, $(H,h,\cU,\cQ)$ is the de~Rham cohomology of the exponential twist of a variation of polarized Hodge structure on a punctured line, in particular, the ``no ramification'' condition holds), a relation conjectured by C\ptbl Hertling:
\begin{equation}\label{eq:conjHertling}
\begin{split}
\lim_{\tau\to0}\Susy_{\mu_\tau^*(H,h,\cU,\cQ)}(T)&=\SP^\infty_{(H,h,\cU,\cQ)}(T),\\
\lim_{\tau\to\infty}\Susy_{\mu_\tau^*(H,h,\cU,\cQ)}(T)&=\SP^0_{(H,h,\cU,\cQ)}(T).
\end{split}
\end{equation}
A similar relation was first proved by C\ptbl Hertling (\cf \cite[Th\ptbl7.20]{Hertling01}) when the connection $\nabla$ has a regular singularity at $\hb=0$.
\end{enumerate}

The relation between the two approaches \eqref{enum:intro1} and \eqref{enum:intro2} above is made explicit in Remark \ref{rem:relations1-2} below. Both questions rely on a detailed analysis of the Fourier-Laplace transform of a variation of polarized complex Hodge structure on the punctured affine line.

\begin{remarque*}
In the recent preprint \cite{Mochizuki08b}, T\ptbl Mochizuki extends the limit theorems \ref{th:limSusytame} and \ref{th:limSusywild} in the higher dimensional case and gives applications to a characterization of nilpotent orbits.
\end{remarque*}

\subsubsections{Acknowledgements}
I thank Claus Hertling for useful discussions on this subject and for his comments on a preliminary version of this article. I thank the referee for his careful reading of the manuscript and useful comments.

\section{Connections with a pole of order two}
Let $\Omega$ be an open disc centered at the origin of $\CC$ with coordinate~$\hb$ and let~$\cH$ be a $\cO_\Omega$-locally free sheaf with a meromorphic connection~$\nabla$ having a pole of order two at the origin and no other pole (one can consider a more general situation, but we will restrict to this setting). We will moreover assume that the eigenvalues of the monodromy operator and of the formal monodromy operator have absolute value equal to one (so that the $V$-filtrations below are indexed by real numbers; here also, a more general situation could be considered, but we will restrict to this setting).

\subsection{Spectrum at infinity}\label{subsec:specinfty}
There exists a unique locally free $\cO_{\PP^1}(*\infty)$-module~$\wt\cH$ equipped with a meromorphic connection $\nabla$ with poles at~$0$ and~$\infty$ only, such that $\infty$ is a regular singularity, and which coincides with~$\cH$ when restricted to~$\Omega$ (it is called the \emph{meromorphic Deligne extension} of $(\cH,\nabla)$ at infinity, \cf\cite{Deligne70}). Let us denote by $\hb'=1/\hb$ the coordinate at infinity on $\PP^1$. For any $\gamma\in\RR$, the $\gamma$-Deligne extension of $\wt\cH$ at infinity is the locally free $\cO_{\PP^1}$-module $V^\gamma\wt\cH$ on which the connection has a logarithmic pole at infinity with residue having eigenvalues in $[\gamma,\gamma+1)$. According to the Birkhoff-Grothendieck theorem, $V^\gamma\wt\cH$ decomposes as the direct sum of rank-one locally free $\cO_{\PP^1}$-modules $V^\gamma\wt\cH=\cO_{\PP^1}(a_1)\oplus\cdots\oplus\cO_{\PP^1}(a_{\rk\cH})$ with $a_1\geq a_2\geq\cdots$. We denote by $v_\gamma$ the number of such line bundles which are $\geq0$ and by $\nu_\gamma$ the difference $v_\gamma-v_{>\gamma}$.

We can express these numbers a little differently. We have a natural morphism
\begin{equation}\label{eq:Vg}
\cO_{\PP^1}\otimes_\CC\Gamma(\PP^1,V^\gamma\wt\cH)\to V^\gamma\wt\cH
\end{equation}
whose image is denoted by $\wt\cV^\gamma$. This is a subbundle of $V^\gamma\wt\cH$ in the sense that $V^\gamma\wt\cH/\wt\cV^\gamma$ is also a locally free sheaf of $\cO_{\PP^1}$-modules; more precisely, fixing a Birkhoff-Grothendieck decomposition as above, we have $\wt\cV^\gamma=\bigoplus_{i\mid a_i\geq0}\cO_{\PP^1}(a_i)$ (indeed, for any line bundle $\cO_{\PP^1}(k)$, $\cO_{\PP^1}\otimes_\CC\Gamma(\PP^1,\cO_{\PP^1}(k))\to\cO_{\PP^1}(k)$ is onto if $k\geq0$ and~$0$ if $k<0$) so $\wt\cV^\gamma$ is a direct summand of $V^\gamma\wt\cH$ of rank $v_\gamma$. Restricting to~$\Omega$, we get a decreasing filtration $\cV^\cbbullet$ of~$\cH$ indexed by $\RR$. The graded pieces $\gr^\gamma_\cV\cH\defin\cV^\gamma/\cV^{>\gamma}$ are locally free $\cO_\Omega$-modules (being isomorphic to the kernel of $\cH/\cV^{>\gamma}\to\cH/\cV^\gamma$), and $\nu_\gamma=\rk\gr^\gamma_\cV\cH$.

Let us recall the definition of the spectral polynomial $\SP^\infty_\cH$ (\cf \cite{Bibi96bb} or \cite[\S III.2.b]{Bibi00}).

\begin{definition}[Spectrum at infinity]\label{def:spinfty}
The spectral polynomial of~$\cH$ at infinity is the polynomial $\SP^\infty_{\cH}(T)=\prod_\gamma(T-\gamma)^{\nu_\gamma}$ with (for any $\hbo\in\Omega$)
\[
\nu_\gamma=\rk\gr^\gamma_\cV\cH=\dim i^*_\hbo\gr^\gamma_\cV\cH=\dim\cV^\gamma/(\cV^{>\gamma}+(\hb-\hbo)\cV^\gamma).
\]
\end{definition}

In the following, we will often use an algebraic version of the previous construction, which is obtained as follows: set $G_0=\Gamma(\PP^1,\wt\cH)$, which is a free $\CC[\hb]$-module of finite rank; the (decreasing) Deligne $V$-filtration of $G\defin\CC[\hb,\hbm]\otimes_{\CC[\hb]}G_0$ at $\hb=\infty$ is a filtration $V^\cbbullet G$ of $G$ by $\CC[\hb']$-free submodules; in particular, $\hb' V^{\gamma}G=V^{\gamma+1}G$ and $\hb\partial_\hb+\gamma=-\hb'\partial_{\hb'}+\gamma$ is nilpotent on $\gr^\gamma_VG\defin V^{\gamma}G/V^{>\gamma}G$. Then, $\Gamma(\PP^1,V^\gamma\wt\cH)=V^{\gamma}G\cap G_0=:V^\gamma G_0$.

When tensored with $\cO_{\PP^1}(*\infty)$ and after taking global sections, \eqref{eq:Vg} is the inclusion morphism
\[
\CC[\hb]\cdot V^\gamma G_0\hto G_0.
\]
As $\CC[\hb]\cdot V^\gamma G_0$ is a direct summand in $G_0$, this inclusion induces an inclusion of fibres at~$\hbo$ for any $\hbo\in\CC$, and so
\begin{align*}
V^\gamma\big(G_0/(\hb-\hbo)G_0\big)&\defin V^\gamma G_0/[(\hb-\hbo)G_0\cap V^\gamma G]\\
&=\big[\CC[\hb]\cdot V^\gamma G_0+(\hb-\hbo)G_0\big]/(\hb-\hbo)G_0
\end{align*}
has dimension $v_\gamma$. Then we also have
\begin{equation}\label{eq:nugamma}
\begin{split}
\nu_\gamma(G_0)&=\dim\gr_V^\gamma(G_0/(\hb-\hbo)G_0)\\
&=\dim (G_0\cap V^\gamma G)\big/\big[(G_0\cap V^{>\gamma}G)+((\hb-\hbo)G_0\cap V^\gamma G)\big].
\end{split}
\end{equation}

\begin{exemple}\label{ex:cohomtame}
In \cite{Bibi96bb} (where an increasing version of the $V$-filtration is used, hence the change of sign below), this polynomial is denoted by $\SP_\psi(G,G_0)$. Correspondingly, the set of pairs $(-\gamma,\nu_\gamma)$ above is called the spectrum (at infinity) of $(G,G_0)$. When $G_0$ is the Brieskorn lattice attached to a cohomologically tame function on an affine smooth variety of dimension $n+1$ (\cf \loccit), the spectrum at infinity is symmetric with respect to $(n+1)/2$ and the numbers $-\gamma$ belong to $[0,n+1]\cap\QQ$.
\end{exemple}

\subsection{Spectrum at the origin}\label{subsec:specorigin}
We now make a supplementary assumption on $(\cH,\nabla)$. Let us denote by $\cH[1/\hb]$ the locally free $\cO_\Omega[1/\hb]$-module $\cO_\Omega[1/\hb]\otimes_{\cO_\Omega}\cH$, with its natural meromorphic connection. By the Levelt-Turrittin theorem, the associated formal module $\CC\lcr\hb\rcr[1/\hb]\otimes_{\cO_\Omega}\cH$ can be decomposed, after a suitable ramification of~$\hb$, as the direct sum of meromorphic connections which are tensor product of a rank-one irregular connection with a regular one. Here, we make the assumption that \emph{no ramification is needed to get the Levelt-Turrittin decomposition} (\cf Appendix~\ref{sec:appendiceB} for the need of such a condition). One can formulate this condition in terms of Laplace transforms, in the coordinate $\hb'\defin1/\hb$:

\begin{lemme}
The ``no ramification'' condition is equivalent to saying that the Laplace transform of the $\CC[\hb']\langle\partial_{\hb'}\rangle$-module $G$ associated with~$\cH$ has only regular singularities (included at infinity).
\end{lemme}

\begin{proof}[Sketch of proof]
This follows from the slope correspondence in the Fourier-Laplace transform (\cf \cite[Chap\ptbl V]{Malgrange91}). We will not distinguish between the Laplace transform and the inverse Laplace transform. Assume that $G$ is the Laplace transform of $M$. The formal part of $M$ at the origin produces the formal part of slope $<1$ of $G$ at $\hb'=\infty$. By assumption, only the slope~$0$ can appear, so $M$ is regular at the origin. A similar reasoning can be done at each singular point of $M$ by twisting by a suitable exponential term, showing that~$M$ has only regular singularities at finite distance. The part of slope $<1$ of~$M$ at infinity produces the formal part of $G$ at $\hb'=0$, and as~$G$ is regular at $\hb'=0$, only the slope~$0$ occurs as a slope $<1$ for $M$ at infinity. Slopes equal to~$1$ for $M$ at infinity would produce singular points of $G$ at finite distance, and not equal to $\hb'=0$. There are none. Lastly, slopes $>1$ for $M$ at infinity would produce slopes $>1$ for $G$ at $\hb'=\infty$. There are none. Hence $M$ has to be regular (slope~$0$) at infinity.
\end{proof}

Let us set $\cH^\wedge=\CC\lcr\hb\rcr\otimes_{\cO_\Omega}\cH$. When the ``no ramification'' condition is fulfilled, the Levelt-Turrittin decomposition for $\cH^\wedge[1/\hb]$ already exists for~$\cH^\wedge$. There exists then a finite number of pairwise distinct complex numbers $c_i$ ($i\in\nobreak I$) and a finite number of free $\CC\{\hb\}$-modules~$\cH_i$ with a regular meromorphic connection having a pole of order at most two, such that
\begin{equation}\label{eq:Levelt-Turrittin}
\cH^\wedge\simeq\tbigoplus_{i\in I}(\cH_i\otimes\cE^{c_i/\hb})^\wedge,
\end{equation}
where $\cE^{c_i/\hb}$ is $\CC\{\hb\}$ equipped with the connection $d-c_id\hb/\hb^2$. Each~$\cH_i$ is equipped with a regular meromorphic connection $\nabla$. The free $\CC\{\hb\}[1/\hb]$-module $\cH_i[1/\hb]$ has a canonical decreasing Deligne filtration $V^\bbullet\cH_i[1/\hb]$ indexed by real numbers (by our assumption) so that $\hb\partial_\hb-\gamma$ is nilpotent on the vector space $\gr^\gamma_V\cH_i[1/\hb]$.

\begin{definition}[Spectrum at the origin]\label{def:sporigin}
For any $i\in I$, the spectral polynomial of the regular meromorphic connection~$\cH_i$ at the origin is the polynomial $\SP^0_{\cH_i}(T)=\prod_\gamma(T+\gamma)^{\mu_{i,\gamma}}$, with
\[
\mu_{i,\gamma}=\dim\frac{\cH_i\cap V^\gamma\cH_i[1/\hb]}{\cH_i\cap V^{>\gamma}\cH_i[1/\hb]+\hb\cH_i\cap V^\gamma\cH_i[1/\hb]},
\]
and we set $\SP^0_{\cH}(T)=\prod_i\SP^0_{\cH_i}(T)$.
\end{definition}
(The choice $T+\gamma$ is done in order to have similar formulas for $\SP^0$ and $\SP^\infty$.)

\begin{exemple}\label{ex:cohomtameloc}
When~$\cH$ is the analytization of the Brieskorn lattice $G_0$ of a cohomologically tame function on a smooth affine variety (\cf Example \ref{ex:cohomtame}), then $\cH_i\neq0$ only if $-c_i$ is a critical value of this function, and $\cH_i^\wedge$ is the formal (with respect to $\partial_t^{-1}=\hb$) local Brieskorn lattice at this critical value (this follows from \cite[Prop\ptbl V.3.6]{Bibi00} for instance). The set of pairs $(\gamma,\mu_{i,\gamma})$ is the spectrum at this critical value (with a shift by one with respect to the definition of \cite{S-S85}): it is symmetric with respect to $(n+1)/2$ and the numbers $\gamma$ belong to $(0,n+1)$. (See also \cite{S-S85}, \cite{MSaito89} and \cite[Chap\ptbl10--11]{Hertling00} and the references therein for detailed results in the case of a singularity germ.)
\end{exemple}

\begin{remarque}\label{rem:modifnabla}
Assume that $\nabla$ has a pole of order one on~$\cH$. Then $\SP^0_\cH$ is the characteristic polynomial of $-\Res\nabla$ (residue of the connection at $\hb=0$). We also have $\SP^\infty_\cH=\SP^0_\cH$ (\cf \eg \cite[Ex\ptbl III.2.6]{Bibi00}). We conclude that in an exact sequence of logarithmic connections, $\SP^\infty_\cH$ and $\SP^0_\cH$ behave multiplicatively.
\end{remarque}

\subsection{Connection with a pole of order two by Laplace transform}\label{subsec:Laplacetr}
Let us recall the notion of Laplace transform of a filtered $\Clt$-module (\cf \cite[\S V.2.c]{Bibi00} or \cite[\S1.d]{Bibi05}. Let $\Afu$ be the affine line with coordinate~$t$ and let $M$ be a holonomic $\Clt$-module. We set $G\defin M[\ptm]=\CC[t]\langle\partial_t,\ptm\rangle\otimes_{\Clt}M$ (it is known that $G$ is also holonomic as a $\Clt$-module) and we denote by $\muloc:M\to G$ the natural morphism (the kernel and cokernel of which are isomorphic to powers of $\CC[t]$ with its natural structure of left $\Clt$-module). For any lattice~$L$ of~$M$, \ie a $\CC[t]$-submodule of finite type such that $M=\CC[\partial_t]\cdot L$, we set
\begin{equation}\label{eq:satdtm}
G_0^{(L)}=\sum_{j\geq0}\partial_t^{-j}\muloc(L).
\end{equation}
This is a $\CC[\ptm]$-submodule of $G$. Moreover, because of the relation $[t,\ptm]=(\ptm)^2$, it is naturally equipped with an action of $\CC[t]$. If $M$ has a regular singularity at infinity, then $G_0^{(L)}$ has finite type over $\CC[\ptm]$ (\cf \cite[Th\ptbl V.2.7]{Bibi00}). We have $G=\CC[\partial_t]\cdot G_0^{(L)}$.

Let us now assume that $M$ is equipped with a good filtration $F_\bbullet M$. In the following, in order to keep the correspondence with Hodge theory, we will work with decreasing filtrations $F^\cbbullet M$, the correspondence being given by $F^pM\defin F_{-p}M$. Let $p_0\in\ZZ$. We say that $F^\cbbullet M$ \emph{is generated by $F^{p_0}M$} if, for any $\ell\geq0$, we have $F^{p_0-\ell}M=F^{p_0}M+\cdots+\partial_t^\ell F^{p_0}M$. The $\CC[\ptm]$-module $\partial_t^{p_0}G_0^{(F^{p_0})}$ does not depend on the choice of the index~$p_0$, provided that the generating assumption is satisfied (\cf \cite[\S1.d]{Bibi05}). We thus define the \emph{Brieskorn lattice of the filtration $F^\cbbullet M$} as
\begin{equation}\label{eq:defG0F}
G_0^{(F)}=\partial_t^{p_0}G_0^{(F^{p_0})}\quad\text{for some (or any) index $p_0$ of generation.}
\end{equation}
If we also set $\hb=\partial_t^{-1}$, then $G_0^{(F)}$ is a free $\CC[\hb]$-module which satisfies $G=\CC[\hb,\hbm]\otimes_{\CC[\hb]}G_0^{(F)}$ and which is stable by the action of $\hb^2\partial_\hb\defin t$.

For any~$p$, we have
\begin{equation}\label{eq:muloc}
\muloc(F^pM)\subset \hb^pG_0^{(F)}.
\end{equation}
Indeed, $\hb^pG_0^{(F)}=\sum_{j\geq0}\partial_t^{-j-p+p_0}\muloc(F^{p_0}M)$; if $p\geq p_0$, we have $\partial_t^{p-p_0}F^pM\subset F^{p_0}M$, hence the desired inclusion after applying $\muloc$; if $p\leq p_0$, we have $F^pM=F^{p_0}+\cdots+\partial_t^{p_0-p}F^{p_0}M$, and the result is clear.

\subsection{Integrable twistor structures}\label{subsec:inttwst}
Let $H$ be a finite dimensional complex vector space equipped with a Hermitian metric $h$ and of two endomorphisms $\cU$ and $\cQ$, with $\cQ$ being selfadjoint with respect to $h$. Let $\cU^\dag$ be the $h$-adjoint of $\cU$. Let~$\Omega_0$ be an open neighbourhood of the closed disc $|\hb|\leq1$ in~$\CC$ and let us set $\cH'=\cO_{\Omega_0}\otimes_\CC H$, equipped with the meromorphic connection $\nabla=d+(\hb^{-2}\cU-\hb^{-1}\cQ-\cU^\dag)d\hb$.

We will denote by $\cT=(\cH',\cH',\cCS)$ the associated twistor structure (as defined in \S\ref{subsec:vartw} below, by taking~$X$ to be a point).

We will denote by $\SP^\infty_\cT(T)$ or by $\SP^\infty_{(H,h,\cU,\cQ)}(T)$ the spectral polynomial at infinity $\SP^\infty_{\cH'}(T)$. On the other hand, if $\nabla$ has no ramification at the origin, we will denote by $\SP^0_\cT(T)$ or by $\SP^0_{(H,h,\cU,\cQ)}(T)$ the spectral polynomial at the origin $\SP^0_{\cH'}(T)$.

\section{A review on integrable twistor $\cD$-modules}\label{se:revtw}
In this section and in Section \ref{sec:exptwFL}, we gather the notation and results needed for the proofs of the main theorems of this article. We refer to \cite{Bibi01c,Bibi05,Bibi05b} for details.

\subsection{Integrable harmonic Higgs bundles}\label{subsec:harmHiggs}
Let~$X$ be a complex manifold and let $E$ be a holomorphic bundle on~$X$, equipped with a Hermitian metric~$h$. For any operator~$P$ acting linearly on~$E$, we will denote by $P^\dag$ its adjoint with respect to~$h$.

Let $\theta$ be a holomorphic Higgs field on $E$, that is, an $\cO_X$-linear morphism $E\to\Omega^1_X\otimes_{\cO_X}E$ satisfying the ``integrability relation'' \hbox{$\theta\wedge\theta=0$}. We then say that $(E,\theta)$ is a Higgs bundle (\cf \cite{Simpson92}).

Let $E$ be a holomorphic bundle with a Hermitian metric $h$ and a holomorphic Higgs field $\theta$. Let $H$ be the associated $C^\infty$ bundle, so that $E=\ker d''$, let $D=D'+D''$, with $D''=d''$, be the Chern connection of~$h$. We say, after \cite{Simpson92}, that $(E,h,\theta)$ is a \emph{harmonic Higgs bundle} (or that~$h$ is \emph{Hermite-Einstein} with respect to $(E,\theta)$) if $\VD\defin D+\theta+\theta^\dag$ is an integrable connection on~$H$. The holomorphic bundle $V=\ker(d''+\theta^\dag)$ is then equipped with a flat holomorphic connection $\Vnablaf$, which is the restriction of $\VD'\defin D'+\theta$ to $V$.

We say (\cf \cite{Hertling01}, see also \cite[Chap\ptbl7]{Bibi01c}) that it is \emph{integrable} if there exist two endomorphisms $\cU$ and $\cQ$ of $H$ such that
\begin{align}
\text{$\cU$ is holomorphic, \ie }d''(\cU)&=0,\label{eq:higgsint1}\\
\cQ^\dag&=\cQ,\label{eq:higgsint2}\\
[\theta,\cU]&=0,\label{eq:higgsint3}\\
D'(\cU)-[\theta,\cQ]+\theta&=0,\label{eq:higgsint4}\\
D'(\cQ)+[\theta,\cU^\dag]&=0\label{eq:higgsint5}.
\end{align}

\begin{remarque}\label{rem:QHert}
Let us note that $(\cU+c\id,\cQ+\lambda\id)$ satisfy the same equations for any $c\in\CC$ and $\lambda\in\RR$. One way to fix $\lambda$ is to impose a compatibility condition with a given supplementary real structure. This would impose that $\cQ$ is purely imaginary (\cf \cite{Hertling01}). We then denote by $\cQ^\Hert$ this choice, which is the only one among the $\cQ+\lambda\id$, $\lambda\in\RR$, to be purely imaginary. With respect to the symmetric nondegenerate bilinear form deduced from the Hermitian metric and the real structure, $\cQ^\Hert$ is skewsymmetric, hence its characteristic polynomial satisfies $\Susy_{(H,h,\cU,\cQ^\Hert)}(-T)=(-1)^{\dim H}\Susy_{(H,h,\cU,\cQ^\Hert)}(T)$.
\end{remarque}

For any $x\in X$, the Hermitian vector space $(H_x,h_x)$ decomposes with respect to the eigenvalues of $\cQ_x$. However, these eigenvalues, which are real, may vary with $x$.

\begin{exemple}[Polarized variation of complex Hodge structure]\label{exem:PVCHS}
If $\cU=0$ (or $\cU=c\id$), then, according to \eqref{eq:higgsint5} and \eqref{eq:higgsint5}$^\dag$, $D(\cQ)=0$ and, working in a local $h$-orthonormal frame where $\cQ$ is diagonal, this implies that the eigenvalues of~$\cQ$ are constant. Let $H^p$ denote the eigen subbundle corresponding to the eigenvalue $p\in\RR$. Then $D'H^p\subset \Omega_X^1\otimes H^p$, $D''H^p\subset \Omega_{\ov X}^1\otimes H^p$ and \eqref{eq:higgsint4} implies $\theta H^p\subset\nobreak \Omega_X^1\otimes\nobreak H^{p-1}$. The decreasing filtration (indexed by $\RR$) defined by $F^p H=\bigoplus_{p'\geq p}H^{p'}$ is stable by~$\VD''$, hence induces a filtration $F^\cbbullet V$ of the holomorphic bundle $V\defin \ker \VD''$ by holomorphic subbundles, which satisfies $\Vnablaf F^p V\subset F^{p-1}V$. Moreover, if we choose a sign $\epsilon_p\in\{\pm1\}$ in such a way that, for any $p\in\RR$, $\epsilon_{p+1}=-\epsilon_p$, the nondegenerate sesquilinear form~$k$ defined by the properties that the decomposition $\bigoplus_{p\in\RR}H^p$ is $k$-orthogonal and $k_{|H^p}=\epsilon_p h_{|H^p}$, is $\VD$-flat. We thus recover the standard notion of a polarized variation of complex Hodge structure of weight~$0$, if we accept filtrations indexed by real numbers, and if we set $H^p=H^{p,-p}$.
\end{exemple}

\subsection{Variations of twistor structures}\label{subsec:vartw}
The notion of an integrable variation of twistor structures (and, more generally, that of an integrable twistor $\cD$-module) is a convenient way to handle integrable harmonic Higgs bundles. It was introduced in \cite{Simpson97}. The presentation given here follows \cite{Bibi01c}, and the reader can also refer to \cite[Chap\ptbl3]{Mochizuki07}.

\subsubsections{Notation}
If~$X$ is a complex manifold, $\ov X$ will denote the conjugate manifold (with structure sheaf $\ov{\cO_X}$), and $X_\RR$ will denote the underlying real-analytic or $C^\infty$-manifold. We will denote by $\PP^1$ the Riemann sphere, covered by the two affine charts $\simeq\Afu$ with coordinate~$\hb$ and $1/\hb$, and by $p:X\times\nobreak\PP^1\to\nobreak X$ the projection.

The coordinate~$\hb$ being fixed, we denote by $\bS$ the circle $|\hb|=1$, by~$\Omega_0$ an open neighbourhood of the closed disc $\Delta_0\defin\{|\hb|\leq1\}$ and by~$\Omega_\infty$ an open neighbourhood of the closed disc $\Delta_\infty\defin\{|\hb|\geq1\}$.

We will denote by $\sigma:\PP^1\to\ov{\PP^1}$ the anti-holomorphic involution $\hb\mto-1/\ov\hb$. We assume that $\Omega_\infty=\sigma(\Omega_0)$. We denote by $\iota:\PP^1\to\PP^1$ the holomorphic involution $\hb\mto-\hb$.

It will be convenient to use the notation $\cX$ for $X\times\nobreak\Omega_0$ and $\ovv\cX$ for $\ov X\times\nobreak\Omega_\infty$. Let us introduce the notion of twistor conjugation. Let~$\cH''$ be a holomorphic vector bundle on $\cX$. Then $\ov{\cH''}$ is a holomorphic bundle on the conjugate manifold $\ov\cX\defin\ov X\times\nobreak\ov{\Omega_0}$ and $\sigma^*\ov{\cH''}$ is a holomorphic bundle on $\ovv\cX=\ov X\times\nobreak\Omega_\infty$ (\ie is an anti-holomorphic family of holomorphic bundles on $\Omega_\infty$). We will set $\ovv{\cH''}\defin\sigma^*\ov{\cH''}$.

By a $C^\infty$ family of holomorphic vector bundles on $\PP^1$ parametrized by $X_\RR$ we will mean the data of a triple $(\cH',\cH'',\cCS)$ consisting of holomorphic vector bundle $\cH',\cH''$ on $X\times\nobreak\Omega_0$ and a nondegenerate $\cO_{X\times\bS}\otimes_{\cO_\bS}\cO_{\ov X\times\bS}$-linear morphism
\[
\cCS:\cHS'\otimes_{\cO_\bS}\ovv{\cHS''}\to\cC^{\infty,\an}_{X_\RR\times\bS},
\]
where $\cC^{\infty,\an}_{X_\RR\times\bS}$ is the sheaf of $C^\infty$ functions on $X_\RR\times\nobreak\bS$ which are real analytic with respect to $\hb\in\bS$. The nondegeneracy condition means that $\cCS$ defines a $C^{\infty,\an}$-gluing between the dual $\cH^{\prime\vee}$ of~$\cH'$ and $\ovv{\cH''}$, giving rise to a $\cC^{\infty,\an}_{X_\RR\times\PP^1}$-locally free sheaf of finite rank that we denote by $\wt\cH$.

\subsubsections{Variations of twistor structures}
By a $C^\infty$ variation of twistor structure on~$X$ we mean the data of a triple $(\cH',\cH'',\cCS)$ defining a $C^\infty$ family of holomorphic bundles on $\PP^1$ as above, such that each of the holomorphic bundles $\cH',\cH''$ is equipped with a \emph{relative} holomorphic connection
\begin{equation}\label{eq:nabla}
\nabla:\cH'('')\to\frac1\hb\,\Omega^1_{\cX/\Omega_0}\otimes_{\cO_{X\times\Omega_0}}\cH'('')
\end{equation}
which has a pole along $\hb=0$ and is integrable. Moreover, the pairing $\cCS$ has to be compatible (in the usual sense) with the connections, \ie
\[
d'\cCS(m',\ovv{m''})=\cCS(\nabla m',\ovv{m''})\quad\text{and}\quad d''\cCS(m',\ovv{m''})=\cCS(m',\ovv{\nabla m''}).
\]
Let us note that we can define $\ovv\nabla$ as
\[
\ovv\nabla:\ovv{\cH''}\to \hb\Omega^1_{\ovv\cX/\Omega_\infty}\otimes_{\cO_{\ovv\cX}}\ovv{\cH''}.
\]
If we regard $\cCS$ as a $C^{\infty,\an}$-linear isomorphism
\begin{equation}\label{eq:cC}
\cCS:\cC^{\infty,\an}_{X_\RR\times\bS}\otimes_{\cO_{\ov X\times\bS}}\ovv{\cHS''}\isom\cC^{\infty,\an}_{X_\RR\times\bS}\otimes_{\cO_{X\times\bS}}\cHS^{\prime\vee},
\end{equation}
the compatibility with $\nabla$ means that $\cCS$ is compatible with the connection $d'+\ovv\nabla$ on the left-hand side and $\nabla^\vee+d''$ on the right-hand side, where $d',d''$ are the standard differentials with respect to~$X$ only.
\begin{itemize}
\item
The adjoint $(\cH',\cH'',\cCS)^\dag$ is defined as $(\cH'',\cH',\cCS^\dag)$, with
\[
\cCS^\dag(m'',\ovv{m'})\defin\ovv{\cCS(m',\ovv{m''})}.
\]
With respect to \eqref{eq:cC}, we can write $\cCS^\dag=\ovv\cCS{}^\vee$.
\item
If $k\in\hZZ$, the Tate twist $(k)$ is defined by $\cT(k)\defin(\cH',\cH'',(i\hb^{-2k})\cCS)$.
\end{itemize}

We say that the variation is
\begin{itemize}
\item
\emph{Hermitian} if $\cH''=\cH'$ and $\cCS$ is ``Hermitian'', \ie $\cCS^\dag=\cCS$,
\item
\emph{pure of weight~$0$} if the restriction to each $x\in X$ defines a \emph{trivial} holomorphic bundle on $\PP^1$,
\item
\emph{polarized and pure of weight~$0$} if it is pure of weight~$0$, Hermitian, and the Hermitian form on the bundle $\ov H\defin p_*\wt\cH$ is positive definite, \ie is a Hermitian metric.
\end{itemize}

\begin{lemme}[C\ptbl Simpson \cite{Simpson97}]\label{lem:simpson}
We have an equivalence between variations of polarized pure twistor structures of weight~$0$ and harmonic Higgs bundles, by taking $\PP^1$-global sections.\qed
\end{lemme}

\begin{remarque}
There is a natural notion of a (polarized) variation of twistor structure of weight~$w\in\ZZ$ (\cf \cite{Simpson97}). We say that $\cT=(\cH',\cH'',\cCS)$ is pure of weight~$w$ if the restriction to each $x\in X$ defines a bundle isomorphic to $\cO_{\PP^1}(w)^d$. A Hermitian duality is an isomorphism $\cS=(S',S''):\cT\to\cT^\dag(-w)$. We say that~$\cS$ is a polarization if the Tate twisted object $(\cT,\cS)(w/2)$ is polarized (\cf \cite{Bibi01c} for details, see also \cite{Mochizuki07}).
\end{remarque}

\subsection{Integrable variations of twistor structures}\label{subsec:intvartw}
A variation of twistor structure $(\cH',\cH'',\cCS)$ is \emph{integrable} if the relative connection $\nabla$ on $\cH',\cH''$ comes from an absolute connection also denoted by $\nabla$, which has Poincar\'e rank one (\cf \cite[Chap\ptbl7]{Bibi01c}). In other words, $\hb\nabla$ should be an integrable meromorphic~$\hb$-connection on $\cH',\cH''$ with a logarithmic pole along $\hb=0$. We also ask for a supplementary compatibility property of the absolute connection with the pairing in the following way:
\[
\hb\frac{\partial}{\partial\hb}\cCS(m',\ovv{m''})=\cCS(\hb\nabla_{\partial_\hb}m',\ovv{m''})-\cCS(m',\ovv{\hb\nabla_{\partial_\hb}m''}).
\]
[Here, we regard $\cC^{\infty,\an}_{X_\RR\times\bS}$ as the sheaf of germs along $X\times\nobreak\bS$ of $C^\infty$ functions which are holomorphic with respect to~$\hb$; when considering it as the sheaf of $C^\infty$ functions which are real analytic with respect to $\hb\in\bS$, one should replace the operator $\hb\frac{\partial}{\partial\hb}$ with $\hb\frac{\partial}{\partial\hb}-\ov\hb\frac{\partial}{\partial\ov\hb}$.]

Lemma \ref{lem:simpson} can be extended to integrable variations:

\begin{lemme}[C\ptbl Hertling \cite{Hertling01}, \cf also {\cite[Cor\ptbl7.2.6]{Bibi01c}}]\label{lem:hertling}
The equivalence of Lemma \ref{lem:simpson} specializes to an equivalence between integrable variations of pure polarized twistor structures of weight~$0$ and integrable harmonic Higgs bundles, \ie harmonic Higgs bundles equipped with endomorphisms $\cU,\cQ$ satisfying \eqref{eq:higgsint1}--\eqref{eq:higgsint5}.\qed
\end{lemme}

Let us indicate one direction of the correspondence. Starting from $(H,h,\theta,\cU,\cQ)$, we construct an integrable variation of Hermitian twistor structures $(\cH',\cH'',\cCS)$ by setting $\cH=p^*H$ on $\cX$, with the $d''$-operator $D''_\cH\defin D''+\hb\theta^\dag$ and we set $\cH'=\ker D''_\cH=\cH''$. The relative connection \eqref{eq:nabla} is defined as the restriction of $D'_\cH\defin D'+\hbm\theta$ to~$\cH'$. The absolute connection is obtained by adding to the relative connection $\nabla$ the connection in the~$\hb$-variable $d'_\hb+(\hb^{-2}\cU-\hbm\cQ-\cU^\dag)d\hb$ (\cf \cite{Hertling01} or \cite[\S7.2.c]{Bibi01c} for more details).

\begin{remarque}
Given an integrable variation of twistor structure $\cT=(\cH',\cH'',\cCS)$, we will say that the structure obtained by changing the action of $\hb^2\nabla_{\partial_\hb}$ on~$\cH'$ to $\hb^2\nabla_{\partial_\hb}-\nobreak\lambda\hb$ and that on~$\cH''$ to $\hb^2\nabla_{\partial_\hb}-\ov\lambda\hb$ ($\lambda\in\CC$) is \emph{equivalent} to the previous one. If the variation of twistor structure is Hermitian, the equivalent structure is still compatible with~$\cS$ iff $\lambda\in\RR$.
\end{remarque}

\begin{remarque}[Tate twist and integrability]\label{rem:tateint}
The effect of the Tate twist $(k)$ (with $k\in\hZZ$) on the action of $\hb^2\partial_\hb$ is a shift between~$\cH'$ and~$\cH''$ by $2k\hb$. In this article, it will be convenient to choose a nonsymmetric shift, namely, the action on~$\cH''$ is unchanged, and that on~$\cH'$ is changed into $\hb^2\partial_\hb-2k\hb$.
\end{remarque}

\begin{remarque}\label{rem:U=0}
In the previous correspondence, we identify the bundle $E$ on~$X$ with the restriction $\cH''/\hb\cH''$. Then $\cU$ is induced by the action of $\hb^2\nabla_{\partial_\hb}$. If $\cU=0$ (the case of a variation of Hodge structure), then~$\cH''$ is stable by $\hb\nabla_{\partial_\hb}$, and the characteristic polynomial of $\cQ$ is equal to that of the restriction of $-\hb\nabla_{\partial_\hb}$ to $E$.
\end{remarque}

\subsection{The `new supersymmetric index'}\label{subsec:susy}
We assume in this paragraph that~$X$ is a point, so we work with twistor structures.

\begin{definition}[\cf \cite{C-F-I-V92} and \cite{Hertling01}]\label{def:Susy}
Let $\cT=(\cH',\cH',\cCS)$ be an integrable polarized pure twistor structure of weight~$0$ with polarization $\cS=(\id,\id)$. Let $\cU,\cQ$ be the associated endomorphisms of the corresponding finite dimensional complex vector space with positive definite Hermitian form. The endomorphism $\cQ$ is called the `new supersymmetric index' attached to $\cT$. We denote by $\Susy_\cT(T)$ its characteristic polynomial.
\end{definition}

It will be convenient to extend to any weight the previous definition.

\begin{definition}\label{def:Susyw}
Let $\cT=(\cH',\cH'',\cCS)$ be an integrable pure twistor structure of weight~$w$ with polarization~$\cS$. We set
\[
\Susy_\cT(T)\defin\Susy_{\cT(w/2)}(T).
\]
\end{definition}

Similarly, we define the spectral polynomials:

\begin{definition}
Let $\cT=(\cH',\cH'',\cCS)$ be an integrable twistor structure (not necessarily pure or polarized). We define (if the ``no ramification'' condition is fulfilled, for $\SP^0$):
\[
\SP^\infty_\cT(T)=\SP^\infty_{\cH''}(T)\quad\text{and}\quad\SP^0_\cT(T)=\SP^0_{\cH''}(T).
\]
\end{definition}

According to Definition \ref{def:Susyw} and to Remark \ref{rem:tateint}, we have, for any $k\in\hZZ$,
\begin{equation}\label{eq:tateSusySp}
\Susy_{\cT(k)}=\Susy_\cT,\quad\SP^\infty_{\cT(k)}=\SP^\infty_\cT,\quad\SP^0_{\cT(k)}=\SP^0_\cT.
\end{equation}

\subsection{Twistor $\cD$-modules}
In order to allow singularities in variations of twistor structure, we have introduced in \cite{Bibi01c} the notion of polarizable twistor $\cD$-module (see also \cite{Mochizuki07} for an extension of this notion with parabolic weights). We will briefly recall this notion.

We first introduce the sheaf $\cR_\cX$ of differential operators, locally isomorphic to $\cO_\cX\langle\partiall_{x_1},\dots,\partiall_{x_n}\rangle$, by setting $\partiall_{x_i}=\hb\partial_{x_i}$. A left $\cR_\cX$-module is nothing else but a $\cO_\cX$-module with a flat~$\hb$-connection.

The category $\RTriples(X)$ consists of triples $(\cM',\cM'',\cCS)$, where $\cM',\cM''$ are left $\cR_\cX$-modules and $\cCS:\cMS'\otimes_{\cOS}\ovv{\cM''}\to\DbhS{X}$ is a $\cR_{\cX|\bS}\otimes_{\cOS}\ovv{\cR_{\cX|\bS}}$ linear morphism with values in the sheaf $\DbhS{X}$ of distributions on $X\times\nobreak\bS$ which are continuous with respect to $\hb\in\bS$. There is a natural notion of morphism. This category has Tate twists by $\hZZ$, and a notion of adjunction (\cf \cite[\S1.6]{Bibi01c}). Restricting $\cM'$ or $\cM''$ to $\hb=1$ gives left $\cD_X$-modules, while restricting them to $\hb=0$ gives $\cO_X$-modules with a Higgs field.

There is a notion of direct image, hence of de~Rham cohomology when taking the direct image by the constant map.

Supplementary properties are introduced in order to define the notion of polarized twistor $\cD$-module of some weight. We will not recall them here and refer to \cite{Bibi01c} for further details.

\subsection{Specialization and integrability (the tame case)}\label{subsec:speinttame}
In the remaining part of Section \ref{se:revtw}, we assume that~$X$ is a disc with coordinate $x$ and we denote by $j:X^*\hto X$ the inclusion of the punctured disc $X\moins\{0\}$ in~$X$.

Let $\cT=(\cM',\cM'',\cCS)$ be a regular twistor $\cD$-module of weight~$w$ on~$X$ polarized by $\cS=(S'=(-1)^wS'',S'')$ (\cf \cite{Bibi01c}) with only singularity at $x=0$, so that $\cT_{|X^*}$ is a polarized variation of twistor structures of weight~$w$, which has a tame behaviour near the singularity, in the sense of \cite{Simpson90} (\cf \cite{Bibi01c,Mochizuki07}).

For any $\beta\in\CC$ with $\reel\beta\in(-1,0]$, the nearby cycle functor\footnote{In \cite{Bibi01c} it is defined with the increasing convention for the $V$-filtration. Here we use the decreasing one. The correspondence is $\Psi_x^\beta=\Psi_{x,\alpha}$ with $\beta=-\alpha-1$.} $\Psi_x^\beta$ sends such a triple $\cT$ to a triple $\Psi_x^\beta\cT\in\RTriples(\{0\})$, equipped with a morphism $\cN:\Psi_x^\beta\cT\in\RTriples(\{0\})\to\Psi_x^\beta\cT(-1)\in\RTriples(\{0\})$. If $\rM_\bbullet$ denotes the monodromy filtration, then the graded object $\gr_\bbullet^\rM\Psi_x^\beta\cT$, equipped with the morphism $\gr_{-2}^\rM\cN$, is a graded Lefschetz twistor $\cD$-module of weight~$w$ and type $\epsilon=-1$. Moreover, $\Psi_x^\beta\cS$ induces, by grading, a polarization of this object.

We have a similar result for vanishing cycles: $(\gr_\bbullet^\rM\phi_x^{-1}\cT,\gr_{-2}^\rM\cN)$ is an object of $\MLTr(X,w;-1)$ and $\phi_x^{-1}\cS$ induces, by grading, a polarization (this follows from \cite[Cor\ptbl4.1.17]{Bibi01c}).

Let us moreover assume that $(\cT,\cS)$ is integrable (\cf \cite[Chap\ptbl7]{Bibi01c}, where one should modify (7.1.2) by using the operator $\hb\partial/\partial\hb-\ov\hb\partial/\partial\ov\hb$ (standard conjugation) on the left-hand side, as (7.1.2) was mistakenly written there for distributions which are holomorphic with respect to~$\hb$), so that, in particular, the corresponding variation is integrable on~$X^*$. Then from \loccit we know that $\Psi_x^\beta\cM\neq0$ only if~$\beta$ is real, so that $\beta\star\hb=\beta\hb$ above, there is no difference between the notations $\Psi_x^\beta\cT$ and $\psi_x^\beta\cT$ of \loccit, and moreover, using the induced action of $\hb^2\partial_\hb$, $\Psi_x^\beta\cT$ remains integrable. Going then to $\gr_\ell^\rM\Psi_x^\beta\cT$, we get an integrable polarized twistor structure of weight~$w+\ell$ (\cf \cite[Lemma 7.3.8]{Bibi01c}). The action of $\hb^2\partial_\hb$ on $\gr_\ell^\rM\Psi_x^\beta\cM''$ is the action naturally induced by $\hb^2\partial_\hb$ on~$\cM''$. A similar result holds for $\gr_\ell^\rM\phi_x^{-1}\cT$.

\subsection{Specialization and integrability (the wild case)}\label{subsec:speintwild}
We keep the notation of \S\ref{subsec:speinttame}, but we will consider the more general case of a polarized wild twistor $\cD$-module $\cT=(\cM',\cM'',\cCS)$ of weight~$w$ with polarization~$\cS$, for which we refer to~\cite{Bibi06b,Mochizuki08}. We will also assume that the ``no ramification'' condition is fulfilled, that is, we assume that \cite[Prop\ptbl4.5.4]{Bibi06b} holds with ramification index $q$ equal to one. Therefore, setting $\cM=\cM'$ or $\cM''$, we have a formal decomposition
\[\tag{DEC$^\wedge$}
\wt\cM^\wedge\isom \tbigoplus_i(\wt\cR_i^\wedge\otimes\cE^{\varphi_i/\hb}),\qquad\varphi_j\in x^{-1}\CC[x^{-1}].
\]
Let us also notice that, when we restrict to $\hb=0$, the decomposition holds at the level of $\wt\cM/\hb\wt\cM$ (\cf \cite[Rem\ptbl4.5.5]{Bibi06b}).

For any $\varphi\in\xm\CC[\xm]$ and any $\beta\in\CC$ with $\reel\beta\in(-1,0]$, we set $\Psi_x^{\varphi,\beta}\wt\cM\defin\Psi_x^\beta(\wt\cM\otimes\cE^{-\varphi/\hb})$. We can then define the objects $\Psi_x^{\varphi,\beta}\wt\cT$, equipped with $\cN:\Psi_x^{\varphi,\beta}\wt\cT\to\Psi_x^{\varphi,\beta}\wt\cT(-1)$. The condition of being a polarized wild twistor $\cD$-module of weight~$w$ at $x=0$ means that, for all $\varphi,\beta$ as above, $(\gr_\bbullet^\rM\Psi_x^{\varphi,\beta}\wt\cT,\gr_{-2}^\rM\cN)$, equipped with the naturally induced sesquilinear duality, is a graded Lefschetz twistor structure of weight~$w$, in the sense of \cite[\S2.1.e]{Bibi01c}. As a consequence, if $\varphi=0$, the vanishing cycles $(\gr_\bbullet^\rM\phi_x^{0,-1}\wt\cT,\gr_{-2}^\rM\cN)$ are of the same kind.

For any $\varphi\in\xm\CC[\xm]$, the exponentially twisted $\cR_\cX[\xm]$-module $\wt\cM\otimes\cE^{-\varphi/\hb}$ remains integrable, if $\wt\cM$ is so, hence, according to \cite[Prop\ptbl7.3.1]{Bibi01c}, so are the modules $\Psi_x^{\varphi,\beta}\wt\cM$ ($\beta\in(-1,0]$) and $\phi_x^{0,-1}\wt\cM$. Moreover, the formal module $\wt\cM^\wedge$ is clearly integrable.

\begin{lemme}
Each $\cR_i^\wedge$ entering in the decomposition \textup{(DEC$^\wedge$)} is integrable.
\end{lemme}

\begin{proof}
Firstly, the irregular part $\wt\cM_\irr^\wedge$ of $\wt\cM^\wedge$ (\ie corresponding in (DEC$^\wedge$) to the sum over the nonzero $\varphi_i$) remains integrable, as, near any $\hbo\in\Omega_0$, it can be realized as the intersection $\bigcap_kV^k_{(\hbo)}\wt\cM^\wedge$, where $V^\cbbullet\wt\cM^\wedge$ is the $V$-filtration of $\wt\cM^\wedge$ (defined near~$\hbo$), and we know that each step of the $V$-filtration is integrable (\cite[Prop\ptbl7.4.1]{Bibi01c}).

Let $i_0\in I$ be such that $\varphi_{i_0}=0$. We claim that $\wt\cR^\wedge_{i_0}$ is integrable: because of the previous remark applied to $\wt\cM^\wedge\otimes\cE^{-\varphi_i/\hb}$ for $i\neq i_0$, $(\hb^2\partial_\hb+\varphi_i)\wt\cR^\wedge_{i_0}$---hence $\hb^2\partial_\hb\wt\cR^\wedge_{i_0}$---has no component on the regular part of $\wt\cM^\wedge\otimes\cE^{-\varphi_i/\hb}$, that is, on $\wt\cR^\wedge_i$; thus $\wt\cR^\wedge_{i_0}$ is stable by $\hb^2\partial_\hb$. The same result applies to any $\wt\cR^\wedge_i$, by globally twisting $\wt\cM^\wedge$ by $\cE^{-\varphi_i/\hb}$, hence the lemma.
\end{proof}

By assumption on $\wt\cM$, each $\wt\cR_i^\wedge$ is strictly specializable. Applying \cite[Lemma 7.3.7]{Bibi01c}, we find that $\Psi_x^{\varphi,\beta}\wt\cM\neq0\implique\beta\in\RR$.

\section{Specialization of the new supersymmetric index}
In this section,~$X$ denotes a disc with coordinate $x$ and~$j$ denotes the inclusion of the punctured disc $X^*\defin X\moins\{0\}$ into~$X$.

\subsection{The tame case}
In this subsection, we keep the setting of \S\ref{subsec:speinttame} and we assume that $(\cT,\cS)$ is integrable.

\begin{theoreme}\label{th:limSusytame}
We have the following correspondence between $\Susy$ polynomials:
\[\tag*{(\protect\ref{th:limSusytame})$(*)$}\label{eq:limSusytame}
\lim_{x\to0}\Susy_{\cT_x}(T)=\prod_{\beta\in(-1,0]}\prod_{\ell\geq0}\Susy_{\gr_\ell^\rM\Psi_x^\beta\cT}(T)
\]
\end{theoreme}

\begin{remarque}
If $\cT_{|X^*}$ consists of a polarized variation of Hodge structures of weight~$w$, then $\Susy_{\cT_x}(T)$ is constant (\cf Lemma \ref{lem:susyspHodge} below). In general, however, the eigenvalues of $\cQ_x$ do vary (see the example in \cite[(7.115)]{Hertling01} for instance).
\end{remarque}

\begin{proof}[Proof of Theorem \ref{th:limSusytame}]
For simplicity, we will assume $w=0$ (this can be obtained by a Tate twist $(w/2)$) and that $\cM'=\cM''$ and $\cS=(\id,\id)$. Let us fix $\beta\in(-1,0]$. By assumption, $(\gr_\bbullet^\rM\Psi_x^\beta\cT,\cN)$ is a graded Lefschetz twistor structure which is polarized and of weight~$0$ (\cf \cite[\S2.1.e]{Bibi01c}). It thus corresponds to a Hermitian vector space $H$ with a $\SL_2(\RR)$-action (\cf \cite[Rem\ptbl2.1.15]{Bibi01c}), hence a graded vector space $H=\bigoplus_\ell H_\ell$ with a nilpotent endomorphism of degree $-2$. We denote the standard action of the generators of $\sld(\RR)$ by $\rX,\rY,\rH$, so that $H_\ell$ is the eigenspace of $\rH$ for the eigenvalue~$\ell$. Then, for $\ell\geq0$, a basis $\bme_{\beta,\ell,\ell}^o$ of the primitive subspace $\rP H_\ell$ defines a global frame of $\rP^\beta_\ell\cM\defin \rP\gr_\bbullet^\rM\Psi_x^\beta\cM$, which is orthonormal for $\rP^\beta_\ell\cCS$ (the sesquilinear form of $\rP\gr_\bbullet^\rM\Psi_x^\beta\cT$) and in which the matrix of $\hb^2\partial_\hb$ takes the form $\pcU_{\beta,\ell}-\pcQ_{\beta,\ell}\hb-\pcU_{\beta,\ell}^\dag\hb^2$. We can assume that $\pcQ_{\beta,\ell}$ is diagonal, being selfadjoint with respect to the positive definite Hermitian form on $\rP H_\ell$.

The construction done in \cite[\S5.4.c]{Bibi01c} extends this family of frames first to a frame $\bme_{\beta,\ell,k}^o$ ($\ell\in\NN$, $k=0,\dots,\ell$) of $\gr_{\ell-2k}^\rM\Psi_x^\beta\cM$ and then to a local frame $\bme_\beta$ of $V^\beta\cM$ (the local construction near each~$\hbo$ done in \loccit is not needed here as the $V$-filtration is globally defined with respect to~$\hb$, \cf \cite[Rem\ptbl3.3.6(2)]{Bibi01c}).

The action of $\hb^2\partial_\hb$ leaves the $V$-filtration invariant (\cf \cite[Prop\ptbl7.3.1]{Bibi01c}), as well as the lift of the $\rM$\nobreakdash-filtration on each $V^\beta$ (\cf \cite[Lemma 7.3.8]{Bibi01c}). Therefore, the matrix~$B$ of $\hb^2\partial_\hb$ in the frame $\bme$, which is holomorphic, is ``triangular'' up to powers of~$x$ with respect to $\rM_\bbullet V^\cbbullet$, \ie can be written as
\begin{equation}\label{eq:B}
B=\tbigoplus_{\beta}\Big[B_{\beta,\beta,0}\oplus B_{\beta,\beta,\leq-1}\oplus\tbigoplus_{\beta'\neq\beta}B_{\beta',\beta}\Big]
\end{equation}
with $B_{\beta',\beta}/x$ holomorphic if $\beta'<\beta$, and where the index $j$ in $B_{\beta,\beta,j}$ denotes the weight with respect to $\rH$, so that $[\rH,B_{\beta,\beta,j}]=jB_{\beta,\beta,j}$. Moreover, the matrix $B_{\beta,\beta,0}$ can be written as $\cU_{\beta,\beta,0}-\cQ_{\beta,\beta,0}\hb-\cU_{\beta,\beta,0}^\dag\hb^2$, and is block-diagonal with respect to the previous decomposition $(\ell,k)$ of the frame~$\bme_\beta$, and the diagonal $(\ell,k)$-block of $\cQ_{\beta,\beta,0}$ is $\pcQ_{\beta,\ell}+(-k+\nobreak\ell/2)\id$. In particular, the characteristic polynomial of $\bigoplus_{\beta\in(-1,0]}\cQ_{\beta,\beta,0}$ is the right-hand side in \ref{eq:limSusytame}.

Let us denote by $A(x,\hb)$ the matrix $\bigoplus_{\beta\in(-1,0]} \mx^\beta\Lx^{\rH/2}$, with $\Lx\defin\vert\log\mx^2\vert$. By \cite[Lemma 5.4.7]{Bibi01c},\footnote{In \loccit, the matrix $A$ is multiplied by $e^{-\hb X}$; this is in fact not needed in the argument.} there exists on $X^*\times\nobreak\Omega_0$ (up to shrinking~$X$ and~$\Omega_0$, as defined in \S\ref{subsec:inttwst}) a matrix $S(x,\hb)$ with $\lim_{x\to0}S(x,\hb)=0$ uniformly with respect to~$\hb$, such that the frame
\[
\varepsilong\defin\bme\cdot A(x,\hb)^{-1}(\id+S(x,\hb))
\]
is an orthonormal frame for $\cCS$. The matrix of $\hb^2\partial_\hb$ in this frame will enable us to compute the left-hand side in the theorem.

This matrix is equal to
\[
(\id+S)^{-1}ABA^{-1}(\id+S)+(\id+S)^{-1}A\hb^2\partial_\hb[A^{-1}(\id+S)].
\]
The second term is a multiple of $\hb^2$ and will not contribute to $\cQ_x$.

Let us note that the block $(ABA^{-1})_{\beta',\beta}$ ($\beta'\neq\beta$) is equal to
\begin{equation}
\mx^{\beta'-\beta}\Lx^{\rH/2}B_{\beta',\beta}\Lx^{-\rH/2},
\end{equation}
and tends to~$0$ when $x\to0$ (since, when $\beta'<\beta$ and $\beta,\beta'\in(-1,0]$, $B_{\beta',\beta}/x$ is locally bounded and $1+\beta'-\beta>0$). Then so does its conjugate by $\id+S$.

A similar reasoning can be done for $A_\beta B_{\beta,\beta,\leq-1}A_\beta^{-1}$, which gives a decay at least like $\Lx^{-1/2}$ when $x\to0$.

Now, $A_\beta B_{\beta,\beta,0}A_\beta^{-1}=B_{\beta,\beta,0}$, and the coefficient of $-\hb$ is $\cQ_{\beta,\beta,0}$, which is thus equal to $\lim_{x\to0}\cQ_x$. This gives the conclusion.
\end{proof}

\subsection{The wild case}

We now consider the setting of \S\ref{subsec:speintwild}. We then have the following generalization of Theorem \ref{th:limSusytame}:

\begin{theoreme}\label{th:limSusywild}
Let $\cT=(\cM',\cM'',\cCS)$ be a wild twistor $\cD$-module of weight~$w$ polarized by~$\cS$, satisfying the ``no ramification'' condition. We have the following correspondence between $\Susy$ polynomials:
\[\tag*{(\protect\ref{th:limSusywild})$(*)$}\label{eq:limSusywild}
\lim_{x\to0}\Susy_{\cT_x}(T)=\prod_{\varphi\in\xm\CC[\xm]}\prod_{\beta\in(-1,0]}\prod_{\ell\geq0}\Susy_{\gr_\ell^\rM\Psi_x^{\varphi,\beta}\cT}(T).
\]
\end{theoreme}

\begin{proof}
As in the proof of Theorem \ref{th:limSusytame}, we will assume $w=0$, $\cM'=\cM''$ and $\cS=(\id,\id)$. We will make an extensive use of \cite[\S\S5.2\,\&\,5.4]{Bibi06b}. As in the tame case, we start with a frame $\bme^o_{\varphi,\beta,\ell}$ of $\rP_\ell^{\varphi,\beta}\cM\defin \rP\gr^\rM_\ell\Psi_x^{\varphi,\beta}\cM$ which is orthonormal for $\rP^{\varphi,\beta}_\ell\cCS$, for any $\varphi,\beta,\ell$. The matrix of $\hb^2\partial_\hb$ in this frame takes the form $\pcU_{\varphi,\beta,\ell}-\pcQ_{\varphi,\beta,\ell}\hb-\pcU_{\varphi,\beta,\ell}^\dag\hb^2$. The constructions of \loccit produce a frame~$\wt\varepsilong$ of $\wt\cM_{|X^*}$ which is orthonormal with respect to $\cCS$ (\cf \cite[Cor\ptbl5.4.3]{Bibi06b}), and we wish to compute the matrix of $\hb^2\partial_\hb$ in this frame. Let us recall the steps going from $\bme$ to~$\wt\varepsilong$.

\begin{enumerate}
\item\label{enum:wildstep1}
We first lift, exactly as in the tame case, each of the frames $\bme^o_{\varphi_i,\beta}$ to a frame $(\wh\bme_{\varphi_i,\beta})_\beta$ of $\wt\cR_i^\wedge$. Arguing as in the tame case, the matrix $\wh B$ of $\hb^2\partial_\hb$ in the frame~$\wh\bme$ takes the form $\bigoplus_i\wh B_{ii}$, where $\wh B_{ii}$ decomposes as in \eqref{eq:B}. As remarked in \S\ref{subsec:speintwild} (after (DEC$^\wedge$)), we can assume that, when restricted to $\hb=0$, the frame $\wh\bme_{|\hb=0}$ is a frame of $\wt\cM/\hb\wt\cM$, and is compatible with the corresponding $\varphi$-decomposition, so $\wh B_{ii}(x,0)$ is convergent.

\item\label{enum:wildstep2}
We then work locally with respect to~$\hbo$ and in small sectors in the variable~$x$. Let $\pi:Y\to X$ be the real oriented blow up of~$X$ at the origin, with $S^1=\pi^{-1}(0)$, and let us set $\cY=Y\times\nobreak\Omega_0$. Let us denote by $\cA_{\cY}$ the sheaf of $C^\infty$ functions on $\cY$ which are holomorphic with respect to~$\hb$ and holomorphic on $X^*\times\nobreak\Omega_0$. We then lift the frame $\wh\bme$ to a $\cA_{\cY,\xi_o,\hbo}$-frame, for any $\xi_o\in S^1$ and $\hbo\in\Omega_0$, and we get frames $\bmea^{(\xi_o,\hbo)}=(\bmea^{(\xi_o,\hbo)}_{\varphi_i})_i$. We can assume that, when restricted to $\hb=0$, the frame $\bmea^{(\xi_o,0)}_{|\hb=0}$ comes from a frame of $\wt\cM/\hb\wt\cM$ compatible with the $\varphi$-decomposition. The matrix $\sAB^{(\xi_o,\hbo)}$ of $\hb^2\partial_\hb$ satisfies the following properties (according to \cite[Lemma 5.2.6]{Bibi06b}):
\begin{enumerate}
\item
if $i,j\in I$ are distinct, the term $\sAB^{(\xi_o,\hbo)}_{ij}$ is infinitely flat along $S^1\times\nobreak\Omega_0$ in a neighbourhood of $(\xi_o,\hbo)$ and, for $\hbo=0$, $\sAB^{(\xi_o,0)}_{ij}(x,0)\equiv0$,
\item
for any $i\in I$, the term $\sAB^{(\xi_o,\hbo)}_{ii}$ has an asymptotic expansion equal to $\wh B_{ii}$ when $x\to0$ near the direction $\xi_o$, uniformly with respect to $\hb\in\nb(\hbo)$ and, for $\hbo=0$, $\sAB^{(\xi_o,0)}_{ii}(x,0)$ does not depend on $\xi_o$ and is holomorphic with respect to $x$ (it takes the form \eqref{eq:B} at $\hb=0$).
\end{enumerate}
It is then clear (after the tame case) that the limit, when $x\to0$ in the neighbourhood of the direction~$\xi_o$ and $\hb\in\nb(\hbo)$, of the characteristic polynomial of the coefficient of $-\hb$ in $\sAB^{(\xi_o,\hbo)}$ is equal to the RHS in~\ref{eq:limSusywild}.
\item\label{enum:wildstep3}
We now define the local untwisted $C^\infty$ frame $\varepsilong^{(\xi_o,\hbo)}=\bmea^{(\xi_o,\hbo)}\cdot A^{-1}(x,\hb)$, where $A=\bigoplus_iA_{ii}$ and each $A_{ii}$ is as in the tame case. Let $\infB^{(\xi_o,\hbo)}$ be the matrix of $\hb^2\partial_\hb$ in this frame. The non-diagonal blocks $\infB^{(\xi_o,\hbo)}_{ij}$ for $i\neq j$ remain infinitely flat when $x\to0$ in the direction $\xi_o$, as $A$ and $A^{-1}$ have moderate growth. Moreover, the~$\hb$-constant term $\infB^{(\xi_o,0)}(x,0)$ of $\infB^{(\xi_o,0)}(x,\hb)$ still satisfies $\infB^{(\xi_o,0)}(x,0)_{ij}=0$ if $i\neq j$, as $A$ is diagonal with respect to the $\varphi$-decomposition. Moreover, as in the tame case, $\infB^{(\xi_o,0)}(x,0)_{ii}(x,0)$ has a limit when $x\to0$. Then the same argument as in the tame case shows that the limit of the characteristic polynomial of the coefficient of~$-\hb$ in~$\infB^{(\xi_o,\hbo)}$ is the same as for $\sAB^{(\xi_o,\hbo)}$, hence is equal to the RHS in~\ref{eq:limSusywild}.
\item\label{enum:wildstep4}
We globalize the construction, by using a partition of unity with respect to $\xi_o$ and by using the argument of \cite[lemma 5.4.6]{Bibi01c} (\cf \cite[Lemma 5.2.11]{Bibi06b}), to get a frame $\varepsilong$. The base change from any $\varepsilong^{(\xi_o,\hbo)}$ to $\varepsilong$ takes the form $\id+R^{(\xi_o,\hbo)}(x,\hb)$, with $R^{(\xi_o,\hbo)}$ satisfying $\lim_{x\to0}\Lx^\delta R^{(\xi_o,\hbo)}=0$ uniformly with respect to $\hb\in\nb(\hbo)$, for some $\delta>0$. We also note that we can achieve $R^{(\xi_0,0)}(x,0)\equiv0$ in the base change, as the frame $\bmea^{(\xi_o,0)}_{|\hb=0}$ is already globally defined with respect to~$\xi$, and so does the frame $\varepsilong^{(\xi_o,0)}_{|\hb=0}$; moreover, the argument of \cite[lemma 5.4.6]{Bibi01c} gives a contribution equal to $\id$ at $\hbo=0$ for the base change. Therefore, the conclusion of \eqref{enum:wildstep3} holds for the matrix $\infB$ of $\hb^2\partial_\hb$ in the frame $\varepsilong$.
\item\label{enum:wildstep5}
Now, the base change from $\varepsilong$ to~$\wt\varepsilong$ given by \cite[Prop\ptbl5.4.1 and (5.3.2)]{Bibi06b} takes the form
\[
\wt\varepsilong=\varepsilong\cdot(\id+S'(x,\hb))^{-1}(\id+U_0(x))^{-1}\diag(e^{\hb\ov{\varphi_i}}\id),
\]
where $S'(x,\hb)$ is continuous and holomorphic with respect to~$\hb$ on $X^*\times\nobreak\nb(\{\hb\leq\nobreak1\})$, and satisfies $S'(x,0)\equiv0$, and $U_0(x)$ is continuous with respect to $x\in X$, $U_0(0)=0$, and $U_0$ is diagonal with respect to the $\varphi$-decomposition. As we are only interested in the coefficient of $-\hb$ in the matrix $\wt{\infB}$ of $\hb^2\partial_\hb$ in the frame~$\wt\varepsilong$, and as the matrix of the base change is holomorphic with respect to~$\hb$, it is enough to consider the corresponding coefficients in the conjugate matrix
\begin{multline*}
\diag(e^{-\hb\ov{\varphi_i}}\id)(\id+U_0(x))(\id+S'(x,\hb))\cdot \infB(x,\hb)\\
{}\cdot(\id+S'(x,\hb))^{-1}(\id+U_0(x))^{-1}\diag(e^{\hb\ov{\varphi_i}}\id).
\end{multline*}
Let us set $\infB(x,\hb)=\infB^{(0)}(x)-\hb\infB^{(1)}(x)+\cdots$ and $S'(x,\hb)=\hb S^{\prime(1)}(x)+\cdots$. On the one hand, we know that $\infB^{(0)}(x)$ is diagonal with respect to the $\varphi$-decomposition and has a limit when $x\to0$, and the limit when $x\to0$ of the characteristic polynomial of $\infB^{(1)}$ is the RHS in~\ref{eq:limSusywild}.

On the other hand, $S'$ defines a continuous map $X\to \Mat_d(L^2(\bS))$, and, as such, $S'(0)=0$ (\cf \cite[Prop\ptbl5.4.1]{Bibi06b}). Therefore, the (Fourier) coefficient $S^{\prime(1)}(x)$ is a continuous function of $x$ and has limit~$0$ when $x\to0$. We thus have
\begin{multline*}
(\id+U_0(x))(\id+S'(x,\hb))\cdot \infB\cdot(\id+S'(x,\hb))^{-1}(\id+U_0(x))^{-1}\\\hspace*{-2cm}=(\id+U_0(x))\infB^{(0)}(\id+U_0(x))^{-1}\\
-\hb\cdot(\id+U_0(x))\big(\infB^{(1)}+[\infB^{(0)},S^{\prime(1)}]\big)(\id+U_0(x))^{-1}+\cdots
\end{multline*}
As the~$\hb$-constant term above is diagonal with respect to the $\varphi$-decomposition, it commutes with $\diag(e^{\hb\ov{\varphi_i}}\id)$ and therefore is not altered by the conjugation by this matrix. It follows that the coefficient of $-\hb$ in $\wt{\infB}$ is
\[
(\id+U_0(x))\big(\infB^{(1)}+[\infB^{(0)},S^{\prime(1)}]\big)(\id+U_0(x))^{-1}.
\]
As $\lim_{x\to0}[\infB^{(0)},S^{\prime(1)}]=0$, the limit, when $x\to0$, of its characteristic polynomial (that is, the LHS in~\ref{eq:limSusywild}), is thus equal to the limit, when $x\to0$, of the characteristic polynomial of $\infB^{(1)}(x)$, which we know to be the RHS in~\ref{eq:limSusywild}.\qedhere
\end{enumerate}
\end{proof}

\section{A review on exponential twist and Fourier-Laplace transform}\label{sec:exptwFL}

In this section, we review some results of~\cite{Bibi05}. The base manifold~$X$ will be $\PP^1$ with its two affine charts having coordinates $t$ and~$t'$. We will denote by $\cP^1$ the corresponding manifold $\cX$ as in the notation of \S\ref{subsec:vartw}.

\Subsection{De~Rham cohomology with exponential twist for twistor $\cD$-modules}\label{subsec:exptw}
Although we do not gain much by simply attaching a polarized variation of twistor structure to a polarized variation of Hodge structure, the advantage is clearer when we apply an exponential twist and integrate.

\subsubsections{De~Rham cohomology with exponential twist}
Let $\ccM$ be a $\cD_{\PP^1}$-module and $\wt\ccM=\ccM(*\infty)$. The exponentially twisted de~Rham cohomology is the hypercohomology on~$\PP^1$ of the complex
\[
\DR(\ccM\otimes\ccE^{-t})\defin\{0\to\wt\ccM\To{\nabla-dt}\wt\ccM\to0\},
\]
that we denote $H^*_{\DR}(\PP^1,\ccM\otimes\ccE^{-t})$. If we assume $\ccM$ to be $\cD_{\PP^1}$-holonomic, then $M\defin\Gamma(\PP^1,\wt\ccM)$ is a holonomic $\Clt$-module and the previous hypercohomology is the cohomology of the complex
\begin{equation}\label{eq:dRexptwist}
0\to M\To{\partial_t-1}M\to0
\end{equation}
and has cohomology in degree one only, this cohomology being a finite dimensional $\CC$-vector space. Its dimension is computed in \cite[Prop\ptbl1.5, p\ptbl79]{Malgrange91}. If $\ccM$ has a regular singularity at infinity, this dimension is equal to the sum (over the singular points at finite distance) of the dimension of vanishing cycles of $\DR\ccM$.

\subsubsections{Exponential twist of a twistor $\cD$-module}
We will use the notation of \S\ref{subsec:vartw}. Let~$\cM$ be a left $\cR_{\cP^1}$-module ($\cP^1=\PP^1\times\nobreak\Omega_0$). We denote by $\wt\cM$ the localized module $\cR_{\cP^1}(*\infty)\otimes_{\cR_{\cP^1}}\cM$. We set $\cE^{-t/\hb}=\cO_{\cP^1}(*\infty)$ with~$\hb$-connection $\hb d-dt$. The exponentially twisted $\cR_{\cP^1}$-module $\FcM$ is $\cE^{-t/\hb}\otimes_{\cO_{\cP^1}(*\infty)}\wt\cM$ equipped with its natural~$\hb$-connection.

It is useful to introduce the category $\wtRTriples(\PP^1)$, whose objects $(\wt\cM',\wt\cM'',\wt\cCS)$ consist of $\cR_{\cP^1}(*\infty)$-modules with a pairing taking values in the sheaf of distributions on $(\PP^1\moins\{\infty\})\times\nobreak \bS$ which have moderate growth at $\{\infty\}\times\nobreak\bS$ (\ie which can be extended as distributions on $\PP^1\times\nobreak\bS$) and depend continuously on $\hb\in\bS$.

If we remark that, for $\hb\in\bS$, the $C^\infty$ function $e^{-t/\hb}\cdot\ovv{e^{-t/\hb}}=e^{\hb\ov t-t/\hb}$ has moderate growth as well as all its derivatives with respect to $t,\ov t$ when $t\to\infty$, we can associate to an object $\cT=(\cM',\cM'',\cCS)$ of $\RTriples(\PP^1)$ the object $\FcT=(\FcM',\FcM'',e^{\hb\ov t-t/\hb}\cCS)$ of $\wtRTriples(\PP^1)$.

Let now $\cT$ be a polarized twistor $\cD$-module on $\PP^1$. Then the previous construction can be refined to give an object $\FcT=(\FcM',\FcM'',{}^F\!\cCS)$ of $\RTriples(\PP^1)$. The regularization ${}^F\!\cCS$ of $e^{\hb\ov t-t/\hb}\cCS$ is obtained by specializing $\cFCS$, whose construction is recalled in \S\ref{subsec:Fourier-Laplace}, at $\tau=1$. Let $a$ be the constant map on $\PP^1$. The following is proved in \cite{Bibi04} (and its erratum):

\begin{theoreme}[Exponentially twisted Hodge theorem]\label{th:exptwHodge}
If $(\cT,\cS)$ is a polarized regular twistor $\cD$-module of weight~$w$ on $\PP^1$, then $\cH^0a_+\FcT$ is a polarized twistor structure of weight~$w$.\qed
\end{theoreme}

\subsection{Fourier-Laplace transform (\cf \cite[Appendix]{Bibi01c})}\label{subsec:Fourier-Laplace}
We continue to work with the projective line $\PP^1$ equipped with its two charts having coordinates $t$ and~$t'$, and we consider another copy of it, denoted by $\wh\PP^1$, having coordinates $\tau,\tau'$. We will set $\infty=\{t'=0\}$ and $\wh\infty=\{\tau'=0\}$. We consider the diagram
\begin{equation}\label{eq:P1whP1}
\begin{array}{c}\xymatrix{
&\PP^1\times\wh\PP^1\ar[ld]_p\ar[rd]^{\wh p}&\\
\PP^1&&\wh\PP^1
}\end{array}
\end{equation}
Let $\cM$ be a good $\cR_{\cP^1}$-module (in the sense of \cite[\S1.1.c]{Bibi01c}). We set $\cFcM\defin p^+\wt\cM(*\wh\infty)\otimes\nobreak\ccE^{-t\tau/\hb}$ (\cf\cite[\S A.2]{Bibi01c}). We know (\cf\cite[Prop\ptbl A.2.7]{Bibi01c}) that $\cFcM$ is a good $\cR_\cZ(*\wh\infty)$-module, where $Z=\PP^1\times\nobreak\wh\PP^1$ and $\cZ=Z\times\nobreak\Omega_0$. Taking direct images, $\wh p_+\cFcM$ is a coherent $\cR_{\wh\cP^1}(*\wh\infty)$-module.

Let $\cT=(\cM',\cM'',\cCS)$ be an object of $\RTriples(\PP^1)$, such that $\cM',\cM''$ are $\cR_{\cP^1}$-good. Then $\cFcT$ is defined as $(\cFcM',\cFcM'',\cFCS)$, where $\cFcM',\cFcM''$ are as above and $\cFCS$ is defined in \cite[p\ptbl196]{Bibi01c} (note that the twist for $\cCS$ needs some care). The fibre at $\tau=1$ (suitably defined as nearby cycles) of $\cFcT$ is identified with $\FcT$. The Fourier-Laplace transform $\wh\cT$ of $\cT$ is defined as the direct image of $\cFcT$ by $\wh p$.

Let us assume that $\cT$ is integrable. Then (\cf \cite[Rems\ptbl A.2.9 \& A.2.15]{Bibi01c}), $\cFcT$ is also integrable.

\begin{lemme}\label{lem:hbdhbtdt}
The action of $\hb^2\partial_\hb$ on $\cFcM=p^+\wt\cM(*\wh\infty)\otimes\cE^{-t\tau/\hb}$ satisfies, for any local section of~$\wt\cM$,
\[
(\hb^2\partial_\hb+\tau\partiall_\tau)(m\otimes\cE^{-t\tau/\hb})=(\hb^2\partial_\hb m)\otimes\cE^{-t\tau/\hb}.
\]
\end{lemme}

\begin{proof}
This directly follows from the definition of the actions (\cf\cite[A.2.2 \& A.2.3]{Bibi01c}).
\end{proof}

Let us also notice that one gets a similar relation with the coordinate~$\tau'$ by using the relation $\tau'\partiall_{\tau'}=-\tau\partiall_\tau$.

\subsection{Fourier-Laplace transformation of twistor $\cD$-modules}\label{subsec:localFL}
Let $(\cT,\cS)$ be a polarized regular twistor $\cD$-module of weight~$w$ (in the sense of \cite{Bibi01c} or \cite{Mochizuki07}) on $\PP^1$. The Fourier-Laplace transform $(\wh\cT,\wh\cS)$ is an object of the same kind on the \emph{analytic} affine line $\Afuh$ with coordinate~$\tau$, after \cite{Bibi04} and \cite{Bibi05b}. Moreover, it is smooth on the punctured line $\Afuh\moins\{\tau=0\}$, and its restriction at $\tau=1$ is naturally identified with $\cH^0a_+\FcT$.

In \cite[Cor\ptbl5.20 \& Prop\ptbl5.23]{Bibi05b}, we also show an ``inverse stationary phase formula'' computing the nearby and vanishing cycles of $\wh\cT$ at $\tau=0$ in terms of the nearby cycles at $t'=0$ of $\cT$.

Let us moreover assume that $(\cT,\cS)$ is integrable. Recall that $a$ denotes the constant map $\PP^1\to\pt$. The basic comparison result \cite[Cor\ptbl 5.20 and Prop\ptbl5.23]{Bibi05b}, together with Lemma \ref{lem:hbdhbtdt}, gives (\cf \S\ref{subsec:speinttame} for the notation):

\begin{proposition}\label{prop:psiwtTT}
We have natural isomorphisms of integrable polarized pure twistor structures of weight $w+\ell$ ($\ell\in\ZZ$, $\beta\in(-1,0)$ for the first line):
\begin{equation}\tag*{(\protect\ref{prop:psiwtTT})$(*)$}\label{eq:psiwtTT}
\begin{split}
(\gr_\ell^\rM\Psi_\tau^\beta\wh\cT,\hb^2\partial_\hb+\beta\hb)&\simeq(\gr_\ell^\rM\Psi_{t'}^\beta\cT,\hb^2\partial_\hb),\\
(\gr_\ell^\rM\phi_\tau^{-1} \wh\cT,\hb^2\partial_\hb-\hb)&\simeq(\gr_\ell^\rM\Psi_{t'}^0\cT,\hb^2\partial_\hb)
\end{split}
\end{equation}
and we also have
\begin{equation}\tag*{(\protect\ref{prop:psiwtTT})$(**)$}\label{eq:psiwtTTP}
(\rP\gr_0^\rM\Psi_\tau^0\wh\cT,\hb^2\partial_\hb)\simeq(\cH^0a_+\cT,\hb^2\partial_\hb).
\end{equation}
\end{proposition}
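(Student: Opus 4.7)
The plan is to start from the inverse stationary phase isomorphisms of \cite[Cor\ptbl5.20 \& Prop\ptbl5.23]{Bibi05b}, which already furnish the identifications as polarized pure twistor structures of the correct weight, and to use Lemma \ref{lem:hbdhbtdt} to upgrade them to isomorphisms in the integrable category while keeping track of the $\hb$-shift in the statement. The point of Lemma \ref{lem:hbdhbtdt} is precisely that the action of $\hb^2\partial_\hb$ on $\cFcM$ differs from the pullback of the action on $\wt\cM$ by $-\tau\partiall_\tau$; since $\wh p_+$ commutes with $\hb^2\partial_\hb$, the same relation holds between the action on $\wh\cT = \wh p_+\cFcT$ and the one inherited from $\cT$. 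Moreover, by \cite[Prop\ptbl7.3.1 \& Lemma~7.3.8]{Bibi01c}, the $V$-filtration at $\tau = 0$ and the associated monodromy filtration $\rM_\bbullet$ are stable under $\hb^2\partial_\hb$, so the operator descends unambiguously to each $\gr^\rM_\ell\Psi_\tau^\beta\wh\cT$.

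For the nearby-cycle statement, I would use that in the decreasing convention $\tau\partial_\tau - \beta$ is nilpotent on $\gr^\beta_V\wh\cM$ and, up to sign, coincides with the monodromy nilpotent generating $\rM_\bbullet$. Consequently $\tau\partiall_\tau = \hb\tau\partial_\tau$ acts on $\Psi_\tau^\beta\wh\cM$ as $\hb\beta$ plus an $\hb$-multiple of a nilpotent of $\rM$-degree $-2$; the latter vanishes as an endomorphism of each $\gr^\rM_\ell$. Therefore, on $\gr^\rM_\ell\Psi_\tau^\beta\wh\cT$, the $\hb^2\partial_\hb$-action inherited from $\wh\cT$ equals the one inherited from $\cT$ through the stationary phase isomorphism, minus $\hb\beta$, which is precisely the claim. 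The vanishing-cycle statement follows by an identical argument applied to $\phi_\tau^{-1}\wh\cT$: in the decreasing convention it corresponds to $\gr^{-1}_V\wh\cM$, so $\tau\partial_\tau$ acts as $-1$ modulo an $\rM$-lowering nilpotent, producing the shift by $-\hb$ on the left-hand side.

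For the primitive identification \ref{eq:psiwtTTP}, I would invoke the part of \cite[Prop\ptbl5.23]{Bibi05b} which identifies the summand of $\rP\gr_0^\rM\Psi_\tau^0\wh\cT$ not accounted for by the local monodromy data of $\cT$ at $t' = 0$ with the global direct image $\cH^0a_+\cT$. Since $\beta = 0$ here, Lemma \ref{lem:hbdhbtdt} produces no $\hb$-shift and the two $\hb^2\partial_\hb$-actions match on the nose. Compatibility with the polarizations is inherited from \emph{loc.\ cit.} in all three cases, since the Tate twists used there already encode the weight bookkeeping.

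The step that I expect to require the most care is checking that the isomorphisms extracted from \cite{Bibi05b}, constructed by matching the formal decomposition of $\cT$ at $t' = 0$ against the Kashiwara--Malgrange filtration of $\wh\cT$ at $\tau = 0$, genuinely respect the absolute $\hb$-connection and not merely the underlying twistor datum. Once this integrability compatibility is granted, everything reduces to the clean identity of Lemma \ref{lem:hbdhbtdt} together with the stability properties from \cite[Chap\ptbl7]{Bibi01c}; the remaining work is the convention bookkeeping that converts $\phi_\tau^{-1}$ into $\gr^{-1}_V$ and produces the corresponding $-\hb$ shift.
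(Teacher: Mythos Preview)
Your proposal is correct and follows exactly the route the paper takes: the paper derives the proposition directly from the inverse stationary phase isomorphisms of \cite[Cor.~5.20 \& Prop.~5.23]{Bibi05b} together with Lemma~\ref{lem:hbdhbtdt}, and you have simply spelled out the bookkeeping (the $\tau\partiall_\tau$-shift on $\gr_V^\beta$ and its vanishing after passing to $\gr^\rM_\ell$) that the paper leaves implicit. Your appeal to \cite[Prop.~7.3.1 \& Lemma~7.3.8]{Bibi01c} for the stability of the $V$- and $\rM$-filtrations under $\hb^2\partial_\hb$ is also in line with what the paper invokes elsewhere (e.g.\ in the proof of Corollary~\ref{cor:psiHodgeinfty}).
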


In Appendix \ref{sec:appendiceA} (Theorem \ref{th:FourierlocalT}), we show that the Fourier-Laplace transform $\wh\cT$ on $\Afuh$ naturally extends as a wild twistor $\cD$-module (in the sense of \cite{Bibi06b}, \cf also \cite{Mochizuki08}) near $\wh\infty\in\wh\PP^1$ and we relate the corresponding nearby cycles with the vanishing cycles of $\cT$ at its critical points (``stationary phase formula''):

\begin{corollaire}[of Theorem \ref{th:FourierlocalT}, \eqref{eq:PsiintMicro} and \eqref{eq:phiintMicro}]\label{cor:psiwtTT}
For any $c\in\nobreak\CC$, we have natural isomorphisms of integrable polarized pure twistor structures of weight $w+\ell$ ($\ell\in\ZZ$, $\beta\in(-1,0)$ for the first line):
\begin{equation}\tag*{(\protect\ref{cor:psiwtTT})$(*)$}\label{eq:psiwtTTinf}
\begin{split}
(\gr_\ell^\rM\Psi_{\tau'}^{c/\tau',\beta}\wh\cT,\hb^2\partial_\hb-(\beta+1)\hb)&\simeq (\gr_\ell^\rM\Psi_{t+c}^\beta\cT,\hb^2\partial_\hb),\\
(\gr_\ell^\rM\Psi_{\tau'}^{c/\tau',0}\wh\cT,\hb^2\partial_\hb-\hb)&\simeq (\gr_\ell^\rM\phi_{t+c}^{-1}\cT,\hb^2\partial_\hb-\hb).
\end{split}
\end{equation}
\end{corollaire}

\section{Twistor structures and Hodge structures}

In this section, we make explicit the functor $\Tw$ which associates to any polarized complex Hodge structure (\resp variation of Hodge structure, \resp polarized complex mixed Hodge structure) an integrable polarized twistor structure (\resp ...). In this section, $Y$ will denote a complex manifold,~$X$ will denote a disc with coordinate~$x$ and $X^*$ will denote the punctured disc $X\moins\{0\}$.

\subsection{The integrable variation attached to a polarizable variation of Hodge structures}\label{subsec:VTSVHS}

Let $(V,\Vnablaf)$ be a holomorphic vector bundle with an integrable holomorphic connection on a complex manifold $Y$. Let us assume that $(V,\Vnablaf)$ underlies a polarized variation of Hodge structures of weight~$w$. The $C^\infty$-bundle $H$ associated to $V$ comes equipped with a flat $C^\infty$ connection $D=\Vnablaf+d''$ and a decomposition $H=\oplus_pH^{p,w-p}$ indexed by integers. There is a $D$-flat sesquilinear pairing~$k$ on~$H$ such that the decomposition is $k$-orthogonal, and the sesquilinear pairing $h$ such that the decomposition is $h$-orthogonal and $h=i^{-w}(-1)^pk$ on $H^{p,w-p}$ is Hermitian positive definite. As usual, we set $F^pV=\bigoplus_{r\geq p}H^{r,w-r}$.

For any $j\in\hZZ$, the Tate twist is defined as
\[
(V,\Vnablaf,F^\cbbullet V,k,w)(j)\defin(V,\Vnablaf,F^\cbbullet V,i^{-2j}k,w-2j).
\]

We denote by $\cH'=R_{F[w]}V$, $\cH''=R_FV$, the Rees modules associated to $F[w]^\cbbullet V\defin F^{w+\cbbullet}V$ and $F^\cbbullet V$, that is:
\begin{equation}\label{eq:HF}
\begin{split}
\cH'&\defin\tbigoplus_pF[w]^p\hb^{-p}=\tbigoplus_r\hb^{w-r}H^{r,w-r}[\hb],\\
\cH''&\defin\tbigoplus_pF^p\hb^{-p}=\tbigoplus_r\hb^{-r}H^{r,w-r}[\hb].
\end{split}
\end{equation}
We denote by $R_Fk$ the map naturally induced by $k$ on $R_{F[w]}V\otimes_{\CC[\hb,\hbm]}\ovv{R_FV}$ with values in $\cC^\infty_Y[\hb,\hbm]$. We associate to this variation the triple $\cT=(R_{F[w]}V,R_FV,R_Fk)$. We set $\cS=(S',S'')$ with $S',S'':\cH''\to\cH'$, $S''$ is the multiplication by $\hb^w$ and $S'$ by $(-\hb)^w$.

The integrable connection $\nabla$ is defined as $\Vnablaf+d'_\hb$. We note that $R_{F'}V$ and $R_{F''}V$ are stable by $\hb\nabla_{\partial_\hb}$ (reflecting the fact that $\cU=0$). In particular, the action of $\hb^2\partial_\hb$ enables one to recover the grading of $R_FV$, hence the filtration $F^\cbbullet V$. The following is easy:

\begin{lemme}\label{lem:vhstw}
The object
\[
\Tw(V,F^\cbbullet V,k,w)\defin\big(\cT=(R_{F[w]}V,R_FV,R_Fk),\cS,\hb^2\partial_\hb\big)
\]
is an integrable polarized variation of twistor structures of weight~$w$.\qed
\end{lemme}

\begin{remarque}
According to the convention made in Remark \ref{rem:tateint}, the functor $\Tw$ is compatible with Tate twist (that is, $[\Tw(V,F^\cbbullet V,k,w)](j)$ is canonically isomorphic to $\Tw[(V,F^\cbbullet V,k,w)(j)]$.
\end{remarque}

\begin{lemme}\label{lem:susyspHodge}
Let $(\cT,\cS)$ be the integrable polarized twistor structure of weight~$w$ attached to a polarized Hodge structure of weight~$w$ (in particular, the ``no ramification'' condition is fulfilled). Then
\[
\SP^\infty_\cT(T)=\Susy_\cT(T)=\SP^0_\cT(T).
\]
\end{lemme}

\begin{proof}
According to \eqref{eq:tateSusySp} one can assume $w=0$. On the one hand, $\cU=0$, so $\cQ$ is conjugate to the opposite of the residue at $\hb=0$ of $\partial_\hb$ acting on $R_FV$. As $\hb\partial_\hb$ acts as $-p\id$ on $F^p\hb^{-p}$, we find that $\Susy_\cT(T)=\prod_p(T-p)^{\dim \gr_F^p}$.

We now have $G=\CC[\hb,\hbm]\otimes_\CC H$ and $V^pG=\hb^{-p}\CC[\hbm]\otimes_\CC H$ (for the $V$-filtration at $\hb=\infty$). Then \eqref{eq:HF} shows that (using the notation in Definition \ref{def:spinfty}) $\nu_p=\dim H^{p,-p}=\dim \gr_F^p$, hence the first equality. On the other hand, the $V$-filtration at $\hb=0$ is given by $V^pG=\hb^p\CC[\hb]\otimes_\CC H$ and $G$ has a regular singularity at~$\hb=0$, so there is no nontrivial exponential term in the decomposition \eqref{eq:Levelt-Turrittin}. We then have (using the notation in Definition \ref{def:sporigin}) $\mu_{0,p}=\dim H^{-p,p}$, hence the second equality.
\end{proof}

\Subsection{Integrable twistor structure attached to a polarized complex mixed Hodge structure}
Let $V_o$ be a complex vector space equipped with a filtration $F^\cbbullet V_o$, a nilpotent endomorphism $\rN_o$ and a sesquilinear pairing $k_o$. We denote by $\rM_\bbullet$ the monodromy filtration of $V_o$ associated to $\rN_o$. Let $w\in\ZZ$. We say (\cf \cite{Schmid73,K-K87}) that $(V_o,F^\cbbullet V_o,k_o,\rN_o)$ is a polarized complex mixed Hodge structure of weight~$w$ if the following conditions are fulfilled:
\begin{enumerate}
\item
$k_o$ is $(-1)^w$-Hermitian and $\rN_o$ is skew-adjoint with respect to $k_o$,
\item
$\rN_oF^\cbbullet V_o\subset F^{\cbbullet-1} V_o$,
\item
if we set $\ov F{}^pV_o=\ov{(F^{w-p+1}V_o)^\perp}$ (which also satisfies $N\ov F{}^pV_o\subset\ov F{}^{p-1}V_o$), then $(F^\cbbullet V_o,\ov F{}^\cbbullet V_o,\rM_\cbbullet)$ is a mixed Hodge structure of weight~$w$,
\item
the object $(\rP\gr_\ell^\rM V_o,F^\cbbullet\rP\gr_\ell^\rM V_o,k_o(\cbbullet,\ov{\rN_o^\ell\cbbullet}))$ is a polarized complex Hodge structure of weight $w+\ell$.
\end{enumerate}

\begin{remarque}[{\cf \cite[Lemma 2.8]{MSaito89}}]\label{rem:Ipq}
If $(V_o,F^\cbbullet V_o,k_o,\rN_o)$ is a polarized complex mixed Hodge structure of weight~$w$, then there exists an increasing filtration $\wt F_\bbullet V_o$ which is opposite to $F^\cbbullet V_o$ (\ie $V_o$ decomposes as $\bigoplus_p F^p\cap\wt F_p$) and which satisfies $\rN_o\wt F_\cbbullet V_o\subset \wt F{}_{\cbbullet-1} V_o$ (in particular, $\wt F{}_\cbbullet V_o$ is stable by $\rN_o$). Indeed, $V_o$ is bigraded by Deligne's $I^{p,q}$ (\cf \cite{DeligneHII}) with
\[
I^{p,q}=(F^p\cap W_{p+q})\cap\Big(\ov F{}^q\cap W_{p+q}+\textstyle\sum_{j\geq1}\ov F{}^{q-j}\cap W_{p+q-j-1}\Big),
\]
where $W_\ell\defin \rM_{w+\ell}$, and $F^pV_o=\bigoplus_{p'\geq p}\bigoplus_qI^{p',q}$. We can set $\wt F{}_pV_o=\bigoplus_{p'\leq p}\bigoplus_qI^{p',q}$.
\end{remarque}

\begin{definition}\label{def:twnilporb}
For a polarized complex mixed Hodge structure $(V_o,F^\cbbullet,k_o,\rN_o)$ of weight~$w$, we set $\Tw(V_o,F^\cbbullet,k_o,\rN_o)\defin(\cT,\cS,\cN,\hb^2\partial_\hb)$ with
\begin{enumerate}
\item
$\cT=(R_{F[w]}V_o,R_FV_o,R_Fk_o)$ (an object of $\RTriples(\pt)$),
\item
$\cS=((-\hb)^w,\hb^w)$ (a sesquilinear duality of $\cT$ of weight~$w$),
\item
$\cN:\cT\to\cT(-1)$ defined as $\cN=(\hb \rN_o,-\hb \rN_o)$,
\item
$\hb^2\partial_\hb$ is the natural derivation on $R_{F[w]}V_o,R_FV_o$.
\end{enumerate}
\end{definition}

\begin{lemme}
If $(V_o,F^\cbbullet,k_o,\rN_o)$ is a polarized complex mixed Hodge structure of weight~$w$, the monodromy filtration of $\hb \rN_o$ on $R_FV_o$ is such that $\gr_\ell^{\rM(\hb \rN_o)}R_FV_o=R_F\gr_\ell^{\rM(\rN_o)}V_o$. Moreover, the object $(\gr_\cbbullet^\rM\cT,\gr_\cbbullet^\rM\cS)$ is a graded Lefschetz twistor structure of weight~$w$ (\cf \cite[\S2.1.e]{Bibi01c}). Last, we have a canonical isomorphism of objects of weight $w+\ell$ ($\ell\geq0$):
\[
\rP\gr_\ell^\rM\Tw(V_o,F^\cbbullet,k_o,\rN_o)\isom\Tw[\rP\gr_\ell^\rM(V_o,F^\cbbullet,k_o,\rN_o)].
\]
\end{lemme}

\begin{proof}
Let us indicate the proof for the last part. We can reduce to weight~$0$ by twisting by $w/2$, and also to $\cS=(\id,\id)$. The left-hand side in the formula is by definition (\cf \cite[Example 2.1.14]{Bibi01c}) given by
\[
\cT_\ell=((\hb \rN_o)^\ell R_F\rP\gr_\ell^\rM V_o,R_F\rP\gr_\ell^\rM V_o,R_Fk_o),\quad\cS_\ell=((\hb \rN_o)^\ell,(-\hb \rN_o)^\ell),
\]
and the action of $\hb^2\partial_\hb$ is the natural one. According to Lemma \ref{lem:vhstw}, the right-hand side is given by
\[
\wt\cT_\ell=(R_{F[\ell]}\rP\gr_\ell^\rM V_o,R_F\rP\gr_\ell^\rM V_o,R_Fk_o(\cbbullet,\ovv{\rN_o^\ell\cbbullet})),\quad\wt\cS_\ell=((-\hb)^\ell,\hb^\ell),
\]
and the action of $\hb^2\partial_\hb$ is the natural one. If one notices that $R_{F[\ell]}\rP\gr_\ell^\rM V_o=\hb^\ell R_F\rP\gr_\ell^\rM V_o$, then one checks that $\varphi\defin((-\rN_o)^\ell,\id):(\cT_\ell,\cS_\ell)\to(\wt\cT_\ell,\wt\cS_\ell)$ is an isomorphism.
\end{proof}

Let us note that, by definition, for $(\cT,\cS,\cN,\hb^2\partial_\hb)$ as in Definition \ref{def:twnilporb}, the object $(\gr_\bbullet^\rM\cT,\gr_\bbullet^\rM\cS,\gr_{-2}^\rM\cN,\hb^2\partial_\hb)$ is a polarized graded Lefschetz twistor structure of weight~$w$ and type $-1$ (\cf \cite[\S2.1.e]{Bibi01c}). For such an object, there is a reduction to weight~$0$ and type~$0$ (\cf \loccit) giving rise to a polarized (graded Lefschetz) twistor structure of weight~$0$ (and type~$0$). This structure remains integrable and therefore comes equipped with a $\Susy$ polynomial. We denote it by $\Susy_{\Tw(V_o,F^\cbbullet,k_o,\rN_o)}(T)$.

\begin{lemme}\label{lem:SP=Susy}
Let $(V_o,F^\cbbullet,k_o,\rN_o)$ be a polarized complex mixed Hodge structure of weight~$w$. Then
\[
\SP^\infty_{\Tw(V_o,F^\cbbullet,k_o,\rN_o)}(T)=\Susy_{\Tw(V_o,F^\cbbullet,k_o,\rN_o)}(T)=\SP^0_{\Tw(V_o,F^\cbbullet,k_o,\rN_o)}(T).
\]
\end{lemme}

\begin{proof}
Each $\gr^\rM_\ell\cT$ comes equipped with a polarization $\cS_\ell$ defined from that on the various $\rP\gr^\rM_{\ell'}\cT$ by using the Lefschetz decomposition, making it a polarized twistor structure of weight $w+\ell$ (\cf \cite[Rem\ptbl2.1.15]{Bibi01c}), and one has
\[
\Susy_{\Tw(V_o,F^\cbbullet,k_o,\rN_o)}(T)=\prod_{\ell\in\ZZ}\Susy_{\gr_\ell^\rM\Tw(V_o,F^\cbbullet,k_o,\rN_o)}(T).
\]
On the other hand, because each $\gr_\ell^\rM R_FV_o$ is a free $\CC[\hb]$-module (being equal to $R_F\gr_\ell^\rM V_o$) we have such a product formula for $\SP^0$ and $\SP^\infty$, according to Remark \ref{rem:modifnabla}. Then Lemma \ref{lem:susyspHodge} applies.
\end{proof}

\begin{definition}[Vanishing cycles, {\cf \cite[Prop\ptbl2.1.3]{K-K87}}]\label{def:vanishingcycles}
Consider a polarized complex mixed Hodge structure $(V_o,F^\cbbullet,k_o,\rN_o)$ of weight~$w$. The vanishing cycle polarized complex mixed Hodge structure $(\wt V_o,\wt F^\cbbullet,\wt k_o,\wt \rN_o)$ of weight $w+1$ attached to it is defined as follow:
\[
\wt V_o=\rN_oV_o,\quad \wt F^\cbbullet=\rN_oF^\cbbullet,\quad \wt k_o(\rN_ox,\ov{\rN_oy})=k_o(x,\ov{\rN_oy}),\quad \wt \rN_o=N_{o|\wt V_o}.
\]
\end{definition}

\subsection{Extension of $\Tw$ through a singularity}\label{subsec:Schmid}
Let $(V,\Vnablaf)$ be a holomorphic bundle with connection on the punctured disc~$X^*$ underlying a polarized variation of Hodge structure of weight~$w$. We are in the situation considered in \S \ref{subsec:VTSVHS}. According to Lemma \ref{lem:vhstw}, $\big(\cT=(R_{F[w]}V,R_FV,R_Fk),\cS\big)$ is a polarized variation of twistor structures of weight~$w$ on~$X^*$. We will indicate how to extend it as a polarized twistor $\cD$-module on~$X$.

According to Schmid \cite{Schmid73}, the $\cO_X[\xm]$-submodule $\wt\ccM$ of $j_*V$ consisting of sections whose $h$-norm has moderate growth at the origin is locally free and the connection~$\nabla$ extends to it with regular singularities. We denote by $\ccM$ the $\cD_X$-submodule of $\wt\ccM$ generated by local sections $v$ whose $h$-norm is bounded by $C|x|^{-1+\epsilon}$ for some $C>0$ and $\epsilon>0$. It is known that~$\ccM$ is a regular holonomic $\cD_X$-module, which coincides with the \emph{minimal extension} of~$\wt\ccM$, so $\DR\ccM$ is the \emph{intermediate extension} (or intersection complex) of the local system $\ker\Vnablaf$ on $X^*$. Moreover, the filtration $F^\cbbullet V$ extends to a filtration of $\wt\ccM$ by holomorphic locally free $\cO_X$-modules, and then to a filtration of $\ccM$, which is a good filtration when we consider it as an increasing filtration. Lastly, the flat sesquilinear form $k$ defined from the metric $h$ extends as a $\cD_X\otimes_\CC\ov{\cD_X}$-linear pairing $k:\ccM\otimes_\CC\ov\ccM\to\Db_X$.

We can apply the Rees construction $R_F$ to these data.

\begin{proposition}[{\cf \cite[\S3.g]{Bibi05}}]\label{prop:RFMtw}
The object
\[
\big(\cT=(R_{F[w]}\ccM,R_F\ccM,R_Fk),\cS\big)
\]
is an integrable polarized twistor $\cD$-module of weight~$w$ on~$X$.\qed
\end{proposition}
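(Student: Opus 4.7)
The plan is to verify the defining axioms of a polarized twistor $\cD$-module of weight $w$ from \cite{Bibi01c} for the triple $(\cT,\cS)$ obtained from Schmid's extension data via the Rees construction. On the punctured disc $X^*$, everything is already established by Lemma \ref{lem:vhstw}, so the entire content of the proposition concentrates at the singular point $x=0$: strict specializability, identification of nearby and vanishing cycles with polarized complex mixed Hodge structures via $\Tw$, and the compatibility of the sesquilinear pairing with distribution values.

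First, I would establish strict specializability. Since $\wt\ccM$ is the regular meromorphic extension and $\ccM$ its minimal extension, the Kashiwara--Malgrange $V$-filtration $V^\cbbullet\ccM$ along $x=0$ is defined and has real indices by the unitarity of the monodromy eigenvalues (polarized variations of Hodge structure). Schmid's norm-growth characterization of $\wt\ccM$ implies that each step $F^p\ccM$ is strict with respect to $V^\cbbullet$, so $R_F\ccM$ and $R_{F[w]}\ccM$ are $\cR_\cX$-coherent and strictly specializable along $x=0$ in the sense of \cite[Chap.~3]{Bibi01c}, the $V$-filtration on the Rees module being just $R_FV^\cbbullet\ccM$.

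Second---and this is the heart of the matter---I would identify the nearby cycles. For $\beta\in(-1,0]$, the $\cR$-module $\Psi_x^\beta R_F\ccM$ at the level of the underlying $\cO$-module is $R_F\psi_x^\beta\ccM$, and the monodromy filtration $\rM_\bbullet$ induced by the nilpotent part $\rN$ of $-\partial_tt$ is the Rees image of the Jacobson filtration of $\rN$ on $\psi_x^\beta\ccM$. Schmid's nilpotent orbit theorem and the $\mathrm{SL}_2$-orbit theorem (together with its complex generalization by Cattani--Kaplan--Schmid) say exactly that $(\psi_x^\beta\ccM,F^\cbbullet,k_\infty,\rN)$ is a polarized complex mixed Hodge structure of weight $w$ in the sense of \S\ref{subsec:Schmid}, where $k_\infty$ is the limit sesquilinear form extracted from~$k$. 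Applying the functor $\Tw$ of \S\ref{subsec:VTSVHS} term by term and using the final lemma of that subsection, we obtain that $(\gr_\ell^\rM\Psi_x^\beta\cT,\gr_{-2}^\rM\cN)$ is a graded polarized Lefschetz twistor structure of weight $w$ and type $-1$, with primitive part of weight $w+\ell$. The vanishing cycle statement follows by Definition \ref{def:vanishingcycles} applied to the polarized mixed Hodge structure $(\psi_x^0\ccM,F^\cbbullet,k_\infty,\rN)$, since for the minimal extension $\phi_x^{-1}\ccM=\rN\psi_x^0\ccM$ as filtered $\cD$-modules.

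Third, the sesquilinear pairing $k:\ccM\otimes_\CC\ov\ccM\to\Db_X$ transfers through the Rees construction to a pairing $R_Fk$ with values in distributions on $X\times\bS$ continuous in $\hb$, and the strictness of~$F$ with respect to~$V$ ensures the moderate growth needed for the axiom on $\cCS$ at $x=0$; compatibility with $\cS=((-\hb)^w,\hb^w)$ is immediate from the construction. Integrability is automatic, since the Rees grading globally defines the action of $\hb^2\partial_\hb$ and this action preserves $V^\cbbullet$ by the uniqueness of the Kashiwara--Malgrange filtration. The principal obstacle is the matching between the abstract specialization machinery of \cite[Chap.~3]{Bibi01c} applied to Rees modules and the classical Schmid limit mixed Hodge structure with its Cattani--Kaplan--Schmid polarization on the primitive parts; once this matching is secured, the remaining axioms reduce to mechanical verifications already contained in \cite[\S3.g]{Bibi05}.
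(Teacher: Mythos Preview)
The paper does not give an independent proof of this proposition: the trailing \qed\ signals that the result is imported wholesale from \cite[\S3.g]{Bibi05}, and the surrounding text (the paragraph before the proposition and the properties \eqref{eq:FVM}, \eqref{eq:tFVM} recalled just after) merely assembles the ingredients needed later. Your outline is faithful to what that reference actually does --- reduce to the origin, use Schmid's asymptotic results to get strictness of $F^\cbbullet$ against $V^\cbbullet$, identify $\Psi_x^\beta R_F\ccM$ with $R_F\psi_x^\beta\ccM$, and invoke the nilpotent/$\mathrm{SL}_2$-orbit theorem to exhibit the limit as a polarized complex mixed Hodge structure whose $\Tw$ furnishes the required graded Lefschetz twistor structure --- so there is no discrepancy of approach to report.

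One caution on presentation rather than substance: in this paper the statements you appeal to (Corollary~\ref{cor:psinilporb}, Corollary~\ref{cor:phinilporb}, the compatibility lemma after Definition~\ref{def:twnilporb}) are placed \emph{after} Proposition~\ref{prop:RFMtw} and are phrased as consequences of it, not as inputs. Logically they are equivalent reformulations of the same Schmid/Cattani--Kaplan--Schmid package, and in \cite{Bibi05} the order is the one you use; but if you are writing a proof to be inserted here, you should cite Schmid and \cite{Bibi05} directly rather than forward-reference \S\ref{subsec:nearbyvanishing}, to avoid an apparent circularity.
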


\begin{definition}\label{def:polHodge}
We will call such an object a \emph{polarized complex Hodge $\cD$-module of weight~$w$}.
\end{definition}

\subsection{Nearby and vanishing cycles}\label{subsec:nearbyvanishing}

We will set $F^pV^\beta\ccM\defin F^p\ccM\cap V^\beta\ccM$. In particular, for any $k\geq0$, $x^kF^pV^\beta\ccM\subset F^pV^{\beta+k}\ccM$.

Assume that $(\ccM,F^\cbbullet\ccM)$ underlies a polarized complex Hodge $\cD$-module (\cf Definition \ref{def:polHodge}), then (\cf \cite[\S3.2]{MSaito86} and \cite[\S3.d]{Bibi05})
\begin{equation}\label{eq:FVM}
\begin{split}
\forall\beta>-1,\;\forall p\in\ZZ,\quad F^pV^\beta\ccM&=j_*j^*F^p\cap V^\beta\ccM\\
F^p\ccM&=\sum_{j\geq0}\partial_x^jF^{p+j}V^{>-1}\ccM.
\end{split}
\end{equation}
In particular, as a consequence of the first line of \eqref{eq:FVM}, we have
\begin{equation}\label{eq:tFVM}
\forall\beta>-1,\,\forall p,\,\forall k\geq0,\quad x^kF^pV^\beta\ccM=F^pV^{\beta+k}\ccM.
\end{equation}
Moreover, $x\partial_x:\gr^\beta_V\ccM\to\gr^\beta_V\ccM$ strictly shifts the filtration $F^\cbbullet$ by $-1$. It is an isomorphism if $\beta\neq0$.

As a consequence of the results recalled in \S\ref{subsec:speinttame}, we find:

\begin{corollaire}\label{cor:psinilporb}
If $(\cT,\cS)$ is a polarized complex Hodge $\cD$-module of weight~$w$ then, for any $\beta\in(-1,0]$,
\begin{enumerate}
\item\label{cor:psinilporb1}
$\Psi_x^\beta\cT=(R_{F[w]}\psi_x^\beta\ccM,R_F\psi_x^\beta\ccM,R_F\psi_x^\beta k)$,
\item\label{cor:psinilporb2}
This equality is compatible with the natural actions of $\hb^2\partial_\hb$ on both terms, and therefore $\hb\partial_\hb$ acts on $\Psi_x^\beta\cT$,
\item\label{cor:psinilporb3}
$(\Psi_x^\beta\cT,\Psi_x^\beta\cS,\cN,\hb^2\partial_\hb)$ is a polarized complex mixed Hodge structure of weight~$w$.\qed
\end{enumerate}
\end{corollaire}

Let us notice that \ref{cor:psinilporb}\eqref{cor:psinilporb3} can be regarded as a reformulation of Theorem (6.16) in \cite{Schmid73}, and is similar to Corollary~1 of \cite{MSaito86} on a punctured disc. Notice also that, compared to \cite{MSaito86}, the choice of the behaviour of weights by taking nearby/vanishing cycles is not the same here and in \cite{Bibi01c}, as we are working with complex Hodge structures, and we can use Tate twists by half-integers (see also the vanishing cycles below, which also has to be compared with Corollary~1 of \cite{MSaito86}).

For vanishing cycles (\cf \cite[\S3.6.b]{Bibi01c}) we find:

\begin{corollaire}\label{cor:phinilporb}
For $(\cT,\cS)$ as above, $(\phi_x^{-1}\cT,\phi_x^{-1}\cS,\cN,\hb^2\partial_\hb-\hb)$ is a polarized complex mixed Hodge structure of weight~$w$.
\end{corollaire}

\begin{proof}[Sketch of proof]
According to \cite[Cor\ptbl4.1.17]{Bibi01c} (and to an easy consequence of \S4.2 of \loccit for the polarization), the object $(\phi_x^{-1}\cT(-1/2),\phi_x^{-1}\cS(-1/2),\cN)$ is isomorphic to the image of $\cN:\cT\to\cT(-1)$ and gives rise, after grading with respect to the monodromy filtration, to a graded Lefschetz twistor structure of weight $w+1$. In order that the morphism $\cCan$ of \cite[Lemma 3.6.21]{Bibi01c} is compatible with the action of~$\hb\partial_\hb$ (giving the grading), we should equip $\phi_x^{-1}\cT(-1/2)$ with the shifted naturally induced action $\hb\partial_\hb-1$. Then $(\phi_x^{-1}\cT(-1/2),\phi_x^{-1}\cS(-1/2),\cN)$ is isomorphic to $\Tw$ of the vanishing cycles (as defined in \ref{def:vanishingcycles}) of the polarized complex mixed Hodge structure $\Tw^{-1}(\Psi_x^\beta\cT,\Psi_x^\beta\cS,\cN)$. Applying a Tate twist $(1/2)$ we find, according to our convention on Tate twist (Remark \ref{rem:tateint}), that $(\phi_x^{-1}\cT,\phi_x^{-1}\cS,\cN,\hb^2\partial_\hb-\hb)$ is (isomorphic to $\Tw$ of) a polarized complex mixed Hodge structure of weight~$w$.
\end{proof}

\subsection{Exponential twist of an integrable twistor $\cD$-module}
Starting from a variation of polarized Hodge structures $(V,F^\cbbullet V)$ on $U\subset\Afu$, Proposition \ref{prop:RFMtw} produces a complex Hodge $\cD$-module $\cT=\big((R_{F[w]}\ccM,R_F\ccM,R_Fk),\cS\big)$. Localizing away from $\infty$ and taking global sections produces a filtered $\Clt$-module $(M,F^\cbbullet M)$ with a pairing taking values in tempered distributions on $\Afu$ depending continuously on $\hb\in\bS$.

As integrability is preserved by direct images (\cf \cite[Prop\ptbl7.1.4]{Bibi01c}), we can apply Theorem \ref{th:exptwHodge} together with Proposition \ref{prop:RFMtw}:

\begin{corollaire}\label{cor:FcT}
If $(\cT,\cS)$ is a polarized complex Hodge $\cD$-module of weight~$w$ on $\PP^1$, then $\cH^0a_+\FcT$ is an integrable polarized twistor structure of weight~$w$.\qed
\end{corollaire}

Although we did not give the precise definition of the direct image functor $a_+$ (\cf \cite[\S1.6.d]{Bibi01c} for more details), one can notice that, according to the strictness property of polarized twistor $\cD$-modules, the restriction to $\hb=1$ commutes with taking $a_+$. In other words, the vector space corresponding to the polarized pure twistor structure $\cH^0a_+\FcT$ is the cokernel of $\partial_t-1:M\to\nobreak M$.

\subsubsections{Another expression of the exponentially twisted de~Rham cohomology}\label{subsec:Fourierfiltered}
One can give a more explicit expression for $\cH^0a_+\FcT$ considered in Corollary \ref{cor:FcT}. We will recall it below. For simplicity, we will assume that $w=0$.

We can extend the correspondence of \S\ref{subsec:Laplacetr} to objects with a sesquilinear pairing as follows. Let $(M,F^\cbbullet M)$ be as in \S\ref{subsec:Laplacetr} and let us moreover assume that $M$ comes equipped with a $\Clt\otimes_\CC\ov{\Clt}$-linear pairing $k:M\otimes_\CC\ov M\to\cS'(\Afu)$ with values in the Schwartz space of temperate distributions on $\Afu$. To $(M,F^\cbbullet M,k)$ we associate a Hermitian twistor structure $(\cH',\cH',\cCS)$:
\begin{itemize}
\item
we set $\cH'=G_0^{(F),\an}$ (the analytization of the object defined by \eqref{eq:defG0F});
\item
composing $k$ with the Fourier transform of temperate distributions with kernel $e^{\ov{t\tau}-t\tau}\itwopi dt\wedge d\ov t$ induces, by restriction to $\bS\defin\{|\tau|=1\}=\{|\hb|=1\}$, a sesquilinear pairing $\cCS:\cHS'\otimes_{\cOS}\ovv{\cHS'}\to\cOS$.
\end{itemize}

Moreover, this twistor structure is integrable (by using the action of $t=\hb^2\partial_\hb$ on~$G_0^{(F)}$).

\begin{lemme}[{\cf \cite[Lemma 2.1 and \S2.c]{Bibi05}}]
The twistor structure $(\cH',\cH',\cCS)$ is the exponentially twisted de~Rham cohomology of the object $(R_FM,R_FM,R_Fk)$ of $\wtRTriples(\PP^1)$.\qed
\end{lemme}

In the case where $(M,F^\cbbullet M,k)$ comes from a polarized variation of Hodge structures of weight~$0$ on~$U$ as explained at the beginning of this paragraph, we get from Corollary \ref{cor:FcT}:

\begin{corollaire}[{\cf \cite[Cor\ptbl3.15]{Bibi05}}]\label{cor:intmfktw}
Under the previous assumption, the integrable twistor structure $(\cH',\cH',\cCS)$ associated to $(M,F^\cbbullet M,k)$ is pure of weight~$0$ and polarized.\qed
\end{corollaire}

\begin{remarque}\label{rem:U}
The previous description makes it clear how to compute the conjugacy class of the endomorphism $\cU$ of \S\ref{subsec:harmHiggs}: indeed, this is the conjugacy class of the restriction of $\hb^2\partial_\hb$ to $\cH'/\hb\cH'$. It is therefore equal to the conjugacy class of~$t$ acting on $G_0^{(F)}/\hb G_0^{(F)}$. Its eigenvalues are the singular points of $M$ (at finite distance). Therefore, in general, it is not a multiple of $\id$, and the integrable twistor structure $(\cH',\cH',\cCS)$ does not correspond in the usual way to a polarized Hodge structure.
\end{remarque}

\subsection{Fourier-Laplace transformation of variations of polarized Hodge structures}\label{subsec:FLVHS}

We will make explicit the behaviour of the functor $\Tw$ under Laplace transform. We will work with the associated $\Clt$-modules.

Let $(M,F^\cbbullet M)$ be a regular holonomic $\Clt$-module with good filtration. Let us consider the Rees module $R_FM$, which is a $\CC[t,\hb]\langle\partiall_t\rangle$-module, and its Laplace transform $\wh{R_FM}$, which is a $\CC[\tau,\hb]\langle\partiall_\tau\rangle$-module. Recall that $\wh{R_FM}=R_FM$ as a $\CC[\hb]$-module and that~$\tau$ acts as~$\partiall_t$ and~$\partiall_\tau$ as $-t$. Notice that $\wh{R_FM}$ can be obtained as the cokernel of
\[
\CC[\tau]\otimes_\CC R_FM\To{\partiall_t-\tau}\CC[\tau]\otimes_\CC R_FM
\]
by the map $\sum_{k\geq0}\tau^k\otimes m_k\mto\sum_{k\geq0}\partiall_t^km_k$, and the natural action of $\CC[\tau,\hb]\langle\partiall_\tau\rangle$, as well as the action of $\hb^2\partial_\hb$, are obtained by conjugating the usual actions by $\cE^{-t\tau/\hb}$. In particular, the action of $\hb\partial_\hb$ on $\wh{R_FM}$ coming from the identification with $R_FM$ and which gives the grading of this Rees module corresponds to the action denoted $\hb\partial_\hb\otimes 1$ in Lemma \ref{lem:hbdhbtdt}, and the natural action of $\hb^2\partial_\hb$ is that given by this lemma.

Let $G_0^{(F)}$ be the Brieskorn lattice of the filtration $F^\cbbullet M$ (\cf\eqref{eq:defG0F}), that we will denote by $G_0$ for short.

We will be mainly concerned with $\wh{R_FM}_\loc\defin\CC[\tau,\tau^{-1},\hb]\otimes_{\CC[\tau,\hb]}\wh{R_FM}$. Recall (\cf \cite[Lemma 2.1]{Bibi05}) that $\wh{R_FM}_\loc\simeq\CC[\tau,\tau^{-1}]\otimes_\CC G_0$, where, on the right-hand side, the $\CC[\tau,\tau^{-1},\hb]\langle\partiall_\tau\rangle$-action is given as follows:
\begin{itemize}
\item
the $\CC[\tau,\tau^{-1}]$-structure is the natural one,
\item
the action of~$\hb$ is by $\tau\otimes\partial_t^{-1}$,
\item
the action of~$\partiall_\tau$ is by $\hb\cdot (\partial_\tau\otimes 1)-1\otimes t$.
\end{itemize}

Recall that $G\defin\wh M[\partial_t^{-1}]$ is a $\CC[\partial_t,\partial_t^{-1}]$-module with connection. In the following we will use the notation $\theta=\partial_t$, $\theta'=\partial_t^{-1}$ (with the identification above, $\theta'=\hb\tau'$ with $\tau'=\tau^{-1}$). We will denote by $V_\theta^\cbbullet G$ the $V$-filtration of~$G$ at $\partial_t=0$ and we will set $\psi_\theta^\gamma G\defin \gr_{V_\theta}^\gamma G$. Similarly, we denote by $V_{\theta'}^\cbbullet G$ the $V$-filtration at $\partial_t^{-1}=0$ and we set $\psi_{\theta'}^{0,\gamma}G\defin\gr_{V_{\theta'}}^\gamma G$. For any $c\in\CC$, we also set $\psi_{\theta'}^{c/\theta',\gamma}G\defin\gr_{V_{\theta'}}^\gamma (G\otimes\ccE^{-c/\theta'})$. As $G$ has a regular singularity at $\theta=0$, each $V_\theta^\gamma G$ is $\CC[\theta]$-free of finite type, while, as the singularity at $\theta'=0$ is usually irregular, each $V_{\theta'}^\gamma G$ has finite type over $\CC[\theta']\langle\theta'\partial_{\theta'}\rangle$. If the $G_i^\wedge$ are the regular formal modules entering in the decomposition analogous to \eqref{eq:Levelt-Turrittin} for $G^\wedge$, and if $i_0$ is the index $i$ such that $c_i=0$, then $\gr_{V_{\theta'}}^\gamma G=\gr_{V_{\theta'}}^\gamma G_{i_0}^\wedge$.

\begin{lemme}\label{lem:calculVg}
The $\CC[\tau,\tau^{-1},\hb]\langle\partiall_\tau\rangle$-module $\wh{R_FM}_\loc$ is strictly specializable at $\tau=0$ and $\tau=\infty$. The $V$-filtration is given by
\begin{align*}
V_\tau^\gamma\wh{R_FM}_\loc&=\tbigoplus_{k\in\ZZ}\tau^k\otimes(G_0\cap V_\theta^{\gamma-k}G)\simeq R_{G^{(F)}}V_\theta^\gamma G,\tag*{$(*)_\infty$}\\
V_{\tau'}^\gamma\wh{R_FM}_\loc&=\tbigoplus_{k\in\ZZ}\tau^{\prime k}\otimes(G_0\cap V_{\theta'}^{\gamma-k}G)\simeq R_{G^{(F)}}V_{\theta'}^\gamma G.\tag*{$(*)_0$}
\end{align*}
\end{lemme}

\begin{proof}
Let us denote by $U_\tau^\gamma\wh{R_FM}_\loc$ the right-hand side in $(*)_\infty$. Let us set $G_k=\theta^{\prime-k}G_0\subset G$. This is an increasing filtration of $G$ by $\CC[\theta']$-submodules. Let us fix $\gamma\in\RR$. Then, for $k\ll0$, we have $G_k\cap V_\theta^\gamma G=\{0\}$ and, for $k\gg0$, $G_k\cap V_\theta^\gamma G=G_{k-1}\cap V_\theta^\gamma G+G_k\cap V_\theta^{\gamma+1} G$ (the last equality expresses that $\nu_\alpha(G_0)=0$ for $\alpha\ll0$, \cf \cite{Bibi96bb}). If we consider $G_\bbullet\cap V_\theta^\gamma G$ as a filtration of the $\CC[\theta]$-module $V_\theta^\gamma G$ compatible with the filtration $\deg_\bbullet\CC[\theta]$ by the degree in $\theta$, these two properties are equivalent to saying that $G_\bbullet\cap V_\theta^\gamma G$ is a \emph{good} filtration, or equivalently that the Rees module $R_{G^{(F)}}V_\theta^\gamma G\defin \bigoplus_{k\in\ZZ}(G_k\cap V_\theta^\gamma G)\hb^k$ is a $R_{\deg}\CC[\theta]$-module of finite type. According to the definition of the action of~$\hb$ above, we identify $R_{\deg}\CC[\theta]$ with $\CC[\tau,\hb]$ and $R_{G^{(F)}}V_\theta^\gamma G$ with $U_\tau^\gamma\wh{R_FM}_\loc$, hence the finiteness of $U_\tau^\gamma\wh{R_FM}_\loc$ over $\CC[\tau,\hb]$.

Moreover, we get in the same way an identification of $\gr_{U_\tau}^\gamma\wh{R_FM}_\loc$ with the Rees module $R_{G^{(F)}}\gr_{V_\theta}^\gamma G$. In particular it is $\CC[\hb]$-free of finite rank, hence the strictness property.

As $tV_\theta^\gamma G\subset \partial_t^{-1}V_\theta^\gamma G=V_\theta^{\gamma-1}G$, we have
\[
\tau\partiall_\tau (1\otimes [G_0\cap V_\theta^\gamma G])=-\tau\otimes t[G_0\cap V_\theta^\gamma G]\subset \tau\otimes[G_0\cap V_\theta^{\gamma-1}G],
\]
showing that $U_\tau^\gamma\wh{R_FM}_\loc$ is stable by $\tau\partiall_\tau$. Similarly, one shows that, for $N\gg0$, $(\tau\partiall_\tau-\gamma\hb)^NU_\tau^\gamma\wh{R_FM}_\loc\subset U_\tau^{>\gamma}\wh{R_FM}_\loc$. This gives $(*)_\infty$ (\cf \cite[Lemma 3.3.4]{Bibi01c}).

For $(*)_0$, the argument is similar. It is easy to check that $R_{G^{(F)}}V_{\theta'}^\gamma G$ is a $\CC[\tau',\hb]\langle\tau'\partiall_{\tau'}\rangle$-module, and that $\tau'\partiall_{\tau'}-\gamma\hb$ is nilpotent on $R_{G^{(F)}}\gr_{V_{\theta'}}^\gamma G$, which has no $\CC[\hb]$-torsion by definition. The only new point is to check that $R_{G^{(F)}}V_{\theta'}^\gamma G$ has finite type over $\CC[\tau',\hb]\langle\tau'\partiall_{\tau'}\rangle$.

Let $k_0$ be such that $G_{k_0}\subset V^\gamma_{\theta'}G$ and, for any $k\geq k_0$, let $\bme_k$ be a finite system of $\CC[\theta']$-generators of $G_k\cap V_{\theta'}^\gamma G$. Recalling that $\theta'$ acts as $\hb\tau'$ on $R_{G^{(F)}}V_{\theta'}^\gamma G$, we find that, for any $k_1\geq k_0$, $\bigoplus_{k\leq k_1}\hb^k(G_k\cap\nobreak V_{\theta'}^\gamma G)$ is contained in the $\CC[\tau',\hb]$-submodule of $R_{G^{(F)}}V_{\theta'}^\gamma G$ generated by the $\hb^j\bme_j$, $j=k_0,\dots,k_1$.

On the other hand, the formula for the action of~$\partiall_\tau$ given above implies that $\tau'\partiall_{\tau'}$ acts on $R_{G^{(F)}}V_{\theta'}^\gamma G$ by $\hb\cdot(\theta'\partial_{\theta'}+k)$ on $\hb^k(G_k\cap V_{\theta'}^\gamma G)$. We will show that, for $k_1$ large enough and any $k\geq k_1$, $\hb^k(G_k\cap V_{\theta'}^\gamma G)$ is contained in the $\CC[\tau',\hb]\langle\tau'\partiall_{\tau'}\rangle$-module generated by $\hb^{k_1}\bme_{k_1}$.

We claim that
\begin{enumeratea}
\item\label{claim:a}
There exists $k_1$ such that, for any $k\geq k_1$,
\[
G_1\cap V^{\gamma+k}_{\theta'}G=\theta'\partial_{\theta'}(G_0\cap V^{\gamma+k}_{\theta'}G)+G_0\cap V^{\gamma+k}_{\theta'}G.
\]
\end{enumeratea}

Note that this is equivalent to $G_{k+1}\cap V^\gamma_{\theta'}G=\theta'\partial_{\theta'}(G_k\cap V^\gamma_{\theta'}G)+G_k\cap V^\gamma_{\theta'}G$. If $k=k_1$, such an equality implies $\hb^{k_1+1}(G_{k_1+1}\cap V_{\theta'}^\gamma G)\subset(\CC[\tau',\hb]+\CC[\tau',\hb]\tau'\partiall_{\tau'})\hb^{k_1}\bme_{k_1}$. Iterating the argument gives the desired inclusion.

We will prove Claim \eqref{claim:a} by working at the formal level. As it is clearly true away from $\theta'=0$, it is enough to prove \eqref{claim:a}$^\wedge$, that is, \eqref{claim:a} after tensoring with $\CC\lcr\theta'\rcr$.

Firstly, by uniqueness of the $V_{\theta'}^\cbbullet$-filtration, we have $(V_{\theta'}^\gamma G)^\wedge=V_{\theta'}^\gamma(G^\wedge)$. Moreover, $(V_{\theta'}^\gamma G\cap G_0)^\wedge=V_{\theta'}^\gamma G)^\wedge\cap G_0^\wedge$ in $G^\wedge$ [indeed, use that this is clearly true for $+$ instead of $\cap$ and that $\CC\lcr\theta'\rcr$ is flat over $\CC[\theta']$, and apply this to $(V_{\theta'}^\gamma G+G_0)^\wedge/G_0^\wedge=(V_{\theta'}^\gamma G)^\wedge/(V_{\theta'}^\gamma G\cap G_0)^\wedge$]. Therefore, it is enough to prove \eqref{claim:a}$^\wedge$.

Notice now that Claim \eqref{claim:a} is equivalent to
\begin{enumeratea}\refstepcounter{enumi}
\item\label{claim:b}
There exists $k_1$ such that, for any $k\geq k_1$,
\[
\theta^{\prime2}\partial_{\theta'}:V^{\gamma+k}_{\theta'}(G_0/\theta'G_0)\to V^{\gamma+k+1}_{\theta'}(G_0/\theta'G_0)
\]
is onto.
\end{enumeratea}
Similarly, \eqref{claim:a}$^\wedge$ is equivalent to \eqref{claim:b}$^\wedge$. Recall that $G^\wedge$ decomposes as $G^\wedge_\reg\oplus G^\wedge_\irr$ and that $V_{\theta'}^\gamma G^\wedge$ and $G_0$ decompose correspondingly. It is thus enough to prove \eqref{claim:b}$^\wedge$ on each term. On the regular part, there exists~$k_1$ such that $V^{\gamma+k_1}_{\theta'}(G_{\reg,0}^\wedge/\theta'G_{\reg,0}^\wedge)=0$ (because $V^{\gamma+k}_{\theta'}G^\wedge_\reg$ has finite type over $\CC\lcr\theta'\rcr$), hence both terms are~$0$ in \eqref{claim:b}$^\wedge$. On the purely irregular part, $V_{\theta'}^\gamma G^\wedge_\irr=G^\wedge_\irr$ for any $\gamma$, and $\theta^{\prime2}\partial_\theta'$ does not have the eigenvalue~$0$ on $G_{\irr,0}^\wedge/\theta'G_{\irr,0}^\wedge$ (\cf Remark \ref{rem:U}), hence it is onto.
\end{proof}

For the remaining of this section, we assume that $(M,F^\cbbullet M)$ is also equipped with a sesquilinear pairing $k$ such that $(M,F^\cbbullet M,k)$ comes from a polarized complex Hodge $\cD$-module on $\PP^1$.

\begin{corollaire}\label{cor:psiHodgeinfty}
$\prod_{\beta\in(-1,0]}\prod_{\ell\in\ZZ} \SP^\infty_{\gr^\rM_\ell\Psi_\tau^\beta\wh{R_FM}}(T)=\SP^\infty_{G_0^{(F)}}(T)$.
\end{corollaire}

\begin{proof}
For $\beta\in(-1,0]$, let $G^{(F),\cbbullet}\psi_\theta^\beta G$ be the filtration naturally induced by $G^{(F),\cbbullet}$. As a consequence of the identification $V_\tau^\gamma\wh{R_FM}_\loc$ with $R_{G^{(F)}}V_\theta^\gamma G$, we get, for any $\beta\in(-1,0]$,
\begin{equation}\label{eq:psiHodgeinfty}
\Psi_\tau^\beta\wh{R_FM}=R_{G^{(F)}}\psi_\theta^\beta G.
\end{equation}
Note however that the natural action of $\hb^2\partial_\hb$ on the left-hand side differs by $\tau\partiall_\tau$ from the action on the right-hand side defined from the~$\hb$-grading (\cf Lemma \ref{lem:hbdhbtdt}). Nevertheless, by our assumption on $(M,F^\cbbullet M)$, the graded pieces of $\Psi_\tau^\beta\wh{R_FM}$ with respect to the monodromy filtration are strict (\ie $\CC[\hb]$-free), and at the level of $\gr^\rM_\bbullet$, $\hb^2\partial_\hb$ on the left-hand side differs by $\beta\hb$ from the action on the right-hand side. Because of freeness and uniqueness of the monodromy filtration, we have
\[
\gr^\rM_\ell\Psi_\tau^\beta\wh{R_FM}=R_{G^{(F)}}\gr_\ell^\rM\psi_\theta^\beta G.
\]
On the other hand, as the action of $\partial_\hb$ has a simple pole on $R_{G^{(F)}}\psi_\theta^\beta G$, we can apply Remark \ref{rem:modifnabla} to get
\[
\prod_{\ell\in\ZZ}\SP^\infty_{\gr_\ell^\rM\Psi_\tau^\beta\wh{R_FM}}(T+\beta)=\prod_{\ell\in\ZZ}\SP^\infty_{R_{G^{(F)}}\gr_\ell^\rM\psi_\theta^\beta G}(T)=\SP^\infty_{R_{G^{(F)}}\psi_\theta^\beta G}(T).
\]
Recall that, if we set
\[
\nu_{\beta,p}=\dim \frac{V_\theta^\beta G\cap G^{(F),p}}{V_\theta^{>\beta} G\cap G^{(F),p}+V_\theta^\beta G\cap G^{(F),p+1}}.
\]
we have, as in the proof of Lemma \ref{lem:SP=Susy}, and by Definition \ref{def:spinfty} and \eqref{eq:nugamma},
\begin{align*}
\SP^\infty_{R_{G^{(F)}}\psi_\theta^\beta G}(T)&=\prod_{p\in\ZZ}(T-p)^{\nu_{\beta,p}}\\
\tag*{and}
\SP^\infty_{G_0^{(F)}}(T)&=\prod_{\beta\in(-1,0]}\prod_{p\in\ZZ}(T-\beta-p)^{\nu_{\beta,p}}.
\end{align*}
This gives the desired equality.
\end{proof}

We now consider the specialization at $\partial_t^{-1}=0$. Let $G_i^\wedge$ be the formal microlocalized module attached to $M$ at $-c_i$. Let $p_i$ be such that $F^{p_i}\ccM$ generates $\ccM$ as a $\cD$-module near $-c_i$, let $G_{i,0}^{(F^{p_i})}$ be the saturation by $\theta'\defin\partial_t^{-1}$ of the image of $F^{p_i}\ccM$ in $G_i^\wedge$ (by tensoring with formal microlocal differential operators of order zero), and let us set $G_{i,0}^{(F)}=\theta^{\prime-p_i}G_{i,0}^{(F_{p_i})}$, which is independent of the generating index $p_i$ (\cf \S\ref{subsec:Laplacetr}). Then it is known (\cf \eg \cite[Prop\ptbl V.3.6]{Bibi00}) that the Levelt-Turrittin decomposition \eqref{eq:Levelt-Turrittin} for $G_0^{(F)}$ has components $\cH'_i=G_{i,0}^{(F)}$. We have $G^{(F),\cbbullet}_i\psi_{\theta'}^\gamma G_i^\wedge=G^{(F),\cbbullet}\psi_{\theta'}^{c_i/\theta',\gamma}G$.

\begin{corollaire}\label{cor:psiHodge0}
For any $i$,
\[
\prod_{\beta\in(-1,0]}\prod_{\ell\in\ZZ} \SP^0_{\gr^\rM_\ell\Psi_{\tau'}^{c_i/\tau',\beta}\wh{R_FM}}(T)=\SP^0_{G_{i,0}^{(F)}}(T).
\]
\end{corollaire}

\begin{proof}
We will show the result for $c_i=0$. The same argument applies for any $c_i$ after twisting by $\ccE^{-c_i/\theta'}$ or $\cE^{-c_i/\hb\tau'}$. We denote by $i_0$ the index $i$ such that $c_{i_0}=0$.

From $(*)_0$ we get, for any $\beta\in(-1,0]$,
\[
\Psi_{\tau'}^{0,\beta}\wh{R_FM}=R_{G^{(F)}}\psi_{\theta'}^{0,\beta}G=R_{G_{i_0}^{(F)}}\psi_{\theta'}^\beta G_{i_0}^\wedge,
\]
and this equality is compatible with the action of $\hb^2\partial_\hb-\tau'\partiall_{\tau'}$ on the left-hand side and that of $\hb^2\partial_\hb$ on the right-hand side (\cf Lemma \ref{lem:hbdhbtdt}). By our assumption on $(M,F^\cbbullet M)$, we know from Appendix \ref{sec:appendiceA} that the graded pieces of $\Psi_{\tau'}^{c_{i_0}/\tau',\beta}\wh{R_FM}$ with respect to the monodromy filtration are strict (\ie $\CC[\hb]$-free). The same property holds for the right-hand side above, and going to the graded pieces, we find that the equality holds with $\hb^2\partial_\hb$ action on the left-hand side shifted by $-\beta\hb$. Arguing as for Corollary \ref{cor:psiHodgeinfty}, we find
\[
\prod_{\ell\in\ZZ}\SP^0_{\gr_\ell^\rM\Psi_{\tau'}^{0,\beta}\wh{R_FM}}(T-\beta)=\SP^0_{R_{G_{i_0}^{(F)}}\psi_{\theta'}^\beta G_{i_0}^\wedge}(T).
\]
Setting now
\[
\mu_{i_0,\beta,p}=\dim \frac{V_{\theta'}^\beta G_{i_0}^\wedge\cap G_{i_0}^{(F),p}}{V_{\theta'}^{>\beta} G_{i_0}^\wedge\cap G_{i_0}^{(F),p}+V_{\theta'}^\beta G_{i_0}^\wedge\cap G_{i_0}^{(F),p+1}},
\]
we have
\begin{align*}
\SP^0_{R_{G_{i_0}^{(F)}}\psi_{\theta'}^\beta G_{i_0}^\wedge}(T)&=\prod_{p\in\ZZ}(T-p)^{\mu_{i_0,\beta,p}}\\
\tag*{and}
\SP^0_{G_{i_0,0}^{(F)}}(T)&=\prod_{\beta\in(-1,0]}\prod_{p\in\ZZ}(T+\beta-p)^{\mu_{i_0,\beta,p}},
\end{align*}
hence the result.
\end{proof}

\section{Deligne's filtration}\label{sec:Delignefilt}
In this section, we will be concerned with the first point considered in the introduction. Let us consider the setting of \S\ref{subsec:Laplacetr}, that is, a holonomic $\Clt$-module equipped with a good filtration $F^\cbbullet M$. Recall that $\wt\ccM$ denotes the associated $\cD_{\PP^1}(*\infty)$-module with connection and $\ccM$ denotes its minimal extension across $\infty$. We will now assume that $M$ has only regular singularities at finite distance and at infinity.

We will keep the notation of \S\ref{subsec:Laplacetr}, but we will simply denote by $G_0$ the $\CC[\partial_t^{-1}]$-module $G_0^{(F)}$ defined by \eqref{eq:defG0F}. The spectral polynomial $\SP^\infty_{G_0}(T)$ is determined as soon as we determine the number $\nu_\gamma(G_0)$ for any $\gamma\in\RR$.

In \S\ref{subsec:DelignefiltrdR}, we will define Deligne's filtration $F_\Del^\cbbullet$ (indexed by $\RR$) on $\ccM\otimes\ccE^{-t}$, then on the corresponding de~Rham complex, and then on its hypercohomology. The main result of this section will be:

\begin{theoreme}\label{th:Deligne}
Assume that $(M,F^\cbbullet M)$ underlies a polarized complex Hodge $\cD$-module (\cf Definition \ref{def:polHodge}). Then,
\begin{enumerate}
\item\label{th:Deligne1}
the spectral sequence associated to the hypercohomology of the filtered de~Rham complex $F_\Del^\cbbullet\DR(\wt\ccM\otimes\ccE^{-t})$ on $\PP^1$ degenerates at $E_1$;
\item\label{th:Deligne2}
for any $\gamma\in\RR$, $\nu_\gamma(G_0)=\dim\HH^1\big(\PP^1,\gr_{F_\Del}^{\gamma+1}\DR(\wt\ccM\otimes\ccE^{-t})\big)$.
\end{enumerate}
\end{theoreme}

Let us remark that the $\cO_{\PP^1}$-coherent sheaf $\gr_{F_\Del}^\gamma(\wt\ccM\otimes\ccE^{-t})$ is supported at infinity if $\gamma\not\in\ZZ$.

\subsection{Laplace transform}
We denote by $\wh\PP^1$ the projective line with coordinates $\theta,\theta'$ (that we do not denote by $\tau,\tau'$ as above at the moment) and by $\Afuh$ its chart with coordinate~$\theta$. Recall that $G=\CC[\theta',\theta^{\prime-1}]\otimes_{\CC[\theta']}G_0$ ($\theta'=\partial_t^{-1}$ as above) is equipped with a connection having a regular singularity at $\theta=0$, defined as the multiplication by $-t$. We denote by $V_\theta^\cbbullet G$ the corresponding $V$-filtration, that we assume to be indexed by $\RR$ (this assumption is implied by the assumption in Theorem \ref{th:Deligne} that $(M,F^\cbbullet M)$ underlies a polarized complex Hodge $\cD$-module).

The exponentially twisted de~Rham complex \eqref{eq:dRexptwist} is quasi-isomorphic to
\[
0\to G\To{\partial_t-1}G\to0,
\]
\enlargethispage{1.5\baselineskip}%
which is quasi-isomorphic to
\[
0\to G\To{\partial_t^{-1}-1}G\to0,
\]
which in turn is quasi-isomorphic to
\[
0\to G_0\To{\partial_t^{-1}-1}G_0\to0.
\]
In other words, the hypercohomology $H^1_{\DR}(\PP^1,\ccM\otimes\ccE^{-t})$ is identified with the fibre at $\theta'=1$ of the free $\CC[\theta']$-module $G_0$.

The $V$-filtration $V_\theta^\cbbullet G$ enables one to define, in a natural way, a filtration $V^\cbbullet H^1_{\DR}(\PP^1,\ccM\otimes\ccE^{-t})$ by setting, for any $\gamma\in\RR$,
\begin{align*}
V^\gamma H^1_{\DR}(\PP^1,\ccM\otimes\ccE^{-t})&=V^\gamma(G_0/(\theta'-1)G_0)\\
&\defin\image\big[G_0\cap V_\theta^\gamma G\rightarrow G_0/(\theta'-1)G_0\big].
\end{align*}
According to \eqref{eq:nugamma}, we have
\begin{equation}\label{eq:nugammaDR}
\nu_\gamma(G_0)=\dim\gr_V^\gamma H^1_{\DR}(\PP^1,\ccM\otimes\ccE^{-t}).
\end{equation}

\begin{exemple}\label{ex:alpha}
Let us consider the case where $M=\Clt/(t\partial_t-\alpha)$ for some $\alpha\in(0,1)$. We regard $M$ as corresponding to a variation of Hodge structure $V$ of type $(0,0)$ on $\Afu\moins\{0\}$ with filtration $F^\cbbullet V$ given by $F^0V=V$ and $F^1V=0$. Let $V_t^\cbbullet M$ denote the $V$-filtration of $M$ at $t=0$. Then we set (\cf \eqref{eq:FVM}) $F^0M=V_t^{>-1}M$, $F^1M=0$ and, for $\ell\geq0$,
$F^{-\ell}M\defin V_t^{>-1}M+\cdots+\partial_t^\ell V_t^{>-1}M=V_t^{>-\ell-1}M$.

Denoting by $[\cdot]$ the class in $M$, we have $[1]\in V_t^\alpha M$, and $F^0M=\CC[t]\cdot[\partial_t]$. We also have $\wh M=G=\Cltheta/(\partial_\theta\theta+\alpha)$ which is free of rank one over $\CC[\theta,\theta^{-1}]$, and $G_0^{(F)}=\CC[\theta^{-1}]\cdot[\theta]$. As $[\theta]$ is in $V^{-\alpha}_\theta G$, we finally get
\[
\nu_\gamma(G_0^{(F)})=
\begin{cases}
1&\text{if }\gamma=-\alpha,\\
0&\text{otherwise}.
\end{cases}
\]
\end{exemple}

Let us consider the $V$-filtration $V^\cbbullet\wh{R_FM}_\loc$ (\cf Lemma \ref{lem:calculVg}). Notice that, for any $\gamma\in\RR$, the multiplication by $\tau-\hb$ is injective on $V^\gamma\wh{R_FM}_\loc$. Indeed, let us use as in Lemma \ref{lem:calculVg} the identification $V^\gamma\wh{R_FM}_\loc=R_{G^{(F)}}V_\theta^\gamma G$. Then, the localization with respect to~$\hb$ gives $\CC[\hb,\hbm]\otimes_\CC V_\theta^\gamma G$, where the action of~$\tau$ is induced by $\hb\otimes\theta$. In particular, it is $\CC[\tau,\hb,\hbm]$-free and the multiplication by $\tau-\hb$ is injective on this module. Therefore, so is the multiplication by $\tau-\hb$ on the $\CC[\tau,\hb]$-submodule $R_{G^{(F)}}V_\theta^\gamma G$. We will compute its cokernel $V^\gamma\wh{R_FM}_\loc/(\tau-\hb)V^\gamma\wh{R_FM}_\loc$.

Recall (\cf \cite[Def\ptbl B.1]{D-S02a}) that a $V$-solution to the Birkhoff problem for $G_0$ is a free $\CC[\theta]$-submodule $G^{\prime0}$ of $G$, which is stable by $\theta\partial_\theta$, which generates $G$ over $\CC[\theta,\theta^{-1}]$, and such that, for any $\gamma\in\RR$,
\begin{equation}\label{eq:Vsol}
G_0\cap V_\theta^\gamma G=\tbigoplus_{j\geq0}\theta^{\prime j}(G_0\cap G^{\prime0}\cap V_\theta^{\gamma+j}G).
\end{equation}
(Each term in the sum, as well as $G_0\cap G^{\prime0}$ and $G_0\cap V_\theta^\gamma G$, is a finite dimensional $\CC$-vector space, and the sum is finite, as $G_0\cap V_\theta^{\gamma+j}G=0$ for $j\gg0$; moreover, $G^{\prime0}\subset V_\theta^\gamma G$ for $\gamma\ll0$.) By definition of a solution to Birkhoff's problem, a $\CC$-basis $G_0\cap G^{\prime0}$ is a $\CC[\theta']$-basis of $G_0$; therefore, the natural morphism $G_0\cap G^{\prime0}\to G_0/(\theta'-1)G_0$ is an isomorphism.

\begin{lemme}\label{lem:tau-hb}
If the Birkhoff problem for $G_0$ has a $V$-solution $G^{\prime0}$, then, for any $\gamma\in\RR$, $V^\gamma\wh{R_FM}_\loc/(\tau-\hb)V^\gamma\wh{R_FM}_\loc$ is identified with the Rees module of the filtration $V^{\gamma+\cbbullet}H^1_{\DR}(\PP^1,\ccM\otimes\ccE^{-t})$.
\end{lemme}

\begin{proof}
We note that $(*)_\infty$ in Lemma \ref{lem:calculVg} gives (as $\hb=\tau\otimes\theta'$):
\begin{multline*}
V^\gamma\wh{R_FM}_\loc/(\tau-\hb)V^\gamma\wh{R_FM}_\loc\\
=\tbigoplus_k\tau^k\otimes\Big[(G_0\cap V_\theta^{\gamma-k}G)/(\theta'-1)(G_0\cap V_\theta^{\gamma-k+1}G)\Big].
\end{multline*}
As $G^{\prime0}$ is a $V$-solution, the natural inclusion
\[
(\theta'-1)(G_0\cap V_\theta^{\gamma+1}G)\subset[(\theta'-1)G_0]\cap (G_0\cap V_\theta^{\gamma}G)
\]
is an equality for any $\gamma\in\RR$: indeed, since $G_0=\bigcup_{\ell\in\ZZ}(G_0\cap\nobreak V_\theta^{\gamma-\ell}G)$, an element in the RHS can be written both as a polynomial $(\theta'-\nobreak1)\sum_{j\geq0}a_j\theta^{\prime j}$ with $a_j\in G_0\cap\nobreak G^{\prime0}\cap\nobreak V_\theta^{\gamma-\ell+j}G$ for some fixed $\ell\in\ZZ$, and as a polynomial $\sum_{j\geq0}b_j\theta^{\prime j}$ with $b_j\in G_0\cap G^{\prime0}\cap V_\theta^{\gamma+j}G$, according to \eqref{eq:Vsol}; the assertion follows by considering the term of highest degree with respect to $\theta'$ and by a straightforward induction. As a consequence,
\[
(G_0\cap V_\theta^\gamma G)/(\theta'-1)(G_0\cap V_\theta^{\gamma+1}G)
\]
is equal to the image of $G_0\cap V_\theta^{\gamma}G$ in $G_0/(\theta'-1)G_0$ for any $\gamma$. The result follows.
\end{proof}

The previous proof also shows that the morphism $G_0\cap\nobreak V_\theta^\gamma G\to G_0/(\theta'-\nobreak1)G_0$ induces an isomorphism
\[
G_0\cap G^{\prime0}\cap V_\theta^\gamma G\isom V^\gamma(G_0/(\theta'-1)G_0)\simeq V^\gamma H^1_{\DR}(\PP^1,\ccM\otimes\ccE^{-t}).
\]
As a consequence of the lemma, for any $\beta\in(-1,0]$,
\begin{equation}\label{eq:VRF}
V^\beta\wh{R_FM}_\loc/(\tau-\hb)V^\beta\wh{R_FM}_\loc=R_{V^{\beta+\cbbullet}}H^1_{\DR}(\PP^1,\ccM\otimes\ccE^{-t}).
\end{equation}

\begin{lemme}\label{lem:HodgeVsol}
If $(M,F_\bbullet M)$ underlies a polarizable complex Hodge $\cD$-module, then the Birkhoff problem for the Brieskorn lattice of $(M,F_\bbullet M)$ has a $V$-solution.
\end{lemme}

\begin{proof}
We follow the argument of \cite[Lemma 2.8]{MSaito89}. According to \cite[Prop\ptbl B.3(1)]{D-S02a}, giving a $V$-solution to the Brieskorn problem for $G_0^{(F)}$ is equivalent to giving, for any $\beta\in(-1,0]$, a filtration of $\psi_\theta^\beta G$ which is opposite to $G^{(F),\cbbullet}\psi_\theta^\beta G$ and which is stable by the nilpotent operator~$\rN_\theta$ induced by $-(\theta\partial_\theta-\beta)$. According to \eqref{eq:psiHodgeinfty} and Corollary \ref{cor:psinilporb}\eqref{cor:psinilporb3}, $G^{(F),\cbbullet}\psi_\theta^\beta G$ is the Hodge filtration of a polarized complex mixed Hodge structure for which the nilpotent endomorphism is a nonzero multiple of $\rN_\theta$. Remark \ref{rem:Ipq} gives then a convenient opposite filtration.
\end{proof}

\Subsection{Deligne's filtration on the exponentially twisted $\cD_X$-module}\label{subsec:Delignefiltr}
In this subsection, we denote by~$X$ an open disc in~$\CC$ with a coordinate $x$ centered at its origin. Let $\ccM$ be a regular holonomic $\cD_X$-module equipped with a good filtration $F_\bbullet\ccM$. We will make the following assumptions:
\begin{enumerate}
\item
$\ccM$ has a singularity at $x=0$ at most and is the minimal extension of its localized module $\wt\ccM\defin\cO_X[1/x]\otimes_{\cO_X}\ccM$.
\item
The eigenvalues of the monodromy of the local system \hbox{$\ker\big[\partial_x:\ccM_{|X^*}\to\ccM_{|X^*}\big]$} have an absolute value equal to~$1$. (Hence, the decreasing Kashiwara-Malgrange filtration $V^\cbbullet\ccM$ of $\ccM$ at the origin is indexed by a finite set of real numbers translated by $\ZZ$.)
\end{enumerate}

We denote by $\ccM\otimes\ccE^{-1/x}$ the $\cO_X[1/x]$-module $\wt\ccM$ equipped with the twisted connection $\nabla-d(1/x)$ (\ie if $e$ is the generator of the rank one $\cO_X[1/x]$-module $\ccE^{-1/x}$, we have $\partial_x(e\otimes m)=e\otimes((\partial_x+x^{-2})m)$). For any $\gamma\in\RR$, we denote by $\lgamma$ the smallest integer $\geq\gamma$, so that $\gamma-\lgamma\in(-1,0]$. Deligne's filtration is defined for $\gamma\in\RR$ by:
\[
F_\Del^{\gamma}(\wt\ccM\otimes\ccE^{-1/x})\defin\sum_{k\geq0}\partial_x^kx^{-1}(F^{\lgamma+k}V^{\gamma-\lgamma}\ccM\otimes\ccE^{-1/x}).
\]
(The usefulness of the shift by $x^{-1}$ will appear later.) From now on, we will skip the term $\otimes\ccE^{-1/x}$, so we will write
\[
F_\Del^{\gamma}\wt\ccM\defin\sum_{k\geq0}(\partial_x+x^{-2})^kx^{-1} F^{\lgamma+k}V^{\gamma-\lgamma}\ccM.
\]
The sum above consists of a finite number of terms since, for $\beta\in(-1,0]$ fixed, $F^rV^\beta\ccM=0$ for $r\gg0$. Therefore, each $F_\Del^{\gamma}\wt\ccM$ is a locally free $\cO_X$-module of finite rank. Moreover, we clearly have the transversality property $(\partial_x+x^{-2})F_\Del^{\gamma}\wt\ccM\subset F_\Del^{\gamma-1}\wt\ccM$.

Let us show that the filtration is decreasing. Assume that $\gamma'\defin\beta'+p'\geq\gamma\defin\beta+p$, with $\beta,\beta'\in(-1,0]$ and $p,p'\in\ZZ$. The inclusion $F_\Del^{\beta'+p'}\subset F_\Del^{\beta+p}$ is clear if $\beta'\geq\beta$ (so~$p'\geq\nobreak p$). It remains to consider the case where $\beta'<\beta$ and $p'\geq p+1$ and it is enough to assume $p'=p+1$. Writing $1=(\partial_x+x^{-2})x^2-\partial_xx^2$, we get, for $k\geq0$,
\begin{multline*}
(\partial_x+x^{-2})^kx^{-1} F^{p+1+k}V^{\beta'}\ccM\subset (\partial_x+x^{-2})^{k+1}x^{-1} F^{p+1+k}V^{\beta'+2}\ccM\\
+(\partial_x+x^{-2})^k\partial_xxF^{p+1+k}V^{\beta'}\ccM.
\end{multline*}
In the right-hand side, the first term is contained in $F_\Del^{\beta+p}$ as \hbox{$\beta'+2\geq\beta$}. For the second one, we note that $\partial_xxF^{p+1+k}V^{\beta'}\subset F^{p+k}V^{\beta'}\subset x^{-1}F^{p+k}V^{\beta'+1}$, so the second term is also contained in $F_\Del^{\beta+p}$, as $\beta'+1\geq\beta$.

\begin{exemple}\label{ex:beta}
Let us assume, as in \cite{Deligne8406}, that \eqref{eq:FVM} holds and that $j^*F^\cbbullet\ccM$ has only one jump at $p=0$, so $j^*F^1\ccM=0$ and $j^*F^0\ccM=j^*\ccM$. Then, for $\beta\in(-1,0]$ and~$p\in\nobreak\ZZ$,
\[
F_\Del^{\beta+p}\wt\ccM=\begin{cases}
0&\text{if $p\geq1$},\\
x^{2p-1}V^\beta&\text{if $p\leq0$}.
\end{cases}
\]
Indeed, if $p=-1$ for instance,
\[
F_\Del^{\beta-1}\wt\ccM=(\partial_x+x^{-2})x^{-1}V^\beta+x^{-1}V^\beta=x^{-3}V^\beta,
\]
as $x^2\partial_x+1$ is invertible on $V^\beta\ccM$ (as $\ccM$ has a regular singularity at $x=0$, it is enough to check this on modules like $\cO_X\langle\partial_x\rangle/(x\partial_x-\alpha)^\mu$).
\end{exemple}

\subsection{Deligne's filtration on the de~Rham complex}\label{subsec:DelignefiltrdR}

We now consider the case where $(\ccM,F^\cbbullet)$ is a filtered $\cD$-module on the projective line $\PP^1$. We denote by~$t$ a fixed affine coordinate on the affine line $\Afu=\PP^1\moins\{\infty\}$. We define the Deligne filtration on $\ccM\otimes\ccE^{-t}$, with $\wt\ccM\defin \cO_{\PP^1}(*\infty)\otimes_{\cO_{\PP^1}}\ccM$, by the following formulas (for simplicity, we identify $\ccM\otimes\ccE^{-t}$ with the $\cO_{\PP^1}(*\infty)$-module $\wt\ccM$ with twisted connection $\nabla-dt$):
\begin{itemize}
\item
Away from $\infty$, we set $F_\Del^{\beta+p}\wt\ccM=F^p\ccM$ for $\beta\in(-1,0]$, and $p\in\ZZ$,
\item
near $\infty$, we set $x=1/t$ and use the definition of \S\ref{subsec:Delignefiltr}.
\end{itemize}

The de~Rham complex $\DR(\wt\ccM\otimes\ccE^{-t})$ is filtered by setting, for each $\gamma\in\RR$,
\[
F_\Del^{\gamma}\DR(\wt\ccM\otimes\ccE^{-t})=\{0\ra F_\Del^{\gamma}(\wt\ccM\otimes\ccE^{-t})\To\nabla\Omega_{\PP^1}^1\otimes F_\Del^{\gamma-1}(\wt\ccM\otimes\ccE^{-t})\ra0\},
\]
a complex which is also written as
\[
\{0\ra F_\Del^{\gamma}\wt\ccM\To{\nabla-dt}\Omega_{\PP^1}^1\otimes F_\Del^{\gamma-1}\wt\ccM\ra0\}.
\]

\begin{exemple}
Let us consider the case of Example \ref{ex:alpha}. Using the computation in Example \ref{ex:beta}, we find, on $\PP^1\moins\{t=0\}$ with the coordinate~$t'$,
\[
F_\Del^\gamma(\wt\ccM\otimes\ccE^{-t})=
\begin{cases}
0&\text{if }\gamma>0,\\
\CC[t']t^{\prime-2p}[1]&\text{if }\gamma=-p,\;p\in\NN,\\
\CC[t']t^{\prime-(2p+1)}[1]&\text{if }\gamma=-\alpha-p,\;p\in\NN.
\end{cases}
\]
In particular, the complex $\gr_{F_\Del}^{1-\alpha}\DR(\wt\ccM\otimes\ccE^{-t})$ reduces to the complex having only $\gr_{F_\Del}^{-\alpha}(\wt\ccM\otimes\ccE^{-t})$ in degree one, and we get \ref{th:Deligne}\eqref{th:Deligne2} in that case.
\end{exemple}

Theorem \ref{th:Deligne} is a consequence of the following proposition, together with Lemma \ref{lem:HodgeVsol} and \eqref{eq:nugammaDR}.

\begin{proposition}\label{prop:Deligne}
If $(\ccM,F^\cbbullet)$ underlies a polarized complex Hodge $\cD$-module on $\PP^1$, then the filtered de~Rham complex $\bR \Gamma\DR(\wt\ccM\otimes\ccE^{-t}, F_\Del^\cbbullet)$ is strict, that is, for any $\gamma\in\RR$, the natural morphism
\[
\HH^*\big(\PP^1,F_\Del^{\gamma}\DR(\wt\ccM\otimes\ccE^{-t})\big)\to \HH^*\big(\PP^1,\DR(\wt\ccM\otimes\ccE^{-t})\big)
\]
is injective, with image $V^{\gamma-1}\HH^1\big(\PP^1,\DR(\wt\ccM\otimes\ccE^{-t})\big)$ (when $*=1$).
\end{proposition}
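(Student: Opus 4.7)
The plan is to realize the Rees complex of $\bigl(\DR(\wt\ccM\otimes\ccE^{-t}),F_\Del^\cbbullet\bigr)$ as a Koszul-type complex built from the $V$-filtered Laplace transform $V_\tau^\cbbullet\wh{R_FM}_\loc$, so that Lemma~\ref{lem:tau-hb}, together with the Birkhoff $V$-solution provided by Lemma~\ref{lem:HodgeVsol}, yields both the $E_1$-degeneration and the identification of the image $V^{\gamma-1}\HH^1_{\DR}$. Throughout I rely on the compatibility relations \eqref{eq:FVM}--\eqref{eq:tFVM} between $F^\cbbullet\ccM$ and the Kashiwara-Malgrange filtration, valid because $(\ccM,F^\cbbullet\ccM)$ underlies a polarized complex Hodge $\cD$-module.

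First, for each $\gamma\in\RR$, I form the Rees complex
\[
\cK_\gamma\defin\Bigl\{\,R_{F_\Del^{\gamma+\cbbullet}}\wt\ccM\;\To{\nabla-dt}\;\Omega^1_{\PP^1}\otimes R_{F_\Del^{\gamma-1+\cbbullet}}\wt\ccM\,\Bigr\},
\]
a complex of graded $\cO_{\PP^1}[\hb]$-modules on $\PP^1$. By standard principles, strictness of $F_\Del^\cbbullet$ on $\HH^*(\PP^1,\DR(\wt\ccM\otimes\ccE^{-t}))$ amounts to $\CC[\hb]$-freeness of $\HH^*(\PP^1,\cK_\gamma)$ for every $\gamma$, while the fibre of $\HH^*(\PP^1,\cK_\gamma)$ at $\hb=1$ recovers the image of $\HH^*(\PP^1,F_\Del^\gamma\DR)\to\HH^*(\PP^1,\DR)$.

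The crucial step is the identification of $\HH^*(\PP^1,\cK_\gamma)$ with the cohomology of the two-term Koszul complex
\[
\cL_\gamma\defin\Bigl\{\,V_\tau^{\gamma-1}\wh{R_FM}_\loc\;\To{\tau-\hb}\;V_\tau^{\gamma-1}\wh{R_FM}_\loc\,\Bigr\}.
\]
On the affine chart $\Afu_t$, where $F_\Del^p=F^p$, the Rees construction turns the de~Rham complex $\{M\To{\partial_t-1}M\}$ into $\{R_FM\To{\partiall_t-\hb}R_FM\}$, which under the Laplace identification $\partiall_t\leftrightarrow\tau$ recalled in~\S\ref{subsec:FLVHS} matches the restriction of $\cL_\gamma$ to this chart. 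Near $\infty$, in the coordinate $x=1/t$, the definition $F_\Del^\gamma\wt\ccM=\sum_k(\partial_x+x^{-2})^kx^{-1}F^{\lgamma+k}V^{\gamma-\lgamma}\ccM$ is tailored so that the iterated action of $\partial_x+x^{-2}$ implements the $\theta'$-saturation that defines the Brieskorn lattice~$G_0$, the factor $x^{-1}$ provides the $dx$ that converts the differential into $\tau-\hb$, and \eqref{eq:FVM}--\eqref{eq:tFVM} match $F^{\lgamma+k}V^{\gamma-\lgamma}\ccM$ with the appropriate piece of $G_0\cap V_\theta^{\gamma-1}G$; the formula $(*)_\infty$ of Lemma~\ref{lem:calculVg} then identifies the local Rees of $F_\Del^{\gamma+\cbbullet}\wt\ccM$ with the $V_\tau^{\gamma-1}$-piece of $\wh{R_FM}_\loc$. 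Gluing the two charts yields the desired isomorphism $\HH^*(\PP^1,\cK_\gamma)\simeq H^*(\cL_\gamma)$.

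Once this is in place, the conclusion is immediate: the multiplication by $\tau-\hb$ is injective on $V_\tau^{\gamma-1}\wh{R_FM}_\loc$ (as noted just before Lemma~\ref{lem:tau-hb}), so $H^*(\cL_\gamma)$ is concentrated in degree one and equals the cokernel. By Lemma~\ref{lem:HodgeVsol} the Birkhoff problem for $G_0^{(F)}$ admits a $V$-solution, and hence Lemma~\ref{lem:tau-hb} (in the form \eqref{eq:VRF}) identifies this cokernel with the Rees module of $V^{\gamma-1+\cbbullet}H^1_{\DR}(\PP^1,\ccM\otimes\ccE^{-t})$; the latter is $\CC[\hb]$-free of finite rank, proving strictness, and its fibre at $\hb=1$ is precisely $V^{\gamma-1}H^1_{\DR}$, giving the image identification. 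The main obstacle I expect is the local identification near $\infty$: carefully tracking the $x^{-1}$ shift, the conversion $\theta'=\partial_t^{-1}\leftrightarrow\hb\tau^{-1}$, and the compatibility \eqref{eq:FVM} so as to match the local Rees of $F_\Del^{\gamma+\cbbullet}\wt\ccM$ with the $V_\tau^{\gamma-1}$-piece of $\wh{R_FM}_\loc$ via Lemma~\ref{lem:calculVg}, and then verifying that this local identification glues consistently with the affine chart.
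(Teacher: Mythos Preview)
Your overall strategy coincides with the paper's: both aim to identify $\HH^1\bigl(\PP^1,R_{F_\Del^{\gamma+\cbbullet}}\DR(\wt\ccM\otimes\ccE^{-t})\bigr)$ with the cokernel of $\tau-\hb$ on $V_\tau^{\gamma-1}\wh{R_FM}_\loc$, and then invoke Lemma~\ref{lem:tau-hb} (available thanks to Lemma~\ref{lem:HodgeVsol}) to recognise this cokernel as the Rees module of $V^{\gamma-1+\cbbullet}H^1_{\DR}$. Where you diverge is in the justification of the ``crucial step'' $\HH^*(\PP^1,\cK_\gamma)\simeq H^*(\cL_\gamma)$, and here there is a genuine gap.

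Your chart-by-chart argument cannot work as written: $\cL_\gamma$ is a two-term complex of $\CC[\tau,\hb]$-modules, not of sheaves on $\PP^1$, so it has no ``restriction to the affine chart''. More seriously, on $\Afu_t$ the Rees de~Rham complex is $\{R_FM\To{\partiall_t-\hb}R_FM\}$, and $R_FM$ is \emph{not} $V_\tau^{\gamma-1}\wh{R_FM}_\loc$; the latter is built from the Brieskorn lattice $G_0$ and the $V_\theta$-filtration of $G$, and Lemma~\ref{lem:calculVg} only describes $V_\tau^\cbbullet\wh{R_FM}_\loc$ in those terms, not in terms of $R_FM$ on $\PP^1$. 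The paper bridges this by introducing the intermediate object $\cFcM$ on $\PP^1\times\Afuh$, computing the \emph{sheaf} identity $V_\tau^\beta\cFcM/(\tau-\hb)V_\tau^\beta\cFcM = R_{F_\Del^{\beta+\cbbullet}}(\wt\ccM\otimes\ccE^{-t})$ on $\PP^1$ (this is \eqref{eq:VcFcMred}, and is the local computation near $\infty$ you correctly flag as delicate), and then taking $\wh p_+$. The passage to hypercohomology, i.e.\ the identification $\cH^0\wh p_+V_\tau^\beta\cFcM=V_\tau^\beta\wh{R_FM}_\loc$ together with $\cH^j\wh p_+V_\tau^\beta\cFcM=0$ for $j\neq0$, is not a gluing argument: it is the strictness of the $V$-filtered direct image, drawn from \cite[Th.~3.3.15, Th.~6.1.1]{Bibi01c} and \cite[Prop.~4.1(ii), Cor.~5.9]{Bibi05b}, and ultimately rests on the twistor $\cD$-module machinery. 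You have implicitly assumed this when asserting $\HH^*(\PP^1,\cK_\gamma)\simeq H^*(\cL_\gamma)$, but nothing in your sketch proves it. The obstacle you singled out (tracking the $x^{-1}$ shift near $\infty$) is real and corresponds to \eqref{eq:VcFcMred}, but it is the easier half; the missing half is the strictness input.
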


\begin{proof}
We assume for simplicity that $R_F\ccM$ underlies a polarized Hodge $\cD$-module of weight~$0$, with polarization $(\id,\id)$, that we denote $\cT=(R_F\ccM,R_F\ccM,R_Fk)=(\cM,\cM,C)$ (\cf \cite{Bibi05}). Let us consider a new copy of the affine line, that we denote by $\Afuh$ with coordinate~$\tau$, and let us denote by $p:\PP^1\times\Afuh\to\PP^1$ the projection. Let us set $\cFcT=(\cFcM,\cFcM,\cFC)$ with
\[
\cFcM\defin \cE^{-t\tau/\hb}\otimes p^+\wt\cM
\]
and $\cFC$ defined as in \cite[\S3]{Bibi05b}. If $V^\cbbullet\wt\cM$ denotes the $V$-filtration along $t'=0$, we have $V_{t'}^\beta\wt\cM=R_FV_{t'}^\beta\ccM$ for $\beta>-1$. The $V$-filtration of $\cFcM$ along $\tau=0$ is computed in \cite{Bibi05b}. In a chart near $(t=\infty,\tau=0)$, setting $t'=1/t$, it is given by the formula (if~$\beta\in(-1,0]$)
\[
V_\tau^\beta\cFcM=
\sum_{k\geq0}\partiall_{t'}^k(p^*R_FV_{t'}^{\beta-1}\wt\ccM\otimes\cE^{-t\tau/\hb}).
\]
(There is also a formula for $V_\tau^\gamma\cFcM$ for any $\gamma\in\RR$, but it will be needed here.) This can be rewritten as
\[
V_\tau^\beta\cFcM=
\sum_{k\geq0}(1\otimes\hb\partial_{t'}+\tau\otimes t^{\prime-2})^k(\CC[\tau]\otimes_\CC t^{\prime-1}R_FV_{t'}^\beta\wt\ccM).
\]
Let us now consider the multiplication by $\tau-\hb$. It is clearly injective on $\cFcM$, hence on each $V_\tau^\beta\cFcM$. The cokernel is given by
\[
V_\tau^\beta\cFcM/(\tau-\hb)V_\tau^\beta\cFcM=
\sum_{k\geq0}(\partial_{t'}+t^{\prime-2})^kt^{\prime-1}\hb^k\CC[\hb]R_FV_{t'}^\beta\wt\ccM
\]
that is, for any $\beta\in(-1,0]$,
\begin{equation}\label{eq:VcFcMred}
V_\tau^\beta\cFcM/(\tau-\hb)V_\tau^\beta\cFcM=R_{F^{\beta+\cbbullet}_\Del}(\wt\ccM\otimes\ccE^{-1/t'}).
\end{equation}
In a chart away from $t=\infty$ (and near $\tau=0$), we have, for $\beta\in(-1,0]$,
\[
V_\tau^\beta\cFcM=\cFcM,
\]
and therefore \eqref{eq:VcFcMred} remains valid in this chart.

Let $\wh p:\PP^1\times\Afuh\to\Afuh$ denote the projection. Then, according to \cite[Th\ptbl3.3.15]{Bibi01c}, \cite[Prop\ptbl4.1(ii) and Cor\ptbl5.9]{Bibi05b} and \cite[Th\ptbl6.1.1]{Bibi01c}, the filtered complex $\wh p_+V_\tau^\cbbullet\cFcM$ is strict, that is, $\cH^j\wh p_+V_\tau^\beta\cFcM\to\cH^j\wh p_+\cFcM$ is injective for any $j$ and~$\beta$, therefore $\cH^j\wh p_+V_\tau^\beta\cFcM=0$ for any $j\neq0$ and any~$\beta$, and $\cH^0\wh p_+V_\tau^\beta\cFcM$ is identified with $V_\tau^\beta\cH^0\wh p_+\cFcM=V_\tau^\beta\wh{R_FM}=V_\tau^\beta\wh{R_FM}_\loc$ (because $\beta>-1$).

We thus have an exact sequence
\[
0\ra\cH^0\wh p_+V_\tau^\beta\cFcM\To{\tau-\hb}\cH^0\wh p_+V_\tau^\beta\cFcM\ra\cH^0\wh p_+R_{F_\Del^{\beta+\cbbullet}}(\wt\ccM\otimes\ccE^{-t})\ra0
\]
which is identified with the exact sequence (\cf \eqref{eq:VRF})
\[
0\ra V_\tau^\beta\wh{R_FM}_\loc\To{\tau-\hb}V_\tau^\beta\wh{R_FM}_\loc\ra R_{V^{\beta+\cbbullet}}H^1_{\DR}(\PP^1,\wt\ccM\otimes\ccE^{-t})\ra0
\]
The identification of the action of $\hb^2\partial_\hb$, \ie the grading with respect to the filtrations involved, gives
\[
\cH^0\wh p_+R_{F_\Del^{\beta+\cbbullet}}(\wt\ccM\otimes\ccE^{-t})=\HH^1\big(\PP^1,R_{F_\Del^{\beta+\cbbullet+1}}\DR(\wt\ccM\otimes\ccE^{-t})\big).
\]
Fixing the power of~$\hb$ shows therefore that, for any $p\in\ZZ$, the morphism
\[
\HH^1\big(\PP^1,F_\Del^{\beta+p}\DR(\wt\ccM\otimes\ccE^{-t})\big)\to\HH^1\big(\PP^1,\DR(\wt\ccM\otimes\ccE^{-t})\big)
\]
induces an isomorphism onto $V^{\beta+p-1}H^1_{\DR}(\PP^1,\wt\ccM\otimes\ccE^{-t})$, as was to be proved.
\end{proof}

\begin{remarque}\label{rem:DelignefiltG}
It is possible to define the Deligne filtration as a filtration on $G$ and not only on its fibre at $\theta=1$. For any $\gamma\in\RR$, one sets
\[
F_\Del^\gamma G=\CC[\theta,\theta^{-1}]\cdot (G_0\cap V_\theta^\gamma G)\subset G.
\]
One easily checks that Griffiths transversality holds for this filtration (\ie $t\cdot F_\Del^\gamma G\subset F_\Del^{\gamma-1} G$). Moreover, the limit filtration when $\theta\to0$ is the Hodge filtration on $\bigoplus_{\beta\in(-1,0]}\gr_{V_\theta}^\beta G$, that is, for any $\beta\in(-1,0]$,
\[
F_\Del^\gamma\gr_{V_\theta}^\beta G\defin (F_\Del^\gamma G\cap V_\theta^\beta G)/(F_\Del^\gamma G\cap V_\theta^{>\beta}G)
\]
jumps at most at $\gamma=\beta+p$, $p\in\ZZ$, and
\[
F_\Del^{\beta+p}\gr_{V_\theta}^\beta G=G^p\gr_{V_\theta}^\beta G\defin (G^p\cap V_\theta^\beta G)/(G^p\cap V_\theta^{>\beta}G),
\]
where we recall that $G^p=\theta^{\prime p}G_0$. These properties are checked by using a $V$-solution of the Birkhoff problem for $G_0$, as in Lemma \ref{lem:tau-hb}.
\end{remarque}

\section{The new supersymmetric index and the spectrum}

Let $(\cT,\cS)$ be a polarized complex Hodge $\cD$-module of weight~$w$ on~$\PP^1$ (\cf Definition \ref{def:polHodge}). Its exponentially twisted de~Rham cohomology $\cH^0a_+\FcT$ is an integrable polarized twistor structure of weight~$w$, according to Corollary \ref{cor:FcT}, which is identified to the fibre $(\wh\cT,\wh\cS)_1$ of the Fourier-Laplace transform $(\wh\cT,\wh\cS)$ at $\tau=1$.

Recall (\cf Appendix~\ref{sec:appendiceB}) that we have a rescaling action with respect to $\tau\in\CC^*$, that we denote by $\mu^*_\tau$, on integrable twistor structures. Applying it to $\wh\cT_1$, we get a family $\Susy_{\mu^*_\tau\wh\cT_1}(T)$ of polynomials in $T$ with coefficients depending on $\tau\in\CC^*$.

\begin{theoreme}\label{th:conjHertling}
If $(\cT,\cS)$ is a polarized complex Hodge $\cD$-module of weight~$w$, then
\begin{align*}
\lim_{\tau\to0}\Susy_{\mu^*_\tau\wh\cT_1}(T)&=\SP^\infty_{\wh\cT_1}(T),\\
\lim_{\tau\to\infty}\Susy_{\mu^*_\tau\wh\cT_1}(T)&=\SP^0_{\wh\cT_1}(T).
\end{align*}
\end{theoreme}

\begin{remarque}\label{rem:relations1-2}
It follows that the eigenvalues of the new supersymmetric index of $\mu_\tau^*\wh\cT_1$ interpolate, when~$\tau$ varies between~$0$ and $\infty$, between the spectrum at $\infty$, which gives, by exponentiating, the eigenvalues of the monodromy of the original variation of Hodge structure around $t=\infty$, and the spectrum at~$0$, which gives, by exponentiating, the eigenvalues of the monodromy of the original variation near the singular points at finite distance. In particular, in general, $\Susy_{\mu^*_\tau\wh\cT_1}(T)$ is far from being constant with respect to~$\tau$.

As we will indicate below, $\mu^*_\tau\wh\cT_1$ is nothing but the fibre $\wh\cT_\tau$. If $w=0$, $\wh\cT_{|\tau\neq0}$ is a variation of polarized pure twistor structure of weight~$0$, hence corresponds to a flat bundle with harmonic metric $\wh h$. The flat bundle is nothing else but $G^\an$ with its connection. Now, $\Susy_{\wh\cT_\tau}(T)$ is the characteristic polynomial of the selfadjoint operator $\cQ$ acting on the $C^\infty$ bundle associated to~$G$. Therefore, at each $\tau^o\in\CC^*$, the fibre $G_{\tau^o}$ has a $\wh h$-orthogonal decomposition indexed by the eigenvalues of $\cQ_{\tau^o}$. This decomposition does not give rise to a filtration of $G^\an$ indexed by a discrete set of $\RR$, however. On the other hand, Deligne's filtration introduced in Remark \ref{rem:DelignefiltG} is a Hodge-type filtration, but does not correspond, in general, to the previous `Hodge' decomposition. Nevertheless, this difference disappears asymptotically when $\tau\to0$, according to the theorem.
\end{remarque}

\begin{exemple}\label{ex:conjHertling}
Let $f$ be a cohomologically tame function on an smooth complex affine variety~$U$ as in Examples \ref{ex:cohomtame} and \ref{ex:cohomtameloc} and let $G_0$ be the corresponding Brieskorn lattice. In \cite[Th\ptbl4.10]{Bibi05} we have defined (following a conjecture of C\ptbl Hertling) a sesquilinear pairing $\wh C$ on $G_0$ and proved that $\cG\defin(G_0,G_0,\wh C)$ is a pure twistor structure of weight~$0$ polarized by $(\id,\id)$. It is moreover integrable, and can be obtained as the direct image by the constant map of the exponential twist of the minimal extension of a suitable variation of Hodge structure (namely, the intermediate direct image by~$f$ of~$\cO_U$, with a suitable Tate twist).

In such a case, there is a natural real structure coming from the real structure on the cohomology $H^{\dim U}(U,f^{-1}(t))$ for a regular value $t\in\CC$ of~$f$, and we can define $\cQ^\Hert$ as in Remark \ref{rem:QHert}, whose eigenvalues are symmetric with respect to~$0$. According to the symmetry, mentioned in Examples \ref{ex:cohomtame} and \ref{ex:cohomtameloc}, of the spectrum at the origin or at infinity with respect to $\tfrac12\dim U$, Theorem \ref{th:conjHertling} reads:
\begin{align*}
\lim_{\tau\to0}\Susy^\Hert_{\mu^*_\tau\cG}(T)&=\SP^\infty_{G_0}(T-\tfrac12\dim U),\\
\lim_{\tau\to\infty}\Susy^\Hert_{\mu^*_\tau\cG}(T)&=\SP^0_{G_0}(T-\tfrac12\dim U).
\end{align*}
\end{exemple}

\begin{proof}[Proof of Theorem \ref{th:conjHertling}]
It will have three steps.

\subsubsections{Step~1}
According to Proposition \ref{prop:Fourier-rescaling} in the appendix, $\mu^*_\tau\wh\cT_1$ is regarded as the fibre at~$\tau$ of the Fourier-Laplace transform $\wh\cT$ of $\cT$, whose definition is recalled in \S\ref{subsec:Fourier-Laplace}.

\subsubsections{Step~2}
According to \cite{Bibi05} (\cf \S\ref{subsec:localFL}), $\wh\cT$ is an integrable polarized regular twistor $\cD$-module of weight~$w$ on the analytic affine line with coordinate~$\tau$, and according to Theorem \ref{th:FourierlocalT} of the appendix \ref{sec:appendiceA}, it is an integrable wild twistor $\cD$-module of weight~$w$ at $\tau=\infty$ (this could also be deduced from \cite{Mochizuki08}). We can therefore apply Theorem \ref{th:limSusytame} at $\tau=0$ and Theorem \ref{th:limSusywild} at $\tau=\infty$ to compute the left-hand sides in Theorem \ref{th:conjHertling}.

\subsubsections{Step~3 for $\tau\to0$}
From Corollary \ref{cor:psinilporb} applied to the right-hand sides in \ref{eq:psiwtTT}, we conclude from Lemma \ref{lem:SP=Susy} that the $\Susy$ polynomials are equal to the corresponding $\SP^\infty$ polynomials. It follows that such a property holds for the left-hand sides, as the shift by~$\beta$ is the same for $\Susy$ and $\SP^\infty$. By Corollary \ref{cor:FcT} and Lemma \ref{lem:SP=Susy}, $\cH^0a_+\cT$ satisfies $\Susy=\SP^\infty$, hence so does the left-hand side in \ref{eq:psiwtTTP}. It follows that $(\Psi_\tau^0\wt\cT,\Psi_\tau^0\cS,\cN,\hb^2\partial_\hb)$ is $\Tw$ of a polarized complex mixed Hodge structure of weight~$w$, hence also satisfies $\Susy=\SP^\infty$. Therefore,
\begin{align*}
\prod_{\beta\in(-1,0]} \prod_{\ell\in\ZZ}\Susy_{\gr_\ell^\rM\Psi_\tau^\beta\wh\cT}(T)&=\prod_{\beta\in(-1,0]} \prod_{\ell\in\ZZ}\SP^\infty_{\gr_\ell^\rM\Psi_\tau^\beta\wh\cT}(T)\\
&=\SP^\infty_{G_0^{(F)}}(T)\quad\text{after Corollary \ref{cor:psiHodgeinfty}}\\
&=\SP^\infty_{\wh\cT_1}(T)\quad\text{by definition.}
\end{align*}
On the other hand, Theorem \ref{th:limSusytame} gives
\[
\lim_{\tau\to0}\Susy_{\wh\cT_\tau}(T)=\prod_{\beta\in(-1,0]} \prod_{\ell\in\ZZ}\Susy_{\gr_\ell^\rM\Psi_\tau^\beta\wh\cT}(T).\qedhere
\]
\end{proof}

\begin{proof}[Step~3 for $\tau\to\infty$]
We argue similarly at $\tau=\infty$. We apply Corollaries \ref{cor:psinilporb} and \ref{cor:phinilporb} to the right-hand sides in \ref{eq:psiwtTTinf} and get, according to Lemma \ref{lem:SP=Susy}, the equality between the $\Susy$ polynomial and the $\SP^0$ polynomial. This equality then also holds for the left-hand side, as the same shift of $-(\beta+1)$ applies to both. We conclude:
\begin{align*}
\prod_i\prod_{\beta\in(-1,0]}\prod_{\ell\in\ZZ} \Susy_{\gr^\rM_\ell\Psi_{\tau'}^{c_i/\tau',\beta}\wh\cT}&(T)=\prod_i\prod_{\beta\in(-1,0]}\prod_{\ell\in\ZZ} \SP^0_{\gr^\rM_\ell\Psi_{\tau'}^{c_i/\tau',\beta}\wh\cT}(T)\\
&=\prod_i\SP^0_{G_{i,0}^{(F)}}(T)\quad \text{after Corollary \ref{cor:psiHodge0}}\\
&=\SP^0_{\wh\cT_1}(T)\quad \text{by definition}.
\end{align*}
On the other hand, Theorem \ref{th:limSusywild} gives
\[
\lim_{\tau'\to0}\Susy_{\wh\cT_{\tau'}}(T)=\prod_i\prod_{\beta\in(-1,0]} \prod_{\ell\in\ZZ}\Susy_{\gr^\rM_\ell\Psi_{\tau'}^{c_i/\tau',\beta}\wh\cT}(T).\qedhere
\]
\end{proof}

\appendix
\section{Stationary phase formula for polarized twistor $\cD$-modules}\label{sec:appendiceA}

Let $(\cT,\cS)$ be a polarized regular twistor $\cD$-module of weight~$w$ (in the sense of \cite{Bibi01c} or \cite{Mochizuki07}) on $\PP^1$. The Fourier-Laplace transform $(\wh\cT,\wh\cS)$ is an object of the same kind on the \emph{analytic} affine line $\Afuh$ with coordinate~$\tau$, after \cite{Bibi04} and \cite{Bibi05b}. The purpose of this appendix \ref{sec:appendiceA} is to show that this Fourier-Laplace transform naturally extends as a wild twistor $\cD$-module (in the sense of \cite{Bibi06b}, \cf also \cite{Mochizuki08}) near $\wh\infty\in\wh\PP^1$ and to relate the corresponding nearby cycles with the vanishing cycles of $(\cT,\cS)$ at its critical points (stationary phase formula). While the first goal could directly be obtained from recent work of T\ptbl Mochizuki \cite{Mochizuki08}, we follow here the method of \cite{Bibi05b} in order to get in the same way the stationary phase formula. We will use the notation introduced in \S\ref{subsec:vartw} and in \S\ref{subsec:Fourier-Laplace}.

\begin{theoreme}\label{th:FourierlocalT}
Let $(\cT,\cS)$ be a polarized regular twistor $\cD$-module of weight~$w$ on~$\PP^1$. Then its Fourier-Laplace transform $(\wh\cT,\wh\cS)$ is a polarized wild twistor $\cD$-module of weight~$w$ on~$\wh\PP^1$ and, for any $c\in\CC$, we have functorial isomorphisms in $\RTriples(\pt)$ compatible with the polarizations induced by $\wh\cS$ and~$\cS$ respectively:
\begin{align*}
(\Psi_{\tau'}^{c/\tau',\beta}\wh\cT,\cN_{\tau'})&\simeq (\Psi_{t+c}^\beta\cT,\cN_{t+c})\quad\text{if $\reel\beta\in(-1,0)$},\\[-9pt]
\tag{$*$}\\[-9pt]
\gr_\bbullet^\rM\Psi_{\tau'}^{c/\tau',\beta}\wh\cT&\simeq \gr_\bbullet^\rM\Psi_{t+c}^\beta\cT\quad\text{if }\beta\in i\RR^*,\\
(\Psi_{\tau'}^{c/\tau',0}\wh\cT,\cN_{\tau'})&\simeq (\phi_{t+c}^{-1}\cT,\cN_{t+c}).\tag{$**$}
\end{align*}
\end{theoreme}

\begin{remarque}
In \cite{Bibi05b} and \cite[Appendix]{Bibi01c}, the distinction between the two lines in the analogue of \ref{th:FourierlocalT}$(*)$ was mistakenly forgotten in the corresponding statements (see Footnote \ref{foot:gr} below).
\end{remarque}

\begin{proof}
According to the results of \cite{Bibi04} and \cite{Bibi05b}, it is enough to prove the conditions on the wild specialization at $\wh\infty$, that is, $(*)$ and $(**)$ with possible ramification at $\tau'=0$. We will denote by $i_0$ the inclusion $\{0\}\hto\PP^1$. We can reduce to the case $w=0$ by Tate twist by $(w/2)$, and assume that $\cS=(\id,\id)$, so $\wh\cS=(\id,\id)$. The compatibility with polarizations will then be clear from the proof.

As in \cite[Prop\ptbl4.1]{Bibi05b}, we will denote by $D_\beta$ the divisor $1\cdot i$ if $\beta\in i\RR_-^*$ and $1\cdot (-i)$ if $\beta\in i\RR_+^*$, and $D_\beta=0$ otherwise. For a $\cR$-module $\cN$, the $\cR$-module~$\cN(D_\beta)$ is defined as usual as $\cO_{\Omega_0}(D_\beta)\otimes_{\cO_{\Omega_0}}\cN$. We denote the monodromy filtration of a nilpotent endomorphism by $\rM_\bbullet$.

\begin{lemme}\label{lem:FourierlocalM}
Let $\cM$ be a coherent $\cR_{\cP^1}$-module which is strictly specializable at $t=0$. Then the $\cR_\cZ(*\wh\infty)$-module $\cFcM$ is strictly specializable along $\tau'=0$ and we have natural functorial isomorphisms of $\cR_{\cP^1}$-modules with nilpotent endomorphism
\begin{align*}
(\Psi_{\tau'}^{0,\beta}\cFcM_{|\Delta_0},\rN_{\tau'})&\simeq i_{0,+}(\Psi_t^\beta\cM_{|\Delta_0},\rN_t)\quad\text{if $\reel\beta\in(-1,0)$},\\[-9pt]
\tag{$*$}\\[-9pt]
\gr_\bbullet^\rM\Psi_{\tau'}^{0,\beta}\cFcM_{|\Delta_0}&\simeq i_{0,+}(\gr_\bbullet^\rM\Psi_t^\beta\cM(D_\beta)_{|\Delta_0})\quad\text{if }\beta\in i\RR^*,\\
(\Psi_{\tau'}^{0,0}\cFcM,\rN_{\tau'})&\simeq i_{0,+}(\phi_t^{-1}\cM,\rN_t).\tag{$**$}
\end{align*}
\end{lemme}

Recall (\cf \cite[\S3.6.b]{Bibi01c}) that $\phi_t^{-1}\cM$ is also denoted $\psi_t^{-1}\cM$ (or $\psi_{t,0}\cM$ if one uses the increasing notation), but later we will extend the correspondence with sesquilinear pairings, where the distinction between $\phi$ and $\psi$ is important. Recall also (\cf \cite{Bibi06b}) that the notation $\Psi_{\tau'}^{0,\beta}\cFcM$ has the same meaning as $\Psi_{\tau'}^\beta\cFcM$ (as used on the right-hand side), but we mean here that the possible exponential factor is zero, for later use.

\begin{proof}
We will consider the two charts $(t,\tau')$ and $(t',\tau')$. Let us first start with the second one. Let $(m_i)_{i\in I}$ be a finite set of $\cR_{\cP^1,(t'_o,\hbo)}$-generators of $\cM_{t'_o,\hbo}$. Then, from the formulas (\cf \cite[(A.2.5)]{Bibi01c})
\begin{align*}
m/t'\tau'\otimes\ccE^{-t\tau/\hb}&=\tau'\partiall_{\tau'}(m\otimes\ccE^{-t\tau/\hb})\\
\partiall_{t'}(m\otimes\ccE^{-t\tau/\hb})&=(\partiall_{t'}m)\otimes\ccE^{-t\tau/\hb}+\tau'(\tau'\partiall_{\tau'})^2(m\otimes\ccE^{-t\tau/\hb}),
\end{align*}
we conclude that, in this chart, $\cFcM$ is $V_0\cR_\cZ$-coherent (where the $V$-filtration on $\cZ$ is relative to $\tau'=0$) and that it is strictly specializable along $\tau'=0$ with a constant $V$-filtration.

Let us now consider the chart $(t,\tau')$ and the corresponding formulas \cite[(A.2.4)]{Bibi01c}
\begin{align*}
\partiall_t(m\otimes\ccE^{-t\tau/\hb})&=[(\partiall_t-1/\tau')m]\otimes\ccE^{-t\tau/\hb},\\
\partiall_{\tau'}(m\otimes\ccE^{-t\tau/\hb})&=tm/\tau^{\prime2}\otimes\ccE^{-t\tau/\hb}.
\end{align*}
The proof is very similar to that of \cite[Prop\ptbl4.1]{Bibi05b}. It will be simpler to work with the algebraic version of $\cFcM$, that is, to consider the projection~$p$ in the algebraic sense, so $p^+\wt\cM(*\wh\infty)=\CC[\tau',\tau^{\prime-1}]\otimes_\CC\wt\cM$. Moreover, as we work in the (analytic) chart with coordinate~$t$, there is no difference between $\wt\cM$ and $\cM$. We will exhibit the $V$-filtration of $\cFcM$ along $\tau'=0$. As such a filtration is only locally defined with respect to~$\hb$, we fix~$\hbo$ and work with in some neighbourhood of~$\hbo$. We will forget~$\hbo$ in the notation of the $V$-filtration. For any $b\in\RR$, let us set
\[
U^b\cFcM\defin\sum_{\ell\geq0}\partiall_t^\ell\sum_{k\in\ZZ}\tau^{\prime-k}(V^{b+k}\cM\otimes\ccE^{-t/\hb\tau'})\subset\cM[\tau',\tau^{\prime-1}]\otimes\ccE^{-t/\hb\tau'},
\]
where $V^\cbbullet\cM$ is the $V$-filtration (near~$\hbo$) of $\cM$ along $t=0$. The following properties are easily checked:
\begin{itemize}
\item
$U^\cbbullet\cFcM$ is a decreasing filtration of $\cFcM$ by $\cR_{\cP^1}[\tau']\langle\tau'\partiall_{\tau'}\rangle$-modules.
\item
$\tau' U^b\cFcM=U^{b+1}\cFcM$ (so~$\tau'$ induces $\gr_U^b\cFcM\isom\gr_U^{b+1}\cFcM$).
\item
From the strict specializability of $\cM$ (\cf\cite[Def\ptbl3.3.8 \&\ Rem\ptbl 3.3.9(2)]{Bibi01c}) we get, for $b\in(-1,0]$ and $k\in\NN$, $V^{b+k}\cM=t^kV^b\cM$ and $V^{b-1-k}\cM=\sum_{j=0}^{k-1}\partiall_t^jV^{-1}\cM+\partiall_t^kV^{b-1}\cM$. If we write, according to \cite[(A.2.4)]{Bibi01c},
\begin{align*}
U^b\cFcM=\sum_{\ell\geq0}\partiall_t^\ell\Big(\sum_{k\geq0}(\tau'\partiall_{\tau'})^k(V^b\cM\otimes{}&\ccE^{-t/\hb\tau'})\\[-12pt]
&+\tau'\sum_{k\geq0}\tau^{\prime k}(V^{b-1-k}\cM\otimes\ccE^{-t/\hb\tau'})\Big)
\end{align*}
and, for $k\geq0$,
\begin{multline*}
\tau^{\prime k}(V^{b-1-k}\cM\otimes\ccE^{-t/\hb\tau'})=(\tau'\partiall_t+1)^k(V^{b-1}\cM\otimes\ccE^{-t/\hb\tau'})\\
+\sum_{j=0}^{k-1}\tau^{\prime j}(\tau'\partiall_t+1)^{k-j}(V^{-1}\cM\otimes\ccE^{-t/\hb\tau'}),
\end{multline*}
we find that $U^b\cFcM$ is $\cR_{\cP^1}[\tau']\langle\tau'\partiall_{\tau'}\rangle$-coherent, and locally generated as such by $m_i\otimes\nobreak\ccE^{-t/\hb\tau'}$ and $\tau'(n_j\otimes\nobreak\ccE^{-t/\hb\tau'})$, if $m_i$ (\resp $n_j$) are local $\cO_{\cP^1}\langle t\partiall_t\rangle$-generators of $V^b\cM$ (\resp $V^{b-1}\cM$).

\item
If $B_b(s)$ is the minimal polynomial of $t\partiall_t$ on $\gr^b_V\cM$, then, for any local section~$m$ of $V^b\cM$, as $(t\partiall_t+\tau'\partiall_{\tau'})(m\otimes\ccE^{-t/\hb\tau'})=(t\partiall_t m)\otimes\ccE^{-t/\hb\tau'}$ and as $\partiall_tt(m\otimes\nobreak\ccE^{-t/\hb\tau'})\in U^{b+1}\cFcM$, we find $B_b(\tau'\partiall_{\tau'}-1)(m\otimes\ccE^{-t/\hb\tau'})\in U^{>b}\cFcM$.
\end{itemize}

It follows from these properties that $V^b\cFcM\defin U^{b-1}\cFcM$ is a good candidate for being the $V$-filtration of $\cFcM$. It remains to check the strict specializability, by computing the graded modules.

For any $b\in\RR$, the map (where $\eta$ is a new variable)
\begin{align*}
V^b\cM[\eta]&\to U^b\cFcM=V^{b+1}\cFcM\\
\sum_pm_p\eta^p&\mto\sum_p\partiall_t^p(m_p\otimes\ccE^{-t/\hb\tau'})
\end{align*}
induces a mapping
\begin{equation}\label{eq:grUV}
(\gr^b_V\cM[\eta],t\partiall_t)\to(\gr^b_U\cFcM,\tau'\partiall_{\tau'}-1)
\isoTo{\tau^{\prime-1}}
(\gr^b_V\cFcM,\tau'\partiall_{\tau'}),
\end{equation}
and we have a commutative diagram
\begin{equation}\label{eq:comdiag}
\begin{array}{c}\xymatrix{
\gr^b_V\cM[\eta]\ar[r]\ar[d]_{\partiall_t}&\gr^b_V\cFcM\ar[d]^{\tau^{\prime-1}}\\
\gr^{b-1}_V\cM[\eta]\ar[r]&\gr^{b-1}_V\cFcM
}\end{array}
\end{equation}
Moreover, if we identify in a natural way $\gr^b_V\cM[\eta]$ with $i_{0,+}\gr^b_V\cM$, this map is a morphism of $\cR_{\cP^1}$-modules.

\begin{lemme}\label{lem:b<0}
If $b<0$, \eqref{eq:grUV} is an isomorphism of $\cR_{\cP^1}$-modules.
\end{lemme}

\begin{proof}
If $b\in<0$, we assert that
\[
U^b\cFcM=U^{>b}\cFcM+\sum_{\ell\geq0}\partiall_t^\ell(V^b\cM\otimes\ccE^{-t/\hb\tau'}),
\]
which implies that the morphism \eqref{eq:grUV} is onto. Indeed, on the one hand, using the formulas \cite[(A.2.4)]{Bibi01c} recalled above and iterating the inclusion
\begin{align*}
\tau^{\prime-1}(V^{b+1}\cM\otimes\ccE^{-t/\hb\tau'})&\subset\partial_t(V^{b+1}\cM\otimes\ccE^{-t/\hb\tau'})-(\partial_tV^{b+1}\cM\otimes\ccE^{-t/\hb\tau'})\\
&\subset U^{b+1}\cFcM+(V^b\cM\otimes\ccE^{-t/\hb\tau'}),
\end{align*}
we get
\[
\sum_{k\geq0}\tau^{\prime-k}(V^{b+k}\cM\otimes\ccE^{-t/\hb\tau'})\subset (V^b\cM\otimes\ccE^{-t/\hb\tau'})+U^{b+1}\cFcM.
\]
On the other hand, if $b<0$, for any $k\geq1$ we have $V^{b-k}\cM=\partiall_t^kV^{b}\cM+V^{>b-k}\cM$ and
\[
\tau^{\prime k}(V^{b-k}\cM\otimes\ccE^{-t/\hb\tau'})\subset (\tau'\partiall_t+1)^k(V^b\cM\otimes\ccE^{-t/\hb\tau'})+U^{>b}\cFcM.
\]
Notice then that, for $j\geq1$, $\partiall_t^j(V^b\cM\otimes\ccE^{-t/\hb\tau'})\subset U^b\cFcM$ and $\tau^{\prime j}\partiall_t^j(V^b\cM\otimes\ccE^{-t/\hb\tau'})\subset U^{b+j}\cFcM\subset U^{>b}\cFcM$.

The morphism \eqref{eq:grUV} is also injective: one remarks that, given local sections $n_j$ of~$\cM$, if $\sum_j\tau^{\prime j}n_j\otimes\nobreak\ccE^{-t/\hb\tau'}$ is a local section of $U^b\cFcM$, then the dominant coefficient with respect to $\tau^{\prime-1}$ belongs to $V^b\cM$ (by considering the dominant coefficient with respect to $\tau^{\prime-1}$ in an expression like $\sum_p\partiall_t^p(m_p\otimes\ccE^{-t/\hb\tau'})$); arguing as in \cite[Proof of Prop\ptbl4.1, \S(ii)(6)]{Bibi05b}, one gets the injectivity.
\end{proof}

At this point, we have proved the strict specializability of $\cFcM$ along $\tau'=0$. We now prove the existence of the isomorphisms $(*)$ and $(**)$ of Lemma \ref{lem:FourierlocalM}.

Let $\beta$ be such that $\reel\beta\in(-1,0]$. The morphism \eqref{eq:grUV} induces, near~$\hbo$, a morphism $(i_{0,+}\psi_t^\beta\cM,\rN_t)\to(\psi_{\tau'}^\beta\cFcM,\rN_{\tau'})$. One can show that these locally defined morphisms glue together. Setting $\ell_\hbo(\beta)=\reel\beta-(\im\hbo)(\im\beta)$ (\cf \cite[p\ptbl17]{Bibi01c}), if $\ell_\hbo(\beta)<0$, it is an isomorphism near~$\hbo$, according to the Lemma \ref{lem:b<0} with $b=\ell_\hbo(\beta)$. Let us first show that such remains the case if $\ell_\hbo(\beta)=0$. In this case, by definition of strict specializability (\cf \cite[Def\ptbl3.3.8(c)]{Bibi01c}, we know that $\partiall_t:\psi_t^\beta\cM\to\psi_t^{\beta-1}\cM$ is an isomorphism near~$\hbo$, so we conclude using \eqref{eq:comdiag}.

Let us now assume that $\ell_\hbo(\beta)>0$ and let us choose $k\in\NN$ such that $\ell_\hbo(\beta-k)\in(-1,0]$. Near~$\hbo$, we get from \eqref{eq:comdiag} a commutative diagram
\[
\xymatrix{
i_{0,+}\psi_t^\beta\cM\ar[r]\ar[d]_{i_{0,+}\partiall_t^k}&\psi_{\tau'}^\beta\cFcM\ar[d]^{\tau^{\prime-k}}\\
i_{0,+}\psi_t^{\beta-k}\cM\ar[r]&\psi_{\tau'}^{\beta-k}\cFcM
}
\]
where the right vertical map is an isomorphism, by definition, and the choice of $k$ implies that the lower horizontal map is an isomorphism. On the other hand, by the strict specializability of $\cM$ at $t=0$ (\cf \cite[Def\ptbl3.3.8(1b) and Rem\ptbl3.3.9(2)]{Bibi01c}) and the choice of $k$, the map $t^k:\psi_t^{\beta-k}\cM\to\psi_t^\beta\cM$ is an isomorphism, and $\partiall_t^kt^k:\psi_t^{\beta-k}\cM\to\psi_t^{\beta-k}\cM$ is equal to $\prod_{j=0}^{k-1}\big[(\beta-j)\star\hb+\rN_t\big]$. We note that $(\beta-j)\star\hb+\rN_t$ is invertible near~$\hbo$ unless $(\beta-j)\star\hbo=0$. With the conditions $\hbo\in\Delta_0$, $\beta\neq0$, $\reel(\beta-j)\leq0$, $j=0,\dots,k-1$, $\ell_\hbo(\beta-j)>0$, this vanishing only occurs if $j=0$, $\reel\beta=0$ and $\hbo=i$ if $\im\beta<0$, $\hbo=-i$ if $\im\beta>0$. Therefore, on the one hand, the natural morphism \eqref{eq:grUV} induces
\begin{equation}\label{eq:reelbetaneq0}
i_{0,+}\psi_t^\beta\cM_{|\Delta_0}\isom\psi_{\tau'}^\beta\cFcM_{|\Delta_0}\quad\text{if }\reel\beta\in(-1,0).
\end{equation}

On the other hand, if $\beta\in i\RR^*$, one checks similarly that, grading first\footnote{\label{foot:gr}This grading was forgotten in \cite{Bibi05b}.} by the monodromy filtration in order to kill $N_t$, \eqref{eq:grUV} induces an isomorphism
\begin{equation}\label{eq:reelbeta=0}
i_{0,+}\gr_\bbullet^\rM\psi_t^\beta\cM_{|\Delta_0}\isom\gr_\bbullet^\rM\psi_{\tau'}^\beta\cFcM_{|\Delta_0}(-D_\beta).
\end{equation}

Lastly, if $\beta=0$, we consider the isomorphism
\begin{equation}\label{eq:beta=0}
i_{0,+}\psi_t^{-1}\cM_{|\Delta_0}\isom\psi_{\tau'}^{-1}\cFcM_{|\Delta_0}\isoTo{\tau'}\psi_{\tau'}^0\cFcM_{|\Delta_0}.
\end{equation}

Notice that, by definition, $\Psi_{\tau'}^\beta\cFcM=\psi_{\tau'}^\beta\cFcM$. So, at this point, we have obtained \ref{lem:FourierlocalM}$(**)$. In order to get \ref{lem:FourierlocalM}$(*)$, it remains to compare $\psi_t^\beta\cM_{|\Delta_0}$ with $\Psi_t^\beta\cM_{|\Delta_0}$. Arguing in a way similar to that of \cite[Lemma 4.18]{Bibi05b}, we conclude that, for $\reel\beta\in(-1,0]$, the natural inclusion $\psi_t^\beta\cM_{|\Delta_0}\hto\Psi_t^\beta\cM_{|\Delta_0}$ is an isomorphism. This gives \ref{lem:FourierlocalM}$(*)$.

Finally, the functoriality of the isomorphisms \ref{lem:FourierlocalM}$(*)$ and $(**)$ is clear from the construction.
\end{proof}

Let $\cT=(\cM',\cM'',\cCS)$ be an object of $\RTriples(\PP^1)$, such that $\cM',\cM''$ are $\cR_{\cP^1}$-coherent and strictly specializable along $t=0$. Then $\cFcT$ is defined as $(\cFcM',\cFcM'',\cFCS)$, where $\cFcM',\cFcM''$ are as above and $\cFCS$ is defined in \cite[p\ptbl196]{Bibi01c}.

\begin{lemme}\label{lem:FourierlocalT}
Let $\cT$ be as above. Then the isomorphisms of Lemma \ref{lem:FourierlocalM} extend as isomorphisms
\begin{align*}
(\Psi_{\tau'}^{0,\beta}\cFcT,\cN_{\tau'})&\simeq i_{0,+}(\Psi_t^\beta\cT,\cN_t)\quad\text{if $\reel\beta\in(-1,0)$},\\[-9pt]
\tag{$*$}\\[-9pt]
\gr_\bbullet^\rM\Psi_{\tau'}^{0,\beta}\cFcT&\simeq i_{0,+}(\gr_\bbullet^\rM\Psi_t^\beta\cT)\quad\text{if }\beta\in i\RR^*,\\
(\Psi_{\tau'}^{0,0}\cFcT,\cN_{\tau'})&\simeq i_{0,+}(\phi_t^{-1}\cT,\cN_t).\tag{$**$}
\end{align*}
\end{lemme}

\begin{proof}
The point is to prove the compatibility of the corresponding sesquilinear pairings under \ref{lem:FourierlocalM}($*$) and ($**$), up to $\Gamma$-factors that we will analyse, as in \cite[Proof of Prop\ptbl5.8]{Bibi05b}.

Let us fix $\hbo\in\bS$ and let us work in the neighbourhood of~$\hbo$. For~$\beta\neq0$ with $\reel\beta\in(-1,0]$, let us set $\alpha=-\beta-1$ and $b=\ell_\hbo(\beta)$. Let $m',m''$ be local section near $(t=0,\hbo)$ of $V^b\cM',V^b\cM''$ inducing sections $[m'],[m'']$ of $\psi_t^\beta\cM',\psi_t^\beta\cM''$ on $\gr^b_V\cM',\gr^b_V\cM''$ (recall that $\psi_t^\beta\cM_{|\bS}=\Psi_t^\beta\cM_{|\bS}$, \cf \cite[Lemma 3.4.2(2)]{Bibi01c}). We regard $[m'],[m'']$ as sections of $i_{0,+}\psi_t^\beta\cM',i_{0,+}\psi_t^\beta\cM''$ (degree~$0$ with respect to $\eta$). Following \eqref{eq:grUV}, they correspond to sections $[\tau^{\prime-1}m'\otimes\ccE^{-t/\hb\tau'}],[\tau^{\prime-1}m''\otimes\ccE^{-t/\hb\tau'}]$ of $\Psi_{\tau'}^{0,\beta}\cFcM',\Psi_{\tau'}^{0,\beta}\cFcM''$.
By definition, we have, for any $C^\infty$ form $\varphi(t)$ of type $(1,1)$ on $\PP^1$ with compact support in the chart~$t$,
\begin{align*}
\big\langle\psi_{\tau'}^\beta&\cFCS([\tau^{\prime-1}m'\otimes\ccE^{-t/\hb\tau'}],\ovv{[\tau^{\prime-1}m''\otimes\ccE^{-t/\hb\tau'}]}),\varphi\big\rangle\\
&=\res_{s=\alpha\star\hb/\hb}\big\langle\cFCS(m'\otimes\ccE^{-t/\hb\tau'},\ovv{m''\otimes\ccE^{-t/\hb\tau'}}),\varphi\wedge |\tau'|^{2(s-1)}\wh\chi(\tau')\itwopi d\tau'\wedge d\ov{\tau'}\big\rangle\\
&=\res_{s=\alpha\star\hb/\hb}\big\langle\cCS(m',\ovv{m''}),I_{\wh\chi}(t,s,\hb)\varphi\big\rangle,
\end{align*}
where $\wh\chi$ is $C^\infty$ with compact support in $\wh\PP^1$, $\equiv1$ near $\tau'=0$ and $\equiv0$ far from $\tau'=0$, and $I_{\wh\chi}$ is defined for $\reel s>0$ as in \cite[\S3.6.b]{Bibi01c} by
\[
I_{\wh\chi}(t,s,\hb)=\int e^{\hb\ov t/\ov{\tau'}-t/\hb\tau'}\vert\tau'\vert^{2(s-1)}\wh\chi(\tau')\itwopi d\tau'\wedge d\ov{\tau'}.
\]
The last equality above means that $\big\langle\cCS(m',\ovv{m''}),I_{\wh\chi}(t,s,\hb)\varphi\big\rangle$ is holomorphic with respect to~$s$ for $\reel s>0$ and extends as a meromorphic function of~$s$, of which we take the residue at $s=\alpha\star\hb/\hb$.

One first proves, as in \cite[Lemma 3.6.6]{Bibi01c} that $\big\langle\cCS(m',\ovv{m''}),I_{\wh\chi}(t,s,\hb)\varphi\big\rangle$ has poles on sets $s=\gamma\star\hb/\hb$ ($\hb\in\bS$) with $\reel\gamma<\reel\alpha$ or $\gamma=\alpha$. Moreover, only the first case occurs if $\varphi$ vanishes along $t=0$, and we can thus assume that $\varphi\equiv\itwopi dt\wedge d\ov t$ near $t=0$. As the residue at $s=\alpha\star\hb/\hb$ does not depend on the such a $\varphi$, we can assume $\varphi=\chi^2\itwopi dt\wedge d\ov t$ with $\chi\equiv1$ near $t=0$. We will compare $\big\langle\cCS(m',\ovv{m''}),I_{\wh\chi}(t,s,\hb)\chi\big\rangle$ and $\big\langle\cCS(m',\ovv{m''}),\mt^{2s}\chi^2\big\rangle$ before taking their residue.

If we denote by $T$ the distribution $\chi\cCS(m',\ovv{m''})$, and by $\cF$ the Fourier transform with kernel $e^{\ov{t\tau}\hb-t\tau/\hb}\itwopi d\tau\wedge d\ov\tau$ (\cf \cite[Rem\ptbl3.6.17]{Bibi01c}), these functions are respectively written as
\[
\int\cF T(\tau,\hb)|\tau|^{-2(s+1)}\wh\chi(\tau)\itwopi d\tau\wedge d\ov\tau\quad\text{and}\quad\int\cF T(\tau,\hb)\wh I_\chi(\tau,s,\hb)\itwopi d\tau\wedge d\ov\tau
\]
with $\wh I_\chi(\tau,s,\hb)\defin\cF^{-1}(\mt^{2s}\chi)$ (this is analogous to (3.6.25) and (3.6.26) in \loccit).

We note that $(1-\wh\chi(\tau))\cF T(\tau,\hb)$ is $C^\infty$ with compact support, so its inverse Fourier transform $\eta$ is in the Schwartz class, and
\[
\int\cF T(\tau,\hb)\wh I_\chi(\tau,s,\hb)(1-\wh\chi(\tau))\itwopi d\tau\wedge d\ov\tau
\]
reads $\int\eta\chi\mt^{2s}\itwopi dt\wedge d\ov t$, so takes the form $\Gamma(s+1)h(s,\hb)$, where $h$ is entire with respect to~$s$. Using the computation in \cite[Lemma 5.14]{Bibi05b} for $\wh I_\chi$ (replacing $t$ there with $\tau$ here) and in particular \cite[(5.17)]{Bibi05b}, we finally find, if $\beta\neq0$,
\begin{multline*}
\frac{\Gamma(1+\alpha\star\hb/\hb)}{\Gamma(-\alpha\star\hb/\hb)}\res_{s=\alpha\star\hb/\hb}\big\langle\cCS(m',\ovv{m''}),I_{\wh\chi}(t,s,\hb)\chi\big\rangle\\
=\res_{s=\alpha\star\hb/\hb}\big\langle\cCS(m',\ovv{m''}),\mt^{2s}\chi\big\rangle.
\end{multline*}
Let us write $\sfrac{\Gamma(1+\alpha\star\hb/\hb)}{\Gamma(-\alpha\star\hb/\hb)}=\mu\ov\mu$ as in \cite[Lemma 5.5]{Bibi05b}, and $D_\mu=-D_\beta$. Using \eqref{eq:reelbetaneq0} or \eqref{eq:reelbeta=0} we find (grading only if $\beta\in i\RR^*$, that is, if $D_\mu\neq\emptyset$)
\[
(\gr_\bbullet^\rM)(\Psi_{\tau'}^\beta\cFcM'(D_\mu),\Psi_{\tau'}^\beta\cFcM''(D_\mu),\mu\ovv\mu\cFCS)
\simeq i_{0,+}(\gr_\bbullet^\rM)(\Psi_t^\beta\cM',\Psi_t^\beta\cM'',\cCS).
\]
\ref{lem:FourierlocalT}$(*)$ follows then from \cite[Lemma 5.6]{Bibi05b}.

Arguing similarly for $\beta=0$, we note that \ref{lem:FourierlocalT}$(**)$ is by definition (\cf \cite[(3.6.18) \& Rem\ptbl3.6.20]{Bibi01c}).
\end{proof}

Arguing as in \cite{Bibi05b} by applying \cite[\S6.3]{Bibi01c}, we get \ref{th:FourierlocalT}$(*)$ and $(**)$ for $c=0$. It is then not difficult to check that replacing $t$ with $t+c$ corresponds to twisting $\wh\cT$ by $\ccE^{c/\hb\tau'}$, and to get $(*)$ and $(**)$ for any $c\in\CC$ in the same way. One checks that \ref{th:FourierlocalT}$(*)$ and $(**)$ hold after ramification by using \cite[Rem\ptbl2.3.3]{Bibi06b}.
\end{proof}

\subsubsections{The integrable case}
Let us now assume that $(\cT,\cS)$ is integrable. We will describe the compatibility between the actions of $\hb^2\partial_\hb$ in Theorem \ref{th:FourierlocalT}. Recall that, with this assumption, the numbers~$\beta$ such that $\Psi_t^\beta\cT\neq0$ are real, and thus so are the numbers~$\beta$ such that $\Psi_t^\beta\wh\cT\neq0$. In the computation above, we can set $b=\beta$ and do not worry about the local dependence with respect to~$\hbo$ (\cf\cite[Chap\ptbl7]{Bibi01c}).

The morphism \eqref{eq:grUV} is compatible with the natural action of $\hb^2\partial_\hb$ on $i_{0,+}\gr_V^b\cM$ on the one hand, and the action of $\hb^2\partial_\hb-\tau'\partial_\tau'$ on $\gr_{b+1}^V\cFcM$ on the other hand (where the action of $\hb^2\partial_\hb$ comes from the natural action of $\hb^2\partial_\hb$ on $\cFcM$, \cf Lemma \ref{lem:hbdhbtdt}). As we set $\beta=b$, we thus have, for $\beta<0$,
\[
(i_{0,+}\psi_t^\beta\cM,\hb^2\partial_\hb)\isom(\psi_{\tau'}^{\beta+1}\cFcM, \hb^2\partial_\hb-(\beta+1)\hb-\rN_{\tau'}).
\]
As multiplication by~$\tau'$ commutes with $\hb^2\partial_\hb$ and $\rN_{\tau'}$, we obtain
\[
(i_{0,+}\psi_t^\beta\cM,\hb^2\partial_\hb)\isom
\begin{cases}
(\psi_{\tau'}^\beta\cFcM, \hb^2\partial_\hb-(\beta+1)\hb-\rN_{\tau'})&\text{if }\beta\in(-1,0),\\
(\psi_{\tau'}^0\cFcM, \hb^2\partial_\hb-\rN_{\tau'})&\text{if }\beta=-1.
\end{cases}
\]
In a way analogous to that of Lemma \ref{lem:FourierlocalT}, we conclude
\begin{align*}
(i_{0,+}\psi_t^\beta\cT,\hb^2\partial_\hb)&\isom (\psi_{\tau'}^\beta\cFcT, \hb^2\partial_\hb-(\beta+1)\hb-\rN_{\tau'})\quad\text{if }\beta\in(-1,0),\\
(i_{0,+}\phi_t^{-1}\cT,\hb^2\partial_\hb)&\isom (\psi_{\tau'}^0\cFcM, \hb^2\partial_\hb-\rN_{\tau'}).
\end{align*}
As $\psi_{\tau'}^\beta$ commutes with the direct image by $\wh p$ in our context, we get
\begin{align*}
(\Psi_t^\beta\cT,\hb^2\partial_\hb)&\isom(\Psi_{\tau'}^{0,\beta}\wh\cT, \hb^2\partial_\hb-(\beta+1)\hb-\rN_{\tau'})\quad\text{if }\beta\in(-1,0),\\
(\phi_t^{-1}\cT,\hb^2\partial_\hb)&\isom(\Psi_{\tau'}^{0,0}\wh\cT, \hb^2\partial_\hb-\rN_{\tau'}),
\end{align*}
and, grading with respect to $\rM_\bbullet$ kills $\rN_{\tau'}$ and gives, for any $\ell\in\ZZ$,
\begin{align}
(\gr_\ell^\rM\Psi_t^\beta\cT,\hb^2\partial_\hb)&\isom(\gr_\ell^\rM\Psi_{\tau'}^{0,\beta}\wh\cT, \hb^2\partial_\hb-(\beta+1)\hb)\quad\text{if }\beta\in(-1,0),\label{eq:PsiintMicro}\\
(\gr_\ell^\rM\phi_t^{-1}\cT,\hb^2\partial_\hb)&\isom(\gr_\ell^\rM\Psi_{\tau'}^{0,0}\wh\cT, \hb^2\partial_\hb).\label{eq:phiintMicro}
\end{align}
Lastly, we get a similar result after translating $t$ by $c\in\CC$.

\section{Rescaling}\label{sec:appendiceB}
In this appendix \ref{sec:appendiceB}, we recall the notion of rescaling of an integrable twistor structure considered in \cite[Def\ptbl4.1]{H-S06}. We also explain how the ``no ramification'' condition is related to a good behaviour of the rescaling.

Let $(\cH',\cH'',\cCS,\nabla)$ be an integrable twistor structure (\cf \S\ref{subsec:intvartw} with~$X$ reduced to a point). In order to clarify notation, we will denote by $\eta$ the coordinate denoted by~$\hb$ before. By integrability, $\nabla_{\eta^2\partial_\eta}$ acts on $\cH',\cH''$ in a way compatible with $\cCS$. For the sake of simplicity, we will denote by $\eta^2\partial_\eta$ this action. The bundles $\cH',\cH''$ are a priori defined on some open neighbourhood of $\{|\eta\leq1|\}$ but, using the gluing defined by $\cCS$ and its compatibility with $\nabla$, we can assume that they are defined, together with the action of $\eta^2\partial_\eta$, on the whole complex line~$\CC_\eta$ with coordinate $\eta$ and that $\cCS$ is the restriction to $\bS$ of a sesquilinear pairing compatible with $\nabla$
\[
\cC:\cH'_{|\CC_\eta^*}\otimes_{\cO_{\CC_\eta^*}}\ovv{\cH''_{|\CC_\eta^*}}\to\cO_{\CC_\eta^*}.
\]
Let us consider the map $\mu:\CC_{\tau'}\times\nobreak\Omega_0\to\CC_\eta$ defined by $\mu(\tau',\hb)=\eta=\tau'\hb$ (for $\tau'\neq0$, we will set $\tau=\tau^{\prime-1}$; this corresponds to the coordinate~$\tau$ in \S\ref{subsec:Fourier-Laplace}). The pull-backs $\mu^*\cH',\mu^*\cH''$ are holomorphic bundles on $\CC_{\tau'}\times\nobreak\Omega_0$. When restricted to the open set $\tau'\neq0$, they are equipped with a flat meromorphic connection having a pole of Poincar\'e rank one along $\hb=0$. We have
\[
\hb^2\partial_\hb(1\otimes m)=\tau^{\prime-1}(1\otimes\eta^2\partial_\eta m)\quad\text{and}\quad \tau^{\prime2}\partiall_{\tau'}(1\otimes m)=1\otimes\eta^2\partial_\eta m.
\]
For a fixed $\tau_o\in\CC^*$, denote by $\mu_{\tau_o}^*$ the composition of $\mu^*$ with the restriction to $\tau'=\tau_o^{-1}$. This defines $\mu_{\tau_o}^*\cH',\mu_{\tau_o}^*\cH''$.

In order to define the rescaling of the sesquilinear pairing, we need to be careful. Indeed, the rescaling is not compatible with twistor conjugation, as we have $\mu_{\tau_o}^*\ovv{\cH''_{|\CC^*}}=\ovv{\mu_{\ov\tau_o^{-1}}^*\cH''_{|\CC^*}}$. We therefore need an identification $\mu_{\tau_o}^*\cH''_{|\CC^*}\simeq\mu_{\ov\tau_o^{-1}}^*\cH''_{|\CC^*}$ in order to get a sesquilinear pairing $\mu_{\tau_o}^*\cC$ such that $(\mu_{\tau_o}^*\cH',\mu_{\tau_o}^*\cH'',\mu_{\tau_o}^*\cC)$ is a twistor structure. This identification is obtained through the parallel transport with respect to the holomorphic connection on $\mu^*\cH''_{|\CC^*\times\CC^*}$ from $\tau_o\times\nobreak\CC^*$ to $\ov\tau_o^{-1}\times\nobreak\CC^*$ along the segment between $\tau_o$ and $\ov\tau_o^{-1}$.

One can rewrite this definition in a way independent of $\tau_o\in\CC^*$. For that purpose, let us denote by $\cL',\cL''$ the local systems on $\CC^*_\eta$ determined by $(\cH',\nabla)$ and $(\cH'',\nabla)$. We get a pairing $\cL'_{\bS}\otimes\sigma^{-1}\ov{\cL''_\bS}\to \CC_\bS$ by restricting $\cCS$ to these local systems. We remark that, on $\bS$, $\sigma$ coincides with the involution $\iota:\eta\mto-\eta$, and we can extend (by parallel transport) in a unique way $\cCS$ as a pairing
\[
\wt\cC:\cL'\otimes\iota^{-1}\ov{\cL''}\to \CC_{\CC^*_\eta}.
\]
Taking the pull-back by $\mu$ commutes with $\iota$ (with respect to $\eta$ and to~$\hb$), and restricting to $\CC^*_{\tau'}\times\nobreak\bS$ gives a pairing
\[
\mu^{-1}\wt\cC:\mu^{-1}\cL'_{\CC^*_{\tau'}\times\bS}\otimes\iota^{-1}\ov{\mu^{-1}\cL''_{\CC^*_{\tau'}\times\bS}}\to \CC_{\CC^*_{\tau'}\times\bS}.
\]
Identifying now $\iota_{|\bS}$ and $\sigma_{|\bS}$ in the~$\hb$-variable gives the desired sesquilinear pairing $\mu^*\cCS$ at the level of local systems, and thus at the level of holomorphic bundles. Clearly, it is nondegenerate.

\begin{definition}[Rescaling]
Let $\cT=(\cH',\cH'',\cCS)$ be an integrable twistor structure. The rescaling $\mu^*\cT$ is the triple $(\mu^*\cH',\mu^*\cH'',\mu^*\cCS)$ defined as above.
\end{definition}

We have the following properties:
\begin{itemize}
\item
By construction (and because $\mu^*\cCS$ is nondegenerate), $\mu^*\cT$ is an integrable variation of twistor structure on $\CC^*_{\tau'}$.
\item
Functoriality: This mainly reduces to showing that, given $\cT=(\cH',\cH'',\cCS)$ with $\cH',\cH''$ defined on some neighbourhood of $\{\eta\leq1\}$, the extension of $\cH',\cH''$ to~$\CC_\eta$ by using the gluing is functorial. This is done as follows. The pairing $\cCS$ induces an isomorphism $\cH'\simeq\ovv\cH{}^{\prime\prime\vee}$ on some neighbourhood of $\{\eta=1\}$, which allows one to extend~$\cH'$ as a bundle on $\CC_\eta$. If $\varphi:\cT_1\to\cT_2$ is a morphism, the previous isomorphism is compatible with $\varphi':\cH'_2\to\cH'_1$ and $\ovv\varphi^{\prime\prime\vee}:\ovv\cH{}_2^{\prime\prime\vee}\to\ovv\cH{}_1^{\prime\prime\vee}$ by definition. Therefore, it extends to $\CC_\eta$.
\item
Compatibility with adjunction: Restricted to local systems on $\{\eta=1\}$, the adjoint $\cCS^*$ of $\cCS$ is $\sigma^{-1}\cCS^\dag$, where $\cCS^\dag$ is the adjoint with respect to the standard conjugation. So, working on local systems,
\[
\cCS^*=\sigma^{-1}\cCS^\dag=\iota^{-1}\wt\cC^\dag_{|\eta=1}
\]
and
\begin{align*}
\mu^*\cCS^*=(\mu^{-1}\iota^{-1}\wt\cC^\dag)_{|\hb=1}&=(\iota^{-1}\mu^{-1}\wt\cC^\dag)_{|\hb=1}\\
&=\sigma^{-1}(\mu^{-1}\wt\cC)^\dag_{|\hb=1}=(\mu^*\cCS)^*.
\end{align*}
\item
Compatibility with Tate twist: For $k\in\hZZ$,
\[
\cT(k)\defin(\cH',\cH'',(i\hb)^{-2k}\cCS),
\]
so
\[
\mu^*(\cT(k))=(\mu^*\cH',\mu^*\cH'',\tau^{\prime-2k}(i\hb)^{-2k}\mu^*\cCS).
\]
We therefore have an isomorphism
\[
\varphi:(\mu^*\cT)(k)\isom\mu^*(\cT(k)),
\]
with $\varphi=(\varphi',\varphi'')$ and $\varphi'=\tau^{\prime-2k}\id_{\mu^*\cH'}$, $\varphi''=\id_{\mu^*\cH''}$.
\end{itemize}

\begin{proposition}\label{prop:Fourier-rescaling}
If $(\cT,\cS)$ is the polarized twistor $\cD$-module of weight~$w$ associated to a variation of Hodge structure of weight~$w$ as in Proposition \ref{prop:RFMtw}, then, when restricted to $\tau\neq0,\infty$, the Fourier-Laplace transform $\wh\cT$ is identified with the rescaling of its fibre at $\tau=1$ as defined above.
\end{proposition}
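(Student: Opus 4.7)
The plan is to exhibit an explicit isomorphism $\wh\cT|_{\tau\neq0,\infty}\isom\mu^*\wh\cT_1$ of integrable polarized variations of twistor structure and to verify compatibility with each piece of structure separately. The key driver is the elementary scaling identity for the Fourier kernel,
\[
\cE^{-t\tau/\hb}=\cE^{-t/\eta}\quad\text{under the substitution }\eta=\hb/\tau,
\]
which expresses that the kernel depends only on the product $t\tau/\hb$. Concretely, let $\sigma:\CC^*_\tau\times\Omega_0\to\CC_\eta$ be given by $\sigma(\tau,\hb)=\hb/\tau$; via the identification $\tau'=1/\tau$ this is the map $\mu$ of Appendix~\ref{sec:appendiceB}. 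Using that $p^+\wt\cM$ is pulled back from $\PP^1$, the kernel identity upgrades to a canonical isomorphism of $\cR$-modules with integrable connection in the $t$-direction,
\[
\cFcM|_{\tau\neq0,\infty}\simeq(\id_{\PP^1}\times\sigma)^{+}\bigl(p_\eta^{+}\wt\cM\otimes\cE^{-t/\eta}\bigr).
\]
Since smooth pullback commutes with the direct image along $\wh p$ (the relevant modules being strict by regularity and by \cite[Th.\,3.3.15]{Bibi01c}), taking $\wh p_+$ produces the desired isomorphism of the underlying bundles of $\wh\cT|_{\tau\neq0,\infty}$ with those of $\mu^*\wh\cT_1$.

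I would then verify compatibility with the integrable structure, \ie the action of $\hb^2\partial_\hb$. This is exactly the content of Lemma~\ref{lem:hbdhbtdt}: one has $(\hb^2\partial_\hb+\tau\partiall_\tau)(m\otimes\cE^{-t\tau/\hb})=(\hb^2\partial_\hb m)\otimes\cE^{-t\tau/\hb}$. Combined with $\partiall_\tau(m\otimes\cE^{-t\tau/\hb})=-tm\otimes\cE^{-t\tau/\hb}$, and after the substitution $\eta=\hb/\tau$ (under which $\eta^2\partial_\eta$ on $\wh\cT_1$ corresponds to $-\tau^2\partial_\tau$ via the chain rule), the prescription of Appendix~\ref{sec:appendiceB} for the rescaled action, $\hb^2\partial_\hb(1\otimes m')=\tau'^{-1}(1\otimes\eta^2\partial_\eta m')$ and $\tau'^2\partiall_{\tau'}(1\otimes m')=1\otimes\eta^2\partial_\eta m'$, matches term by term the induced $\hb^2\partial_\hb$- and $\partiall_\tau$-actions on $\wh\cT|_{\tau\neq0,\infty}$ coming from the direct image of $\cFcM$.

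For the sesquilinear pairing, I would invoke the definition of $\cFCS$ in \cite[p.\,196]{Bibi01c} (compare also \S\ref{subsec:Fourier-Laplace}) as a residue involving the factor $e^{\hb\ov{t\tau}-t\tau/\hb}$, which is the unitary Fourier kernel on $|\hb|=|\tau|=1$. After restriction to $\tau\neq0,\infty$ and identification of local systems on $\bS$, this factor is manifestly the pullback via $\sigma$ of its analogue for $\wh\cT_1$. The key point is that the parallel-transport identification $\mu^*\cH''_{|\CC^*}\simeq\mu^*_{\ov\tau^{-1}}\cH''_{|\CC^*}$ used in Appendix~\ref{sec:appendiceB} to define $\mu^*\cCS$ is realized on the Fourier side by the flat extension of $e^{-t\tau/\hb}$ away from $\hb=0$: the extension $\wt\cC$ of $\cFCS$ (in the sense of Appendix~\ref{sec:appendiceB}) coincides with the pullback of $\wt\cC$ for $\wh\cT_1$.

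The main obstacle I would expect is precisely the bookkeeping of this last step, namely reconciling the twistor conjugation $\sigma:\hb\mto-1/\ov\hb$ with the pullback $\mu^*$ at the level of sesquilinear pairings; the conjugation does not commute naively with rescaling, and one has to track the parallel-transport identification along the segment from $\tau_o$ to $\ov\tau_o^{-1}$ carefully. The other two steps (bundles, integrable structure) are essentially tautological consequences of the kernel identity and Lemma~\ref{lem:hbdhbtdt}; the hypothesis that $\cT$ comes from a variation of Hodge structure (so that the `no ramification' condition is automatic) is not used in the construction itself, but ensures that the objects on both sides are genuine integrable polarized pure twistor structures, so the identification of raw data is an identification of integrable polarized twistor structures.
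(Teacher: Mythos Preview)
Your kernel identity and the integrable-structure step via Lemma~\ref{lem:hbdhbtdt} are correct and are indeed what drives the identification. The gap is in your first step: the phrase ``using that $p^+\wt\cM$ is pulled back from $\PP^1$'' is where the argument slips. The module $p^+\wt\cM$ is pulled back from $\cP^1=\PP^1\times\Omega_0$, not from $\PP^1$ alone; it genuinely depends on~$\hb$. Consequently $(\id_{\PP^1}\times\sigma)^{+}\bigl(p_\eta^{+}\wt\cM\otimes\cE^{-t/\eta}\bigr)$ has underlying sheaf ``$\wt\cM$ with~$\hb$ replaced by $\hb/\tau$'', which is \emph{not} the same $\cO$-lattice as $p^+\wt\cM$ near $\hb=0$. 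The kernel identity handles only the exponential factor, not the coefficient module, and for an arbitrary strict $\cR_{\cP^1}$-module there is no reason these two lattices should agree. So the displayed isomorphism of $\cR$-modules before direct image is not justified, and base change cannot be invoked.

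This is exactly where the Hodge hypothesis is used, contrary to your closing remark. The paper first reduces to weight~$0$ by a Tate twist, then invokes \cite[\S2.b--c]{Bibi05}; the relevant content is recalled in~\S\ref{subsec:FLVHS}: for the Rees module $R_FM$ one has the explicit identification
\[
\wh{R_FM}_\loc\simeq\CC[\tau,\tau^{-1}]\otimes_\CC G_0^{(F)},
\]
with $\hb$ acting as $\tau\otimes\partial_t^{-1}$ and $\partiall_\tau$ as $\hb(\partial_\tau\otimes1)-1\otimes t$. Setting $\eta=\hb/\tau$, this says that $\eta$ acts as $1\otimes\partial_t^{-1}$ independently of~$\tau$, i.e.\ the bundle together with its $\partiall_\tau$- and $\hb^2\partial_\hb$-actions is precisely $\mu^*$ of its fibre at $\tau=1$. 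The proof of that isomorphism (\cite[Lemma~2.1]{Bibi05}) uses the saturation \eqref{eq:satdtm} and the generation property of the good filtration $F^\cbbullet M$; it is a computation specific to Rees modules, not a formal pullback statement. The sesquilinear pairing is then matched by \cite[Lemma~2.4]{Bibi05}, which carries out on $G_0^{(F)}$ the Fourier-kernel comparison you sketch, but \emph{after} the direct image has been taken. Your outline for the pairing and for the integrable structure is therefore in the right spirit; what is missing is the bundle identification, and that requires the Rees-module computation rather than a kernel identity alone.
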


\begin{proof}[Sketch of proof]
One first reduces to weight~$0$, by using the compatibility of rescaling and Fourier-Laplace transform with Tate twist by $w/2$. One can also assume that $\cS=(\id,\id)$. Then the result follows from \cite[\S2b \& 2c]{Bibi05} (in particular, Lemma~2.4 in \loccit).
\end{proof}

In the remaining part of this section, we explain why a good behaviour at $\tau'=0$ of the rescaled twistor structure imposes the ``no ramification'' condition of \S\ref{subsec:specorigin}. Instead of working on $\{\tau'\neq0\}$, we now work on the whole line $\CC_{\tau'}$. Let us set $\cH=\cH'\text{ or }\cH''$ and $\wt\cM=\mu^*\cH[\tau^{\prime-1}]$. If we set $X=\CC_{\tau'}$, then $\wt\cM$ is an integrable $\cR_\cX[\tau^{\prime-1}]$-module.

\begin{proposition}
If $\wt\cM$ is strictly specializable with ramification and exponential twist at $\tau'=0$ (in the sense of \cite{Bibi01c} and \cite{Bibi06b}), then $(\cH,\nabla_{\partial_\eta})$ has no ramification.
\end{proposition}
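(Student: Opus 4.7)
The plan is to compare the formal structure of $\wt\cM=\mu^*\cH[\tau^{\prime-1}]$ along $\tau'=0$, which by hypothesis has the rigid form imposed by wild twistor theory, with the Levelt-Turrittin decomposition of $\cH$ at $\eta=0$ obtained by pullback along $\mu$. The rigidity of the $\hb$-dependence in the wild twistor setting will force the Levelt-Turrittin exponents of $\cH$ to be polynomials in $\eta^{-1}$, hence eliminate ramification.

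First, recall from \cite{Bibi06b} that strict specializability of $\wt\cM$ at $\tau'=0$ with ramification and exponential twist means that, after a ramification $\tau'=s^q$, the formal module $\CC\lcr s\rcr\otimes\wt\cM$ decomposes as $\bigoplus_j \wt\cR_j\otimes\cE^{\psi_j(s)/\hb}$, with $\psi_j(s)\in s^{-1}\CC[s^{-1}]$ and each $\wt\cR_j$ regular along $s=0$. The crucial feature is that the $\hb$-dependence of each exponent is exactly a simple pole, reflecting the Poincar\'e rank-one condition in the~$\hb$-direction.

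Second, write the Levelt-Turrittin decomposition of $\cH$ at $\eta=0$: after a ramification $\eta=\eta_1^p$ for some $p\geq1$,
\[
\CC\lcr \eta_1\rcr\otimes\cH\simeq\tbigoplus_i(\cH_i\otimes\cE^{\varphi_i(\eta_1)}),
\]
with $\varphi_i\in\eta_1^{-1}\CC[\eta_1^{-1}]$ and $\cH_i$ regular. Since $\nabla$ has a pole of order two on $\cH$, the slopes are $\leq1$, hence $\varphi_i(\eta_1)=\sum_{j=1}^{p}a_{i,j}\eta_1^{-j}$. For any fixed $\hbo\in\Omega_0\setminus\{0\}$, the map $\mu_{\hbo}:\tau'\mto\tau'\hbo$ is a biholomorphism sending $0$ to $0$, so uniqueness of Levelt-Turrittin at $\tau'=0$ forces the pull-back to yield, after ramification $\tau'=s^p$, the exponents
\[
\varphi_i\big((\tau'\hb)^{1/p}\big)=\sum_{j=1}^{p}a_{i,j}\,s^{-j}\hb^{-j/p}.
\]
On the other hand, by the assumption this pullback must agree, up to reindexing of summands, with the restriction of the family $\{\psi_j(s)/\hb\}$ to a neighborhood of $\hbo$.

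Third, equating a single monomial $a_{i,j}s^{-j}\hb^{-j/p}$ with a term of the form $c\,s^{-k}/\hb$ in $\psi(s)/\hb$ (expanded as a Laurent series in $s$) forces $\hb^{-j/p}=\hb^{-1}$, i.e.\ $j=p$. Hence for each $i$ only the coefficient $a_{i,p}$ can be nonzero, so $\varphi_i(\eta_1)=a_{i,p}\eta_1^{-p}=a_{i,p}/\eta$, a polynomial in $\eta^{-1}$. Consequently the Levelt-Turrittin decomposition of $\cH$ already holds without any ramification of $\eta$, which is exactly the no-ramification condition of \S\ref{subsec:specorigin}. The main obstacle in writing this out rigorously will be to justify the identification of the two formal decompositions in a common ``two-variable'' formal neighborhood of $(\tau',\hb)=(0,\hbo)$: one must invoke the uniqueness of the Levelt-Turrittin decomposition (fiberwise in $\hbo$, with $\hb$-dependence tracked as a parameter) and confront it with the uniqueness of the wild twistor decomposition of \cite{Bibi06b}, the key quantitative input being the rigidity $\deg_\hb(\exp\text{-factor})=-1$.
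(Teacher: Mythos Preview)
Your approach is genuinely different from the paper's. The paper avoids any direct comparison of Levelt--Turrittin decompositions and instead works with Bernstein-type relations: for $m\in H$ with minimal equation $b(\eta\partial_\eta)m=\eta P(\eta,\eta\partial_\eta)m$, substituting $\eta=\tau'\hb$ and $\eta\partial_\eta=\hb^{-1}\tau'\partiall_{\tau'}$ and clearing $\hb^{\deg b}$ yields a candidate Bernstein relation
\[
\prod_\beta(\tau'\partiall_{\tau'}-\beta\hb)^{\nu_\beta}(1\otimes m)=\hb^{\deg b}\,(\tau'\hb)\,P(\tau'\hb,\hb^{-1}\tau'\partiall_{\tau'})(1\otimes m).
\]
The requirement that the right-hand side be $\hb$-regular (part of strict specializability in the sense of \cite{Bibi01c,Mochizuki07}) forces the smallest positive Newton slope of the operator $b(\eta\partial_\eta)-\eta P(\eta,\eta\partial_\eta)$ to be $\geq1$, while the order-two pole forces the Katz invariant to be $\leq1$; hence only slopes $0$ and $1$ occur. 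Applying this to every twist $\wt\cM\otimes\cE^{-c/\hb\tau'}$ gives the ``no ramification'' condition. This is elementary and self-contained.

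Your strategy---matching exponential factors under pullback and reading off $\hb^{-j/p}=\hb^{-1}$---encodes the same slope constraint more geometrically, but the step you flag as ``the main obstacle'' is genuinely nontrivial. The wild twistor decomposition is formal in $\tau'$ (or $s$) with coefficients holomorphic in $\hb$, and its exponents have the rigid shape $\psi(s)/\hb$; the pulled-back Levelt--Turrittin, once you write $\eta_1=(\tau'\hb)^{1/p}$, involves a $p$-th root of $\hb$, which lies outside the wild twistor formalism (one ramifies only in the base variable, never in $\hb$). To make your matching rigorous you must restrict to fibres $\hb=\hbo\neq0$, argue that the wild twistor decomposition specializes there to the ordinary Levelt--Turrittin of the fibre (this requires justification, and you must also reconcile the a~priori distinct ramification indices $p$ and $q$ by passing to a common multiple), and then vary $\hbo$ using that the reindexing permutation is locally constant. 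This can be carried out, but it is precisely the work that the paper's Newton-polygon computation replaces by a one-line slope inequality. The paper's route is shorter; yours makes more transparent why the constraint is exactly ``the $\hb$-exponent equals $-1$''.
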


\begin{proof}
It will be easier to work in an algebraic framework. One can find a free $\CC[\eta]$-module $H$ with an algebraic connection having a double pole at $\eta=0$ and no other pole, such that $(\cH,\nabla_{\partial_\eta})=(\cO_{\CC_\eta}\otimes_{\CC[\eta]}\nobreak H,\nabla_{\partial_\eta})$. Notice then that $\wt\cM$ is the analytization of $\wt M\simeq\CC[\tau',\tau^{\prime-1}]\otimes_\CC\nobreak H$, where the action of~$\hb$ is defined as $\tau^{\prime-1}\otimes\eta$.

Let us try to find a Bernstein relation (in the sense of \cite{Bibi01c} or, more generally with parabolic structure, of \cite{Mochizuki07}) for elements of $\wt M$ at $\tau'=\nobreak0$. Let $m\in H$. The differential equation of minimal degree satisfied by~$m$ can be written as $b(\eta\partial_\eta)m=\eta P(\eta,\eta\partial_\eta)m$, where $b\in\CC[s]\moins\{0\}$ and~$P$ is an operator in $\eta\partial_\eta$ with coefficients in $\CC[\eta]$. Let us set $b(s)=\prod_{\beta\in\CC}(s-\nobreak\beta)^{\nu_\beta}$. One deduces
\[
\prod_{\beta\in\CC}(\tau'\partiall_{\tau'}-\beta\hb)^{\nu_\beta}(1\otimes m)=\hb^{\deg b}\cdot (\tau'\hb)\cdot P(\tau'\hb,\hbm\tau'\partiall_{\tau'})(1\otimes m).
\]
For such a relation to be a Bernstein relation in the sense of \cite{Bibi01c,Mochizuki07} two conditions must be fulfilled.
\begin{enumerate}
\item\label{enum:pentes}
The right-hand side should have no pole in~$\hb$; this is possible if and only if the smallest positive slope of the Newton polygon of the equation $b(\eta\partial_\eta)-\eta P(\eta,\eta\partial_\eta)$ is $\geq1$; but we assumed that the order of the pole of the connection is at most two, hence the biggest slope of the Newton polygon (Katz invariant) is $\leq1$. Both conditions imply that the Newton polygon has only the slopes~$0$ and~$1$.
\item
For any~$\beta$ with $\nu_\beta\neq0$, the function $\hb\mto\beta\hb$ should be written as $\hb\mto \gamma\hb^2+b\hb+\ov\gamma$ for some $\gamma\in\CC$ and $b\in\RR$ (\cf \cite{Mochizuki07}). This implies $\gamma=0$ and $\beta\in\RR$.
\end{enumerate}

We should apply these conditions to any exponentially twisted module $\wt\cM\otimes\cE^{-\varphi/\hb}$. Condition \eqref{enum:pentes} applied to any $\wt\cM\otimes\cE^{-c/\hb\tau'}$ implies
\begin{enumerate}\setcounter{enumi}{2}
\item
$(\cH,\nabla_{\partial_\eta})$ satisfies the ``no ramification'' condition at $\eta=0$.\qedhere
\end{enumerate}
\end{proof}

\backmatter
\newcommand{\SortNoop}[1]{}\def\cprime{$'$}
\providecommand{\bysame}{\leavevmode\hbox to3em{\hrulefill}\thinspace}
\providecommand{\MR}{\relax\ifhmode\unskip\space\fi MR }
\providecommand{\MRhref}[2]{%
  \href{http://www.ams.org/mathscinet-getitem?mr=#1}{#2}
}
\providecommand{\href}[2]{#2}


\begin{thebibliography}{99}

\bibitem{C-F-I-V92}
S.~Cecotti, P.~Fendley, K.~Intriligator, and C.~Vafa, \emph{{A new
  supersymmetric index}}, Nuclear Phys. B \textbf{386} (1992), 405--452.

\bibitem{Deligne70}
P.~Deligne, \emph{{\'E}quations diff\'erentielles {\`a} points singuliers
  r\'eguliers}, Lect. Notes in Math., vol. 163, Springer-Verlag, 1970.

\bibitem{DeligneHII}
\bysame, \emph{{Th\'eorie de {Hodge} II}}, Publ. Math. Inst. Hautes {\'E}tudes
  Sci. \textbf{40} (1971), 5--57.

\bibitem{Deligne8406}
\bysame, \emph{{Th\'eorie de Hodge irr\'eguli\`ere (mars 1984 \& ao{\^u}t
  2006)}}, {Singularit\'es irr\'eguli\`eres, correspondance et documents},
  Documents math\'ematiques, vol.~5, Soci{\'e}t{\'e} Math{\'e}matique de
  France, Paris, 2007, pp.~109--114 \& 115--128.

\bibitem{D-S02a}
A.~Douai and C.~Sabbah, \emph{{Gauss-Manin systems, Brieskorn lattices and
  Frobenius structures (I)}}, Ann. Inst. Fourier (Grenoble) \textbf{53} (2003),
  no.~4, 1055--1116.

\bibitem{Hertling00}
C.~Hertling, \emph{{Frobenius manifolds and moduli spaces for singularities}},
  Cambridge Tracts in Mathematics, vol. 151, Cambridge University Press, 2002.

\bibitem{Hertling01}
\bysame, \emph{{$tt^*$ geometry, Frobenius manifolds, their connections, and
  the construction for singularities}}, J.~reine angew. Math. \textbf{555}
  (2003), 77--161.

\bibitem{H-S06}
C.~Hertling and {\relax Ch}.~Sevenheck, \emph{{Nilpotent orbits of a
  generalization of Hodge structures}}, J.~reine angew. Math. \textbf{609}
  (2007), 23--80, arXiv: \url{math.AG/0603564}.

\bibitem{K-K87}
M.~Kashiwara and T.~Kawai, \emph{{The Poincar\'e lemma for variations of
  polarized {Hodge} structure}}, Publ. RIMS, Kyoto Univ. \textbf{23} (1987),
  345--407.

\bibitem{Malgrange91}
B.~Malgrange, \emph{{\'E}quations diff\'erentielles {\`a} coefficients
  polynomiaux}, Progress in Math., vol.~96, Birkh{\"a}user, Basel, Boston,
  1991.

\bibitem{Mochizuki07}
T.~Mochizuki, \emph{{Asymptotic behaviour of tame harmonic bundles and an
  application to pure twistor $D$-modules}}, vol. 185, Mem. Amer. Math. Soc.,
  no. 869-870, American Mathematical Society, Providence, RI, 2007, arXiv:
  \url{math.DG/0312230} \& \url{math.DG/0402122}.

\bibitem{Mochizuki08}
\bysame, \emph{{\SortNoop{1}Wild harmonic bundles and wild pure twistor
  $D$-modules}}, arXiv: \url{0803.1344}, 2008.

\bibitem{Mochizuki08b}
\bysame, \emph{{\SortNoop{2}Asymptotic behaviour of variation of pure polarized
  TERP structures}}, arXiv: \url{0811.1384}, 2008.

\bibitem{Bibi00}
C.~Sabbah, \emph{{D\'eformations isomonodromiques et vari\'et\'es de
  Frobenius}}, Savoirs Actuels, CNRS~{\'E}ditions \& EDP~Sciences, Paris, 2002,
  English Transl.: Universitext, Springer \& EDP~Sciences, 2007.

\bibitem{Bibi04}
\bysame, \emph{{Fourier-Laplace transform of irreducible regular differential
  systems on the Riemann sphere}}, Russian Math. Surveys \textbf{59} (2004),
  no.~6, 1165--1180, erratum:
  \url{http://math.polytechnique.fr/~sabbah/sabbah_Fourier-irred_err.pdf}.

\bibitem{Bibi01c}
\bysame, \emph{{Polarizable twistor $\mathcal{D}$-modules}}, Ast{\'e}risque,
  vol. 300, Soci{\'e}t{\'e} Math{\'e}matique de France, Paris, 2005.

\bibitem{Bibi96bb}
\bysame, \emph{Hypergeometric periods for a tame polynomial}, Portugal. Math.
  \textbf{63} (2006), no.~2, 173--226, arXiv: \url{math.AG/9805077}.

\bibitem{Bibi05b}
\bysame, \emph{{Monodromy at infinity and Fourier~transform~II}}, Publ. RIMS,
  Kyoto Univ. \textbf{42} (2006), 803--835.

\bibitem{Bibi05}
\bysame, \emph{{Fourier-Laplace transform of a variation of polarized complex
  Hodge structure}}, J.~reine angew. Math. \textbf{621} (2008), 123--158,
  arXiv: \url{math.AG/0508551}.

\bibitem{Bibi06b}
\bysame, \emph{{Wild twistor $\mathcal D$-modules}}, {Algebraic Analysis and
  Around}, Advanced Studies in Pure Math., vol.~54, Math. Soc. Japan, Tokyo,
  2009, arXiv: \url{0803.0287}, pp.~293--353.

\bibitem{MSaito86}
M.~Saito, \emph{Modules de {Hodge} polarisables}, Publ. RIMS, Kyoto Univ.
  \textbf{24} (1988), 849--995.

\bibitem{MSaito89}
\bysame, \emph{{On the structure of Brieskorn lattices}}, Ann. Inst. Fourier
  (Grenoble) \textbf{39} (1989), 27--72.

\bibitem{S-S85}
J.~Scherk and {\relax J.H.M}.~Steenbrink, \emph{{On the mixed Hodge structure
  on the cohomology of the Milnor fiber}}, Math. Ann. \textbf{271} (1985),
  641--655.

\bibitem{Schmid73}
W.~Schmid, \emph{Variation of {Hodge} structure: the singularities of the
  period mapping}, Invent. Math. \textbf{22} (1973), 211--319.

\bibitem{Simpson90}
C.~Simpson, \emph{Harmonic bundles on noncompact curves}, J.~Amer. Math. Soc.
  \textbf{3} (1990), 713--770.

\bibitem{Simpson92}
\bysame, \emph{Higgs bundles and local systems}, Publ. Math. Inst. Hautes
  {\'E}tudes Sci. \textbf{75} (1992), 5--95.

\bibitem{Simpson97}
\bysame, \emph{Mixed twistor structures}, Pr\'epublication Universit\'e de
  Toulouse \& arXiv: \url{math.AG/9705006}, 1997.

\end{thebibliography}
\end{document}